\newcommand{\R}{\mathbb{R}}
\newcommand{\inv}{^{-1}}
\newcommand{\eps}{\varepsilon}
\newcommand{\del}{\nabla}
\newcommand{\lap}{\Delta}
\newcommand{\bd}{\partial}
\newcommand{\cl}{\overline}
\newcommand{\la}{\langle}
\newcommand{\ra}{\rangle}
\newcommand{\supp}{\operatorname{supp}}
\renewcommand{\div}{\operatorname{div}}
\newcommand{\grad}{\del}
\newcommand{\cp}{\operatorname{cap}}
\theoremstyle{plain}
\newtheorem{theorem}{Theorem}
\newtheorem{rethm}{Theorem} 
\newtheorem{prop}[theorem]{Proposition}
\newtheorem{lem}[theorem]{Lemma}
\newtheorem{conj}[theorem]{Conjecture}
\theoremstyle{definition}
\newtheorem{defn}[theorem]{Definition}
\newtheorem{rem}[theorem]{Remark}
\newtheorem{ex}[theorem]{Example}
\begin{document}
\title{Euclidean Domains with Nearly Maximal Yamabe Quotient} 
\author{Liam Mazurowski}
\author{Xuan Yao}

\begin{abstract}
     Let $\Omega$ be a smooth, bounded domain in $\R^3$ with connected boundary. It follows from work of Escobar that the Yamabe quotient of $\Omega$ is at most the Yamabe quotient of a  ball, and equality holds if and only if $\Omega$ is a ball. We show that if equality almost holds then the following things are true: (i)
      $\Omega$ is diffeomorphic to a ball;
      (ii) There is a small number $\eps > 0$ such that $B(x,r) \subset \Omega \subset B(x,r(1+\eps))$; (iii) After suitable scaling, $\Omega$ is Gromov-Hausdorff close to the unit ball when considered as a metric space with its induced length metric. 
      We also give a qualitative comparison between $Q$ and the coefficient of quasi-conformality studied in the theory of quasi-conformal maps. 
\end{abstract}

\maketitle



\section{Introduction} The Yamabe quotient $Q$ of a smooth manifold with boundary is a conformal invariant, studied by Escobar \cite{escobar1992yamabe} in the context of the Yamabe problem.  This invariant always satisfies $Q(M^n,[g]) \le Q(B^n,[g_{\text{euc}}])$, where $B$ is the unit ball in $\R^n$. Cherrier \cite{cherrier1984problemes} proved that if this inequality is strict, then $M$ admits a conformal metric with constant scalar curvature and minimal boundary. Escobar \cite{escobar1992yamabe} then verified that this inequality is indeed strict in many cases.  In particular, if $\Omega \subset \R^n$ is a smooth, bounded domain then 
\[
Q(\Omega) := Q(\Omega,[g_{\text{euc}}])
\] 
satisfies $Q(\Omega) \le Q(B)$ with equality if and only if $\Omega$ is a ball. 
Escobar \cite[Page 24]{escobar1992yamabe} asserted that one should ``think of $Q(\Omega)$ as measuring how far $\Omega$ is from a ball.''   This leads to a natural stability problem: if $Q(\Omega)$ is close to $Q(B)$ then in what sense is $\Omega$ close to a ball? 

In this paper, we investigate the stability problem for $Q$. Since the techniques we use ultimately depend on the Gauss-Bonnet theorem, we must restrict attention to domains in $\R^3$ with connected boundary. Our main results are as follows. 

\begin{theorem}
\label{main1}
For every $\eps > 0$ there is a $\delta > 0$ so that the following is true: whenever $\Omega\subset \R^3$ is a smooth, bounded domain with connected boundary and $Q(B) - Q(\Omega) < \delta$ then there exists a ball $B(x,r)$ such that $B(x,r) \subset \Omega \subset B(x,r(1+\eps))$.
\end{theorem}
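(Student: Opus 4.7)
My plan is to argue by contradiction and compactness, using the conformal M\"obius invariance of $Q$ to absorb any concentration of Yamabe extremals before invoking Escobar's rigidity in the limit. Suppose the conclusion fails: then there is $\eps_0 > 0$ and a sequence of smooth bounded domains $\Omega_k \subset \R^3$ with connected boundary such that $Q(\Omega_k) \to Q(B) = 2\sqrt{\pi}$, yet no ball $\eps_0$-sandwiches $\Omega_k$. By scale invariance of $Q$, normalize so that $\text{Area}(\bd\Omega_k) = 4\pi$. Since $Q(\Omega_k) < Q(B)$ strictly (by Escobar), Cherrier's existence theorem produces a positive Yamabe extremal $u_k$, harmonic in $\Omega_k$ and satisfying the critical Robin boundary equation $\bd_\nu u_k + \tfrac{1}{2} H u_k = Q(\Omega_k)\, u_k^3$ on $\bd\Omega_k$, normalized by $\int_{\bd\Omega_k} u_k^4\,d\sigma = 1$.

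The crux is a M\"obius normalization step. The M\"obius group of $\R^3 \cup \{\infty\}$ acts conformally on domains, preserves $Q$, and carries generalized balls to balls; on the ball itself, Yamabe extremals form a single orbit under this action, and degenerate orbit members concentrate at a boundary point. A concentration-compactness / profile decomposition analysis for the critical boundary Yamabe equation, in the spirit of Struwe and Brezis-Coron, should produce M\"obius transformations $\phi_k$ so that on $\tilde\Omega_k := \phi_k(\Omega_k)$ the transported extremals $\tilde u_k$ are uniformly bounded above and away from $0$ while $\tilde\Omega_k$ remains uniformly bounded --- absorbing all non-compactness of $u_k$ into $\phi_k$.

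With no concentration, Schauder estimates for the critical Robin problem together with interior harmonicity give uniform $C^{2,\alpha}$ bounds on $\tilde u_k$. Extracting a subsequence, $\tilde\Omega_k \to \tilde\Omega_\infty$ in Hausdorff distance and $\tilde u_k \to \tilde u_\infty$ in $C^{2,\alpha}_{\mathrm{loc}}$, with $\tilde u_\infty$ a Yamabe extremal on $\tilde\Omega_\infty$ and $Q(\tilde\Omega_\infty) = Q(B)$. Escobar's rigidity then forces $\tilde\Omega_\infty$ to be a round ball --- this is where the connectedness hypothesis enters, via Gauss-Bonnet on the topological sphere $\bd\tilde\Omega_\infty$ enabling the Alexandrov-type step in the rigidity. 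Consequently $\tilde\Omega_k$ is $(\eps_0/2)$-ball-sandwiched for large $k$, and pulling the sandwich back by $\phi_k^{-1}$ --- a M\"obius transformation mapping balls to balls, arranged so that $\phi_k^{-1}(\infty)$ stays uniformly far from the sandwich ball --- yields an $\eps_0$-ball-sandwich of $\Omega_k$, contradicting the hypothesis.

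The main expected obstacle is the M\"obius normalization: one must show that all non-compactness of near-optimal Yamabe extremals is captured by the action of a single M\"obius transformation, and that the compensating transformation can be chosen to keep $\tilde\Omega_k$ uniformly bounded. This calls for a careful blow-up and profile decomposition for the critical boundary Yamabe equation tailored to near-optimal Euclidean domains in $\R^3$; the distinction between M\"obius "null directions" (to be absorbed into $\phi_k$) and genuine geometric deviations of $\Omega_k$ (which should force $Q(\Omega_k)$ strictly below $Q(B)$) is the key technical point.
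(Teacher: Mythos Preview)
Your compactness-and-rigidity strategy has a fundamental gap: you have no a priori control on the geometry of $\partial\Omega_k$. The hypothesis $Q(\Omega_k)\to Q(B)$ gives no bound whatsoever on the mean curvature, second fundamental form, or $C^{1,\alpha}$ regularity of the boundary. Yet every step after your M\"obius normalization requires such bounds: Schauder estimates for the Robin problem need uniform $C^{2,\alpha}$ control on $\partial\tilde\Omega_k$; Hausdorff convergence of domains does not yield a smooth limit (or even a domain with connected boundary); and the Robin boundary condition $\partial_\nu u_k + \tfrac{1}{2}H u_k = Q(\Omega_k)u_k^3$ explicitly contains the uncontrolled quantity $H$. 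The paper makes precisely this point when motivating its alternative approach: since $\partial\Omega$ can be arbitrarily wild, it is very difficult to control a function satisfying a Neumann-type condition involving $H$. This is why the paper replaces the Yamabe extremal by the capacitary potential (a Dirichlet problem, with no $H$ in the boundary data), extracts from level-set monotonicity formulas a single level set $\Sigma$ with small $\int_\Sigma\|\mathring A\|^2$, applies De Lellis--M\"uller to see that $\Sigma$ is nearly a round sphere, and then uses a capacity comparison to pin down $\partial\Omega$ relative to that sphere.

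There is a second, independent gap in your final step. Even granting that $\tilde\Omega_k$ is $(\eps_0/2)$-sandwiched between concentric balls, pulling back by $\phi_k^{-1}$ does not produce a concentric sandwich of $\Omega_k$: M\"obius transformations send spheres to spheres but do not preserve concentricity, and the distortion can be arbitrarily large depending on where $\phi_k^{-1}$ sends $\infty$. The paper confronts exactly this difficulty---its dumbbell example shows that a badly chosen center of inversion can ``hide'' the features distinguishing $\Omega$ from a ball---and resolves it by choosing the inversion point via the medial axis of $\Omega$ (specifically, the center of a smallest ball in $\mathcal A(\Omega)$), together with a flow argument of Lieutier to rule out the existence of any smaller such ball. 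Your abstract concentration-compactness choice of $\phi_k$ gives no comparable control.
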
 

\begin{theorem} 
\label{main2}
For every $\eps > 0$ there is a $\delta > 0$ so that the following is true: whenever $\Omega\subset \R^3$ is a smooth, bounded domain with connected boundary and $Q(B) - Q(\Omega) < \delta$ then, after suitable scaling of $\Omega$, the Gromov-Hausdorff distance between the unit ball and $\Omega$ equipped with its induced length metric is at most $\eps$. 
\end{theorem}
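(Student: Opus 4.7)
The plan is to bootstrap from Theorem \ref{main1} by extracting additional quantitative control on the boundary geometry of $\Omega$, then using this to rule out long thin tentacles of $\Omega$ winding inside the shell produced by \ref{main1}. Fix $\eps > 0$, choose $\delta$ so that Theorem \ref{main1} applies with parameter $\eps_1 \ll \eps$, and after translating and rescaling assume $B := B(0,1) \subset \Omega \subset B(0, 1+\eps_1)$. Because $B$ is convex, any two points $p, q \in B$ are joined by the segment $\overline{pq} \subset B \subset \Omega$, giving $d_\Omega(p, q) = |p - q|$, so the inclusion $B \hookrightarrow (\Omega, d_\Omega)$ is an isometric embedding. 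It therefore suffices to show that $B$ is $\eps$-dense in $(\Omega, d_\Omega)$: that for every $q \in \Omega$ there exists $p \in B$ with $d_\Omega(q, p) < \eps$. The only way density can fail is if $\Omega$ contains a long, thin, winding finger lying in the shell $B(0, 1+\eps_1) \setminus B$ whose tip is Euclidean-close but intrinsically far from $B$.

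To rule this out, I would aim for a quantitative area bound $\area(\partial \Omega) \le 4\pi(1+\eps_2)$ with $\eps_2 \to 0$ as $\delta \to 0$. Testing the Yamabe quotient with $u \equiv 1$ yields
\[
Q(\Omega) \cdot \sqrt{\area(\partial \Omega)} \le 2 \int_{\partial \Omega} H \, dA,
\]
and combining with Cauchy--Schwarz $(\int H)^2 \le \int H^2 \cdot \area(\partial \Omega)$ and the hypothesis that $Q(\Omega)$ is close to the sharp constant $Q(B)$ pinches the Willmore energy $\int_{\partial \Omega} H^2$ against its universal lower bound, which is attained only at round spheres. Combining this pinching with the sandwich of Theorem \ref{main1}, which pins any comparison sphere to have radius close to $1$, a stability argument for the Willmore minimum should yield the desired area bound. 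Once area is close to $4\pi$, and since $\partial \Omega$ is a topological sphere by Theorem \ref{main1}, the radial projection $\pi(y) = y/|y|$ from $\partial \Omega$ onto $S^2 = \partial B$ has degree one; the area formula
\[
\int_{S^2} \#\pi^{-1}(\theta) \, d\theta \le \area(\partial \Omega) \le 4\pi(1+\eps_2)
\]
then shows that the set $E \subset S^2$ of directions on which the radial ray meets $\partial \Omega$ more than once has spherical area at most $4\pi \eps_2$, and for $\theta \notin E$, the full radial segment between $\partial B$ and $\partial \Omega$ lies in $\Omega$.

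To finish: given $q \in \Omega \setminus B$ with $q/|q| \notin E$, the radial segment from $q$ to $q/|q| \in \partial B$ sits in $\Omega$ and has length at most $\eps_1$; for $q/|q| \in E$, since $E$ has small spherical area, every metric ball in $S^2$ of radius $c\sqrt{\eps_2}$ meets $S^2 \setminus E$, so one can deflect tangentially within $\Omega$ by angular distance $O(\sqrt{\eps_2})$ to a point lying over $S^2 \setminus E$ and then descend radially, producing a path in $\Omega$ of total length $O(\eps_1 + \sqrt{\eps_2}) < \eps$. The main obstacle is the area estimate above: the test-function argument with $u \equiv 1$ only produces a lower bound on the Willmore energy of $\partial \Omega$, so to pin it close to the Willmore minimum one must combine this with the sandwich from Theorem \ref{main1} and a stability statement for Willmore surfaces sitting in a thin annular shell. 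I expect this bridging step to be the heart of the proof, potentially requiring a compactness-contradiction argument or a finer analysis of the Yamabe minimizer on $\Omega$ and the constant mean curvature structure it induces on $\partial \Omega$ in the associated conformal metric.
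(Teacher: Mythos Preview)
Your reduction to showing that $B$ is $\eps$-dense in $(\Omega,d_\Omega)$ is correct, and this is exactly how the paper proceeds. But the mechanism you propose for establishing density has two genuine gaps.

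First, the area bound cannot be obtained as you outline. Plugging $u\equiv 1$ into the Yamabe functional for $\Omega$ yields (with the paper's normalization) $Q(\Omega)\le \tfrac14\bigl(\int_{\partial\Omega}H\bigr)\big/\vol(\Omega)^{1/3}$, and for the Sobolev functional it yields the version with $\sqrt{\area(\partial\Omega)}$ in the denominator. In either case you only obtain a \emph{lower} bound on $\int_{\partial\Omega}H$ and hence, via Cauchy--Schwarz, a lower bound on the Willmore energy. This is the wrong direction: Willmore's inequality $\int_{\partial\Omega}H^2\ge 16\pi$ is already a lower bound, so nothing is being pinched. You acknowledge this, but the proposed fix---``Willmore stability for surfaces in a thin annular shell''---is not a known result and is false without further input: a topological sphere sitting in a thin spherical shell and enclosing the inner ball can still have arbitrarily large area (think of a sphere with a long thin finger wound many times around the shell).

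Second, and more seriously, even granting the area bound the deflection step fails. Suppose $\Omega$ is the unit ball with a tube of radius $r$ and length $L$ attached and wound inside the shell. Its contribution to $\area(\partial\Omega)$ is of order $rL$, which can be made as small as you like with $L$ fixed by taking $r\to 0$, so the area bound and the smallness of your bad set $E\subset S^2$ are both satisfied. But a point $q$ near the tip of the tube is surrounded in $\Omega$ only by tube; any tangential motion of more than $r$ exits $\Omega$, so you cannot ``deflect within $\Omega$ to a direction outside $E$.'' The only path in $\Omega$ from $q$ back to $B$ runs along the tube and has length of order $L$. Thus area control alone cannot bound intrinsic distances.

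The paper's proof avoids all of this by not treating Theorem~\ref{main1} as a black box. It re-uses the medial-axis flow $\mathfrak C$ introduced in that proof: after normalizing so that $B(0,1)$ is a ball of \emph{minimal} radius in $\mathcal A(\Omega)$, Case~1 of the proof of Theorem~\ref{main1} forces $\mathcal R(x)<\eps/2$ for every $x\in\Omega\setminus B(0,1)$, and then the trajectory $t\mapsto\mathfrak C(t,x)$ must enter $B(0,1)$ within time $\eps$---otherwise the identity $\mathcal R(\mathfrak C(t,x))=\mathcal R(x)+\int_0^t\|\nabla(\mathfrak C(\tau,x))\|^2\,d\tau$ would force $\|\nabla\|^2\le\tfrac12$ somewhere along the trajectory, producing a ball in $\mathcal A(\Omega)$ of radius strictly less than $1$. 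Since $\|\nabla\|\le 1$, this trajectory is the required short path in $\Omega$.
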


\begin{theorem}
\label{main3}
    There is a $\delta > 0$ so that if $\Omega\subset \R^3$ is a smooth, bounded domain with connected boundary and $Q(B) - Q(\Omega) < \delta$ then $\Omega$ is diffeomorphic to a ball. 
\end{theorem}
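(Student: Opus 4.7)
\textbf{Proof plan for Theorem~\ref{main3}.}

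The strategy is to show that $\bd\Omega$ is a topological $2$-sphere; the smooth Schoenflies theorem in $\R^3$ then gives that $\Omega$ is diffeomorphic to a ball. Since $\bd\Omega$ is smooth, connected, closed and orientable, Gauss--Bonnet gives $\int_{\bd\Omega} K\,dA = 4\pi(1-g)$, so the task reduces to showing $\int_{\bd\Omega} K\,dA > 0$ whenever $\delta$ is sufficiently small.

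I would first apply Theorem~\ref{main1} to normalize, after translation and rescaling, to $B(0,1)\subset \Omega \subset B(0,1+\eps)$ with $\eps = \eps(\delta)\to 0$. Plugging the constant test function $u\equiv 1$ into Escobar's boundary Yamabe functional yields an inequality of the form $Q(\Omega) \leq c\,\int_{\bd\Omega} H\,dA/|\bd\Omega|^{1/2}$, with equality on the unit ball (where $u\equiv 1$ is a minimizer); combining with Cauchy--Schwarz then gives
\[
\int_{\bd\Omega} H^2\,dA \;\geq\; c^{-2}Q(\Omega)^2 \;\geq\; 16\pi - O(\delta),
\]
where $16\pi$ is the Willmore energy of the unit sphere. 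Together with the Gauss equation in $\R^3$, $K = \tfrac14 H^2 - \tfrac12|\mathring A|^2$, where $\mathring A := A - \tfrac12 Hg$ is the trace-free second fundamental form of $\bd\Omega$, integration produces
\[
\int_{\bd\Omega} K\,dA \;\geq\; 4\pi - O(\delta) - \tfrac12\int_{\bd\Omega}|\mathring A|^2\,dA.
\]

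The remaining task, and the main obstacle, is to establish the $L^2$ near-umbilicity estimate $\int_{\bd\Omega} |\mathring A|^2\,dA < 8\pi - O(\delta)$. The threshold is sharp: combining the above lower bound on $\int H^2$ with the Gauss equation and Gauss--Bonnet, any surface of genus $g\geq 1$ satisfying $Q(\Omega) > Q(B) - \delta$ automatically has $\int |\mathring A|^2 \geq 8\pi g - O(\delta)$, so the matter reduces to beating this genus-$1$ bound by a strict margin. I expect the required near-umbilicity to be extractable from the quantitative rigidity that already underlies Theorems~\ref{main1} and \ref{main2}: a near-minimizer of Escobar's functional approximately solves the boundary Yamabe equation, and combined with the shell containment of Theorem~\ref{main1}, elliptic regularity should force the boundary second fundamental form to be near-umbilic in $L^2$. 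A compactness-by-contradiction alternative would extract a sequence $\Omega_k$ with $Q(\Omega_k)\to Q(B)$ of non-ball topology, normalize via Theorem~\ref{main1} so that $\bd\Omega_k$ sits in a shell of thickness tending to zero, and apply the De~Lellis--M\"uller near-umbilical rigidity to conclude that $\bd\Omega_k$ is $W^{2,2}$-close to a round sphere (in particular diffeomorphic to $S^2$) for large $k$, contradicting the hypothesis. With the near-umbilicity in hand, $\int_{\bd\Omega} K\,dA > 0$ forces $g = 0$, so $\bd\Omega$ is a smooth embedded $2$-sphere, and the smooth Schoenflies theorem in $\R^3$ concludes.
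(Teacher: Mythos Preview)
Your proposal has a genuine gap at precisely the step you flag as ``the main obstacle'': the near-umbilicity estimate $\int_{\partial\Omega}|\mathring A|^2 < 8\pi$. Neither of the mechanisms you suggest can produce it.

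First, the shell containment $B(0,1)\subset\Omega\subset B(0,1+\eps)$ from Theorem~\ref{main1} carries \emph{no} control whatsoever on the curvature of $\partial\Omega$. The boundary can be arbitrarily wrinkled inside the thin shell, so $|\partial\Omega|$, $\int H^2$, and $\int|\mathring A|^2$ are all a priori unbounded. (Incidentally, this also breaks your Cauchy--Schwarz step: plugging $u\equiv 1$ into $Q_g$ gives $Q(\Omega)\le \tfrac14\int_{\partial\Omega}H / |\Omega|^{1/3}$, and Cauchy--Schwarz then yields $\int H^2 \ge 16 Q(\Omega)^2|\Omega|^{2/3}/|\partial\Omega|$, which is useless once $|\partial\Omega|$ is uncontrolled. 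Of course the Willmore inequality gives $\int H^2\ge 16\pi$ for free, but that is not the issue.) Second, De Lellis--M\"uller requires small $\|\mathring A\|_{L^2}$ as a \emph{hypothesis}; it cannot be invoked to obtain it. Third, elliptic regularity for a near-minimizer of the Escobar functional does not help: the boundary condition $\partial_\nu u + \tfrac14 H u = 0$ already involves $H$, and $\partial\Omega$ is fixed data, so no bootstrapping recovers bounds on its second fundamental form. In fact, the level-set estimate underlying Theorems~\ref{main1} and \ref{main2} (Proposition~\ref{level-set-estimate}) controls $\int_{\Sigma_\tau}|\mathring A|^2$ only with a weight $b(\tau)$ that \emph{vanishes} at $\tau=0$, i.e.\ at $\partial\Omega$ itself; this is not accidental.

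The paper's proof avoids curvature of $\partial\Omega$ entirely. After normalizing so that $B(0,1)$ is a ball of minimal radius in $\mathcal A(\Omega)$ and $B(0,1)\subset\Omega\subset B(0,1+\eps)$, the proof of Theorem~\ref{main1} already shows that every $x\in\Omega\setminus B(0,1)$ has $\mathcal R(x)\le \eps/2$ and that $\|\nabla(x)\|^2\ge \tfrac12$ on $\Omega\setminus B(0,\tfrac12)$ (otherwise one would find a smaller ball in $\mathcal A(\Omega)$). One then runs Lieutier's flow $\mathfrak C$ for a fixed time to push all of $\Omega$ into $B(0,1)$, which exhibits a homotopy of $\Omega$ into the ball and hence shows $\Omega$ is contractible. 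The half-lives/half-dies lemma then forces $\partial\Omega\cong S^2$, and Schoenflies concludes. The point is that the topological conclusion is obtained from the medial-axis structure established en route to Theorems~\ref{main1} and \ref{main2}, not from any integral-curvature estimate on $\partial\Omega$.
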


\begin{rem}
Theorems \ref{main1}-\ref{main3} all remain true when the Yamabe quotient $Q(\Omega)$ is replaced by the Sobolev quotient $Q(\Omega,\bd \Omega)$ studied by Escobar in \cite{escobar1992conformal}. The proofs are almost identical. 
\end{rem} 

\begin{rem}
    Our methods show that in Theorems \ref{main1} and \ref{main2}, it suffices to take $\delta = O(\eps^9)$ as $\eps\to 0$. The constant in the big $O$ depends on the optimal constant in the De Lellis-M\"uller almost-umbilic estimate \cite{de2005optimal} and is therefore not explicit.
\end{rem}

\subsection{Motivation} 
The Riemann mapping theorem implies that conformal maps between domains in $\R^2$ exist in abundance.  In particular, every simply connected open set in $\R^2$ which is not the entire plane is conformal to the unit ball. On the other hand, conformal maps between domains in $\R^n$ are very sparse.  
Indeed, Liouville's theorem states that every conformal map defined on a domain $\Omega\subset \R^n$ is the restriction of a M\"obius transformation. Therefore, the only open subsets of $\R^n$ conformal to the unit ball are balls and half-spaces. 

 In $\R^n$, the class of quasi-conformal maps exhibits greater flexibility, more akin to the picture in two dimensions. Informally, a quasi-conformal map $f$ is a homeomorphism that sends infinitesimal circles to infinitesimal ellipses with eccentricity bounded by some constant $K < \infty$.  The smallest such constant $K$ is called the dilation of $f$ and is denoted by $K(f)$.  One always has $K(f)\ge 1$ and equality holds if and only if $f$ is conformal.  
 

Given a domain $\Omega\subset \R^n$, the {\it coefficient of quasi-conformality} is a conformal invariant $K(\Omega)$ which measures how far $\Omega$ is from being conformal to a ball.  This invariant was introduced by Gehring \cite{gehring1962rings}. 
More formally, the coefficient of quasi-confromality is defined by
\[
K(\Omega) =  \inf\{K(f): f\colon B\to \Omega \text{ is quasi-conformal}\},
\]
where by convention $K(\Omega) = \infty$ if there is no quasi-conformal map from $B$ to $\Omega$. In particular $K(\Omega) = \infty$ if $\Omega$ is not homeomorphic to a ball.  When $K(\Omega) < \infty$, it turns out \cite{gehring1962rings} that there is always an optimal quasi-conformal map $f \colon B\to \Omega$ whose dilation satisfies $K(f) = K(\Omega)$.  Thus $K(\Omega) \ge 1$ with equality if and only if $\Omega$ is conformal to a ball.  
This leads to a natural stability problem: if $K(\Omega)$ is close to 1 then in what sense is $\Omega$ close to a ball? 
Gehring and V\"ais\"al\"a \cite{gehring1965coefficients} have studied the coefficient of quasi-conformality of domains $\Omega\subset \R^3$ in great detail. While they do not give a complete characterization of domains for which $K(\Omega)$ is close to 1, their work shows that certain geometric features can prevent $K(\Omega)$ from being close to 1.

Given that the conformal invariants $K(\Omega)$ and $Q(\Omega)$ both measure in a certain sense how far $\Omega$ is from a ball, it is natural to seek comparisons between these two invariants.  Is one stronger than the other? Or do they detect essentially the same features of the domain $\Omega$? Along these lines, Escobar \cite[Page 24]{escobar1992yamabe} proposed to ``study the relation between $Q(\Omega)$ and the dilation quotients defined in the study of quasi-conformal maps.'' Our results allow for a qualitative comparison between $K$ and $Q$ in the regime where $K$ is close to 1 and $Q$ is close to $Q(B)$. We will show that certain geometric features which force $K(\Omega)$ to be far from 1 also force $Q(\Omega)$ to be far from $Q(B)$. 

\begin{rem}
From the point of view of comparing $K$ and $Q$, it is reasonable to require a priori that $\Omega$ has connected boundary (or even that $\Omega$ is diffeomorphic to a ball). Indeed, every quasi-conformal map is required by definition to be a homeomorphism.
\end{rem}

\subsection{Further Problems}

Given a smooth, compact manifold $M^3$ with boundary, the relative $\sigma$-constant of $M$ is defined by 
\[
\sigma(M) = \sup_g Q(M,[g]),
\]
where the supremum is taken over all smooth metrics $g$ on $M$. This is an invariant of the smooth manifold with boundary. One always has $\sigma(M) \le \sigma(B)$ where $B$ is a ball. 

Handlebodies are the simplest compact 3-manifolds with boundary.  Schwartz's connect sum inequality for the Yamabe quotient of a manifold with boundary \cite{schwartz2009monotonicity} implies that $\sigma(H) = \sigma(B)$ for all handlebodies $H$. Thus $\sigma$ cannot distinguish between a ball and a handlebody. On the other hand, if $M^3$ admits a smooth embedding into $\R^3$, then we can define a modified invariant 
\[
\sigma_{\text{euc}}(M) = \sup_{\Omega} Q(\Omega),
\]
where the supremum is taken over all smooth, bounded domains $\Omega\subset \R^3$ whose closure is diffeomorphic to $M$. This modified invariant is defined for all handlebodies, and Theorem \ref{main3} implies that $\sigma_{\text{euc}}(H) < \sigma_{\text{euc}}(B)$ whenever $H$ is a handlebody with at least one handle.

\begin{conj}
Let $T$ be a solid torus, i.e., a handlebody with one handle. We conjecture that $\sigma_{\operatorname{euc}}(T)$ is attained by the  region $C$ enclosed by the image of a Clifford torus under stereographic projection. 
\end{conj}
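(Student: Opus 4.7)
The plan is to prove the conjecture in three stages --- existence of an extremal domain, characterization via an Euler-Lagrange system, and identification with $C$ --- with the first stage carrying most of the weight. For existence, take a maximizing sequence $\Omega_n$ of smooth bounded solid tori with $Q(\Omega_n)\to\sigma_{\operatorname{euc}}(T)$. Since $Q$ is M\"obius-invariant, normalize using the conformal group of $\R^3$ so that, for instance, the inradius of each $\Omega_n$ equals $1$ and a suitable centroid sits at the origin. Theorem \ref{main3} supplies a gap $\sigma_{\operatorname{euc}}(T)\le Q(B)-\delta$, keeping the sequence uniformly bounded away from the ball. The main task is to exclude three potential degenerations: (i) a thin handle pinching off, leaving a ball behind; (ii) collapse of the solid torus to a curve; and (iii) part of $\partial\Omega_n$ escaping to infinity. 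Degeneration (i) is the phenomenon behind Schwartz's equality $\sigma(T)=\sigma(B)$ for abstract handlebodies, and I would attempt to rule it out via capacity estimates showing that an embedded thin neck in $\R^3$ forces a quantitative loss in $Q$. Degenerations (ii) and (iii) should follow from the normalization together with the Willmore- and Gauss-Bonnet-type control of $\partial\Omega_n$ that the paper's techniques already deliver from a lower bound on $Q$.

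Assuming one extracts a smooth extremum $\Omega^*$, the second stage is to write down its Euler-Lagrange system. The optimal function $u>0$ for $Q(\Omega^*)$ satisfies $\Delta u = 0$ in $\Omega^*$ and the Escobar boundary condition $\partial_\nu u + \tfrac14\, H u = c\, u^3$ on $\partial\Omega^*$, where $H$ is the mean curvature and $c$ is determined by the normalization of $u$. Requiring stationarity under shape variations in addition to variations of $u$ should produce an additional overdetermined boundary condition of Serrin type, schematically $|\nabla u|^2 = F(u, H, \text{II})$ for some explicit $F$ built from $u$ and the extrinsic geometry of $\partial\Omega^*$. This packages the extremality of $\Omega^*$ as a free-boundary problem attached to the conformal Laplacian.

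The third stage is identification with $C$. The key point is that $C$ is conformally equivalent, via stereographic projection, to a round solid torus in $S^3$ bounded by a Clifford torus; on that model the extremal $u$ is constant, the boundary is minimal, and the overdetermined condition holds automatically. I would then try to adapt Serrin's moving-planes method, in a form suitable for the conformal Laplacian with Escobar-type boundary condition, to show that $\Omega^*$ admits a large symmetry group, so that after a further M\"obius transformation $\partial\Omega^*$ is a torus of revolution. A uniqueness result for constant-mean-curvature tori of revolution in $S^3$, combined with Brendle's resolution of the Lawson conjecture (which singles out the Clifford torus among embedded minimal tori in $S^3$), should then pin down $\Omega^* = C$ up to conformal equivalence.

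The main obstacle is clearly stage one. The situation is in a precise sense the reverse of Theorems \ref{main1}--\ref{main3}: those results show quantitatively that domains with $Q(\Omega)$ close to $Q(B)$ are geometrically close to a ball, whereas here one would like to upgrade this into uniform compactness for the shape variational problem on embedded solid tori, together with a quantitative lower bound on the cost of a thin embedded handle. Stages two and three are, by comparison, relatively mechanical once a genuine smooth extremum is in hand.
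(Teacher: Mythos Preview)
The statement you are attempting to prove is labeled a \emph{Conjecture} in the paper, and the paper does not offer any proof of it; in fact the authors immediately remark that even the value $Q(C)$ is apparently unknown. So there is no paper proof to compare your proposal against.

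That said, your stage~3 contains a concrete error that the paper itself highlights. You claim that on the Clifford model ``the extremal $u$ is constant, the boundary is minimal, and the overdetermined condition holds automatically.'' But the paper explicitly states that $Q(C)$ is \emph{not} achieved by the round metric. A constant $u$ would mean the Euclidean metric on $C$ (equivalently, after stereographic projection, the round metric on the solid torus in $S^3$) realizes $Q(C)$, and the authors say this is false. So the very identification you plan to verify at the end fails: $C$ with a constant test function does \emph{not} satisfy the optimality conditions, and your symmetry/uniqueness argument, which hinges on reducing to a constant extremizer on a round model, collapses. Relatedly, your stage~2 Euler--Lagrange equation is wrong: the interior equation for the Yamabe minimizer on a Euclidean domain is $-8\Delta u = c\,u^5$, not $\Delta u = 0$.

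Stage~1 is also not under control. Theorem~\ref{main3} gives the gap $\sigma_{\operatorname{euc}}(T) < Q(B)$, but that in no way prevents a maximizing sequence from degenerating via a pinching handle; it only says the limiting value stays below $Q(B)$. Your proposed ``capacity estimates showing that an embedded thin neck forces a quantitative loss in $Q$'' is precisely the open problem, not a tool you can invoke. The paper's dumbbell example illustrates exactly how subtle this is: ruling out neck-pinching for $Q$ required a careful choice of inversion center and the full medial-axis machinery, and even then only for domains with $Q$ near $Q(B)$, not for arbitrary solid tori.
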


We remark that already the value $Q(C)$ does not seem to be known. The Yamabe quotient $Q(C)$ is {\it not} achieved by the round metric. 

\subsection{The Yamabe Quotient} 

The Yamabe problem asks whether every closed Riemannian manifold $(M^n,g)$ admits a conformal metric with constant scalar curvature. Yamabe \cite{yamabe1960deformation} showed that it is equivalent to find a minimizer of the functional 
\[
Q_g(f) =  \frac{\int_M  \vert \grad f\vert^2 + \frac{n-2}{4(n-1)} Rf^2\, dv}{\left(\int_M f^{\frac{2n}{n-2}}\, dv\right)^{\frac{n-2}{2n}}},
\]
and then argued that such a minimizer must always exist. 
However, Trudinger \cite{trudinger1968remarks} later discovered a gap in Yamabe's proof. Trudinger was able to repair the gap when the conformal invariant
\[
Q(M,[g]) = \inf_{f} Q_g(f)
\]
satisfies $Q(M,[g]) \le 0$. Aubin \cite{aubin1976equations} proved it is actually enough to have $Q(M,[g]) < Q(S^n,[g_{\text{round}}])$, and verified that this inequality holds in many situations. Finally, Schoen \cite{schoen1984conformal} proved the strict inequality in all remaining cases, leading to a complete resolution of the Yamabe problem. 

There is a natural variant of the Yamabe problem on manifolds with boundary. Given a smooth Riemannian manifold $(M,g)$ with boundary, one seeks a conformal metric with constant scalar curvature in the interior and zero mean curvature on the boundary. In this case, the associated functional is given by
\[
Q_g(f) = \frac{\int_M  \vert \grad f\vert^2 + \frac{n-2}{4(n-1)} Rf^2\, dv +  \int_{\bd M} \frac{n-2}{2(n-1)} H u^2\, da}{\left(\int_M f^{\frac{2n}{n-2}}\, dv\right)^{\frac{n-2}{2n}}}.
\]
One again defines the Yamabe quotient 
\[
Q(M,[g]) = \inf_f Q_g(f),
\]
which is an invariant of the conformal class. Cherrier \cite{cherrier1984problemes} showed that if $Q(M,[g]) < Q(B,[g_{\text{euc}}])$ then there exists a conformal metric with constant scalar curvature and minimal boundary. Escobar \cite{escobar1992yamabe} verified the inequality $Q(M,[g]) < Q(B,[g_{\text{euc}}])$ in many cases. In particular, given a smooth bounded domain $\Omega\subset \R^n$, let $Q(\Omega) = Q(\Omega,[g_{\text{euc}}])$. Then Escobar's work implies that 
\begin{equation}
\label{esc1} 
Q(\Omega)\le Q(B)
\end{equation}
with equality if and only if $\Omega$ is a ball. 

In fact, on a manifold with boundary, one can also look for a conformal metric with zero scalar curvature in the interior and constant mean curvature on the boundary. The basic strategy is very similar, except that one replaces $Q_g$ by the functional 
\[
Q_g^{\bd}(f)  = \frac{\int_M  \vert \grad f\vert^2 + \frac{n-2}{4(n-1)} Rf^2\, dv +  \int_{\bd M} \frac{n-2}{2(n-1)} H u^2\, da}{\left(\int_{\bd M} f^{\frac{2(n-1)}{n-2}}\, dv\right)^{\frac{n-2}{2(n-1)}}}.
\]
Again the problem is solvable provided the conformal invariant 
\[
Q(M,\bd M,[g]) = \inf_f Q_g^{\bd}(f)
\]
satisfies $Q(M,\bd M,[g]) < Q(B,\bd B,[g_{\text{euc}}])$. Escobar \cite{escobar1992conformal}  showed that this strict inequality holds in many cases. In particular, given a smooth bounded domain $\Omega\subset \R^n$, let $Q(\Omega,\bd \Omega) = Q(\Omega,\bd \Omega, [g_{\text{euc}}])$. Then Escobar's work implies that 
\begin{equation}
\label{esc2}
Q(\Omega,\bd \Omega)\le Q(B,\bd B)
\end{equation} 
with equality if and only if $\Omega$ is a ball.

\subsection{Level Set Methods} 
Escobar's proof of rigidity in inequality (\ref{esc1}) depends on a positive mass type theorem for manifolds with boundary. Recently,  new monotonicity formulas for harmonic functions have been discovered, and these can be used to give new proofs of the positive mass theorem. Moreover, these harmonic function techniques are well-suited to proving stability theorems. In this subsection, we briefly survey these developments. 

In non-negative Ricci curvature, Colding \cite{colding2012new} discovered that the Bochner formula implies the monotonicity of several quantities along the level sets of a harmonic function. As an application, Colding deduced a sharp gradient estimate for the Green's function on a manifold with non-negative Ricci curvature.  See the references \cite{agostiniani2020sharp,agostiniani2020monotonicity,benatti2021minkowski,colding2014ricci} for further applications of the level set method for harmonic functions in Euclidean space and manifolds with non-negative Ricci curvature. 

Stern \cite{stern2022scalar} discovered a rearrangement of the Bochner formula, enabling the study of scalar curvature via the level sets of harmonic functions; also see the work of Kijowski \cite{kijowski1984unconstrained} and Jezierski-Kijowski \cite{jezierski2005unconstrained}.  Bray-Kazaras-Khuri-Stern \cite{bray2022harmonic} used Stern's method to give a new proof of the positive mass theorem. Since then, many monotone quantities have been discovered along the level sets of harmonic functions in manifolds with non-negative scalar curvature. These have been applied to study the Green's function on manifolds with non-negative scalar curvature \cite{munteanu2021comparison}, give new proofs of the positive mass theorem \cite{agostiniani2024green, miao2023mass}, study the relationship between mass and capacity \cite{miao2023mass,oronzio2022adm}, and re-compute the Yamabe invariant of $\mathbb{RP}^3$ \cite{mazurowski2023yamabe}.  See also \cite{agostiniani2022riemannian,chan2024monotonicity,hirsch2024monotone,mazurowski2023monotone,mazurowski2024mass, xia2024new} for generalizations to $p$-harmonic functions. 

Harmonic function techniques have proven especially useful for studying stability theorems.  For example, the positive mass theorem \cite{schoen1979proof,witten1981new} says that any asymptotically flat manifold with non-negative scalar curvature has non-negative ADM mass. Moreover, if the mass is zero then the manifold must be Euclidean space. The stability problem for the positive mass theorem asks whether an asymptotically flat manifold with very small mass must be close to Euclidean space in some sense. 
Dong \cite{dong2024some} and Kazaras-Khuri-Lee \cite{kazaras2021stability} were able to prove partial results in this direction under certain extra assumptions.  Later, Dong and Song \cite{dong2023stability} proved a definitive stability theorem for the positive mass theorem, confirming a conjecture of Huisken and Ilmanen \cite{huisken2001inverse}. All of these results depend crucially on \cite{bray2022harmonic}. Harmonic functions have also been very useful for studying the stability of other inequalities in scalar curvature. For example, Dong \cite{dong2024stability} has proved a stability theorem for the Penrose inequality.   Likewise, Allen-Bryden-Kazaras \cite{allen2023stability} have proven a stability result for Llahrul's theorem in dimension 3; also see Hirsch-Zhang \cite{hirsch2024stability} for a higher dimensional version. 
Finally, we mention that the authors \cite{mazurowski2024stability} have used harmonic functions to study the stability of the Yamabe invariant of $S^3$.

\subsection{Sketch of Proof} 
In this subsection, we sketch the proof of Theorem \ref{main1}. The proof has two major steps. 
In the first step, we prove a preliminary stability result for the Yamabe quotient of {\it exterior} domains.  Then in the second step, we show that a careful application of the preliminary stability result leads to Theorem \ref{main1}. The proofs of Theorems \ref{main2} and  \ref{main3} are very similar.

{\bf Step 1:} It will be convenient to consider the Yamabe quotient of exterior domains. More precisely, given a smooth, bounded domain $\Omega \subset \R^3$, define 
\[
Q^*(\Omega) =  \inf_f\frac{\int_{\R^3\setminus \Omega} \vert \grad f\vert^2 - \frac{1}{4}\int_{\bd \Omega} Hf^2\, da}{\left(\int_{\R^3\setminus \Omega} f^6\, dv\right)^{1/3}}. 
\]
Here our convention is that $H = 2$ when $\Omega$ is the unit ball, and the infimum is taken over all functions $f\in W^{1,2}(\R^3\setminus \Omega)$. 
Let  $\Omega^*$ be the interior of $\R^3\setminus [\phi(\Omega)\setminus\{0\}]$, where $\phi(x) = x/\vert x\vert^2$ is the conformal inversion about the origin. Since the Yamabe quotient is conformally invariant, we have $Q^*(\Omega) = Q(\Omega^*)$. In particular, we have 
\begin{equation}
\label{esc3}
Q^*(\Omega) \le Q^*(B)
\end{equation}
with equality if and only if $\Omega$ is a ball.  

Let $\Omega\subset \R^3$ be a smooth, bounded domain with connected boundary. We are going to prove the following preliminary stability result: 

\begin{prop}
\label{prelim-stability}
For every $\eps > 0$ there is a $\delta > 0$ such that if $Q^*(B) - Q^*(\Omega) < \delta$ then there is a ball $B(x,r)$ such that $\Omega \subset B(x,r)$ and $\bd B(x,r)$ is contained in the $\eps r$-neighborhood of $\bd \Omega$. 
\end{prop}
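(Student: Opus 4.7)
The approach is to translate near-saturation of $Q^*$ into an almost-umbilic condition on $\bd \Omega$, and then invoke the De Lellis-M\"uller almost-umbilic estimate (referenced in the remark on the exponent $\eps^9$) to deduce closeness of $\bd \Omega$ to a round sphere. The overall strategy combines four ingredients: the existence of a Yamabe minimizer on the exterior (via Cherrier/Escobar), a Stern-type level-set monotonicity formula, the Gauss-Bonnet identity---which is available precisely because the connected-boundary hypothesis forces $\bd \Omega$ to be a topological sphere---and the De Lellis-M\"uller almost-umbilic estimate.

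First, by Escobar's rigidity one has $Q^*(\Omega) < Q^*(B)$ whenever $\Omega$ is not a ball, and in that case Cherrier's argument provides a smooth positive minimizer $u$ of $Q^*(\Omega)$ on $\R^3 \setminus \Omega$. Normalize $u$ by $\int u^6 = 1$. Then $u$ satisfies an Euler-Lagrange system $-\Delta u = \lambda u^5$ on $\R^3 \setminus \Omega$ together with a Robin-type boundary condition on $\bd \Omega$, and in the associated conformal metric $\tilde g = u^4 g_{\text{euc}}$ the boundary $\bd \Omega$ is a minimal surface while the interior has constant scalar curvature proportional to $\lambda$; moreover $\lambda = \lambda(\Omega) \to 0$ as $Q^*(\Omega) \to Q^*(B)$. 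Following the Stern / Bray-Kazaras-Khuri-Stern harmonic-function program, I then apply the Bochner identity to an auxiliary positive function on $(\R^3 \setminus \Omega, g_{\text{euc}})$---for instance the harmonic capacitary potential, or a truncation of $1/|x - p|$ for a point $p \in \Omega$ chosen optimally so that the corresponding test-function upper bound for $Q^*(\Omega)$ is nearly saturated. Integration of the Bochner identity between two level sets of this auxiliary function produces a monotone quantity coupling mean curvature, Gaussian curvature, and gradient norm. Comparing its asymptotic value at infinity (where the level sets are almost round) with its value on $\bd \Omega$, and inserting the Gauss-Bonnet equality $\int_{\bd \Omega} K\, dA = 8\pi$, one extracts a quantitative umbilicity bound of the form
\[
\int_{\bd \Omega} |A_0|^2\, dA \le C\,(Q^*(B) - Q^*(\Omega))^{\alpha}
\]
for a fixed exponent $\alpha > 0$ and some constant $C$, where $A_0$ denotes the traceless second fundamental form of $\bd \Omega$.

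With the umbilicity defect under control, the De Lellis-M\"uller theorem yields a round sphere $S(x, r)$ together with a conformal parametrization $\bd \Omega \to S(x, r)$ whose conformal factor is $W^{2,2}$-close to a constant. In particular, the Hausdorff distance between $\bd \Omega$ and $S(x, r)$ is controlled by a power of the umbilicity defect, and hence by a power of $\delta$; enlarging $r$ by a factor $1 + O(\eps)$ one obtains a ball $B(x, r)$ containing $\Omega$ whose boundary lies in the $\eps r$-neighborhood of $\bd \Omega$, which is the stated conclusion.

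The main obstacle is the middle step: converting the global energy deficit $Q^*(B) - Q^*(\Omega)$ into a local, quantitative umbilicity bound on $\bd \Omega$. The delicate point is selecting the auxiliary harmonic function and the monotone quantity so that the scalar-curvature correction coming from $\lambda$, the mean-curvature boundary contribution, and the topological Gauss-Bonnet term combine cleanly into an $L^2$-bound on $A_0$ without cancellation loss. The connected-boundary hypothesis enters precisely at Gauss-Bonnet, since for a higher-genus boundary the topological constant changes and the rigidity structure is lost. A subsidiary issue is ensuring that the center $x$ of the enclosing ball can be chosen so that $\Omega \subset B(x, r)$ rather than merely being Hausdorff-close to $S(x, r)$, which is resolved by the slight enlargement of $r$ noted above.
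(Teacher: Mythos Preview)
Your outline has the right ingredients (level-set monotonicity plus De Lellis--M\"uller), but there are two genuine gaps. First, the claim that connectedness of $\bd\Omega$ ``forces $\bd\Omega$ to be a topological sphere'' is simply false---a solid torus has connected boundary of genus one---so you cannot invoke $\int_{\bd\Omega} K\,dA = 4\pi\chi$ with $\chi=2$. The paper only ever uses the Gauss--Bonnet \emph{inequality} $\int_{\Sigma_t} K \le 4\pi$ on interior level sets (valid because $\chi\le 2$ for any connected closed surface); that $\bd\Omega$ is a sphere is a downstream consequence (Theorem~\ref{main3}), not an input here. Relatedly, the paper deliberately avoids the Yamabe minimizer you open with, precisely because its Robin condition involves the a priori uncontrolled mean curvature of $\bd\Omega$; the Dirichlet condition on the capacitary potential is what makes the analysis tractable.

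Second, and this is the main structural gap, the monotonicity formula does \emph{not} yield an umbilicity bound on $\bd\Omega$ itself. What the capacitary-potential argument actually produces (Proposition~\ref{level-set-estimate}) is
\[
\int_0^\infty b(\tau)\int_{\{w=\tau\}} \|\mathring A\|^2\,da\,d\tau \le Q^*(B)-Q^*(\Omega),
\]
with an explicit weight satisfying $b(0)=0$; the estimate degenerates at the boundary. Hence De Lellis--M\"uller is applied to an \emph{interior} level set $\Sigma_s=\{u=s\}$ with $s$ slightly below $1$, giving $\Omega\subset\Omega_s\subset B(x,r)$ with $\Sigma_s$ nearly round. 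At this stage nothing prevents $\Omega$ from being, say, a tiny ball deep inside $B(x,r)$. The paper closes this with a capacity comparison: $\cp(\Omega)/\cp(\Omega_s)=1/s\approx 1$ combined with $\Omega_s\approx B(x,r)$ gives $\cp(\Omega)/\cp(B(x,r))\ge 1-o(1)$, and a separate elementary lemma (Proposition~\ref{capacity-est}) then forces every point of $\bd B(x,r)$ to lie within $\eps r$ of $\bd\Omega$. Your proposal omits this step entirely; the ``slight enlargement of $r$'' you mention does not address it.
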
 

Escobar's original proof of rigidity in (\ref{esc3}) involves a conformal change by a harmonic function $u$ which satisfies a Neumann type boundary condition $\bd_\nu u = \frac{1}{4}H u$.  Since $\bd \Omega$ can be arbitrarily wild a priori and the mean curvature of $\bd \Omega$ appears in this boundary condition, it seems very difficult to control such a function.  Therefore, we re-derive the inequality (\ref{esc3}) with rigidity using the capacitary potential of $\Omega$. More precisely, let $u$ be the solution to 
\[
\begin{cases}
\lap u = 0, &\text{in } \R^3\setminus \Omega,\\
u = 1, &\text{on } \bd \Omega,\\
u\to 0, &\text{at infinity}. 
\end{cases}
\]
Then let $w = -\log u$.  We show that there is a test function of the form $f = \vert \grad w\vert^{1/2} s(w)$ which satisfies 
\begin{equation}
\label{f-est}
\frac{\int_{\R^3\setminus \Omega} \vert \grad f\vert^2 - \frac{1}{4}\int_{\bd \Omega} Hf^2\, da}{\left(\int_{\R^3\setminus \Omega} f^6\, dv\right)^{1/3}}\le Q^*(B)
\end{equation}
with equality if and only if $\Omega$ is a ball. To prove this, we combine several monotonicity formulas of Miao \cite{miao2023mass} together with a novel integration by parts argument.  It is essential for our applications that $u$ satisfies a {Dirichlet} type condition on $\bd \Omega$. 

Next we focus on the stability. In fact, using the monotonicity formulas, we can refine the estimate (\ref{f-est}) to show that 
\[
\int_0^\infty b(\tau) \int_{\{w=\tau\}} \| \mathring A\|^2\, da \, d\tau \le Q^*(B) - Q^*(\Omega),
\]
where $b$ is a smooth function with $b(\tau) > 0$ for $\tau > 0$. 
Write $o(1)$ for a quantity that goes to 0 as $Q^*(\Omega)\to Q^*(B)$. It follows from the above inequality that some level set $\Sigma = \{u = 1-o(1)\}$ satisfies 
\[
\int_{\Sigma} \| \mathring A\|^2\, da = o(1). 
\]
We can then apply the almost-umbillic estimate of De Lellis-M\"uller \cite{de2005optimal} to deduce that $\Sigma$ is very close to a round sphere $\bd B(x,r)$. To conclude, we need to compare $\bd \Omega$ with $\bd B(x,r)$. The fact that $\Sigma$ is a level set of the capacitary potential implies that 
\[
\frac{\cp( \Omega)}{\cp( B(x,r))} \ge 1 - o(1). 
\] 
Finally, we show that the only way this can occur is if every point of $\bd B(x,r)$ is close to some point of $\bd \Omega$, as in the following figure.
\[
\includegraphics[width=2in]{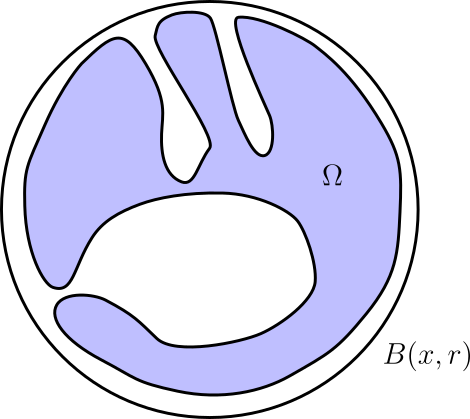}
\]
The preliminary stability result then follows.

{\bf Step 2:} Now we return to the problem for interior domains. Let $\Omega\subset \R^3$ be a smooth, bounded domain with connected boundary. We would like to apply the preliminary stability result to deduce Theorem \ref{main1}. To motivate our ideas, it is helpful to consider the following example. 

\begin{ex}
Let $\Omega_k$ be a dumbbell shaped domain obtained by smoothing 
\[
B((-2,0,0),1) \cup B((2,0,0),1) \cup \left\{y^2 + z^2\le \frac{1}{k^2},\ -2\le x\le 2\right\}.
\]
We are going to show that $Q(\Omega_k) \not\to Q(B)$ as $k\to \infty$. In particular, this shows that the Yamabe connect sum inequality \cite{schwartz2009monotonicity} cannot be achieved geometrically within $\R^3$. 

Suppose to the contrary that $Q(\Omega_k)\to Q(B)$.  Our strategy is to conformally invert about a point $p\in \Omega_k$ and then appeal to Proposition \ref{prelim-stability} to get a contradiction. For this to work, it is essential to make a good choice for the point $p$. Indeed, suppose we pick $p = (-2,0,0)$ and then let 
\[
\phi(x) = \frac{x  + (2,0,0)}{\vert x + (2,0,0)\vert^2}. 
\]
Let $\Omega_k^*$ be the interior of $\R^3\setminus [\phi(\Omega_k\setminus \{(-2,0,0)\})]$.  Then $Q^*(\Omega_k^*) \to Q^*(B)$ so we can apply Proposition \ref{prelim-stability}. However, this does not lead to a contradiction. Indeed, the set $\Omega_k^*$ looks like a ball with a thin neck drilled out leading to a large cavern. In particular, $\bd B(0,1)$ is contained in a small neighborhood of $\bd \Omega_k^*$.  The problem is that, for this choice of $p$, the conformal inversion ``hides away'' the features that distinguish $\Omega_k$ from a ball. 
\[
\includegraphics[width=4in]{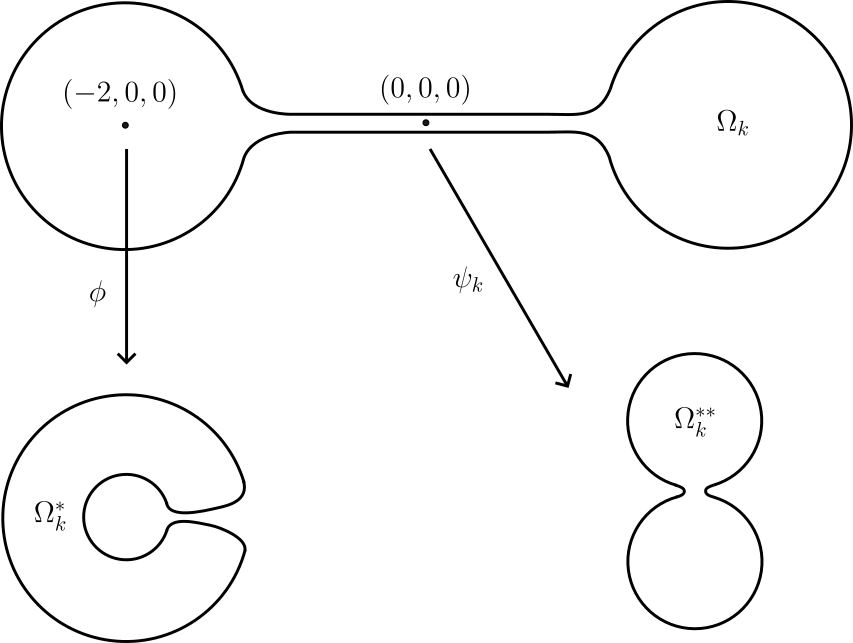}
\]
In this example, a better choice is to take $p = (0,0,0)$. Let 
\[
\psi_k(x) = \frac{x}{k\vert x\vert^2}
\]
be the conformal inversion centered at the origin which takes $\bd B(0,k\inv)$ to $\bd B(0,1)$. Let $\Omega_k^{**}$ be the interior of $\R^3 \setminus [\psi_k(\Omega_k\setminus \{(0,0,0)\})]$. Then again we have $Q^*(\Omega_k^{**}) \to Q^*(B)$ and so we can apply Proposition \ref{prelim-stability}. In this case, we do get a contradiction. Indeed, when $k$ is very large, the set $\Omega_k^{**}$ is essentially the interior of a horn torus, and it is clear that the conclusion of Proposition \ref{prelim-stability} does not hold. 
\end{ex} 

In the general case, we need to find a systematic way to select the point $p$. Motivated by the previous example, we will select $p$ to be the center of the smallest ball contained in $\Omega$ which ``intersects $\bd \Omega$ in multiple different directions.'' More precisely, let  
$\mathcal A(\Omega)$ be the set of all balls $B(x,r)\subset \Omega$ such that there exist points $y,z\in \bd B(x,r)\cap \bd \Omega$ for which the vectors $y-x$ and $z-x$ make at least a 45 degree angle. The choice of 45 degrees is arbitrary and we have selected it purely for convenience; any other fixed positive angle would suffice. The set $\mathcal A(\Omega)$ is always non-empty. Given $B(x,r)\in \mathcal A(\Omega)$, we can do a conformal inversion centered at $x$, apply Step 1, and then undo the conformal inversion to deduce the following structural result:

\begin{prop}
\label{A-structure}
For every $\eps > 0$ there is a $\delta > 0$ so that if $Q(B)-Q(\Omega) < \delta$ then the following property holds: whenever $B(x,r)\in \mathcal A(\Omega)$ it follows that $\bd B(x,r)$ is contained in the $\eps r$-neighborhood of $\bd \Omega$.  
\end{prop}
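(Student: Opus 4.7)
The plan is to reduce Proposition \ref{A-structure} to Proposition \ref{prelim-stability} via a conformal inversion centered at $x$. After translating and rescaling, I may assume $x=0$ and $r=1$, so that $B(0,1) \subset \Omega$ and there exist $y, z \in \bd B(0,1) \cap \bd \Omega$ with $\angle yOz \geq 45^\circ$, where $O$ denotes the origin. I would apply the conformal inversion $\phi(p) = p/|p|^2$, which fixes the unit sphere pointwise, and define the bounded domain $\tilde\Omega$ by $\cl{\tilde\Omega} = \phi(\R^3 \setminus \Omega) \cup \{0\}$. The inclusion $B(0,1) \subset \Omega$ inverts to $\tilde\Omega \subset \cl B(0,1)$, and $y, z$ remain in $\bd \tilde\Omega \cap \bd B(0,1)$. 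Conformal invariance gives $Q^*(\tilde\Omega) = Q(\Omega)$, so $Q^*(B) - Q^*(\tilde\Omega) < \delta$.

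Given $\eps > 0$, I would fix a small $\eps' > 0$ (to be chosen later in terms of $\eps$) and take $\delta$ to be the threshold from Proposition \ref{prelim-stability} applied with parameter $\eps'$. That proposition produces a ball $B(x',r')$ with $\tilde\Omega \subset B(x',r')$ and $\bd B(x',r') \subset N_{\eps' r'}(\bd\tilde\Omega)$. The key geometric step will be to show that $B(x',r')$ is Hausdorff close to $B(0,1)$, with distance bounded by a constant multiple of $\eps'$. First, since $\bd\tilde\Omega \subset \cl B(0,1)$, the conclusion of Proposition \ref{prelim-stability} gives $|x'| + r' \leq 1 + \eps' r'$, bounding how far $B(x',r')$ reaches outside $B(0,1)$. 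Second, the constraints $y, z \in \cl B(x',r')$ with $|y| = |z| = 1$ yield $y \cdot x' \geq \alpha$ and $z \cdot x' \geq \alpha$ for $\alpha = (1 + |x'|^2 - (r')^2)/2$. Averaging gives $|(y+z)/2| \cdot |x'| \geq \alpha$, and the angle condition $\angle yOz \geq 45^\circ$ forces $|(y+z)/2| \leq \cos 22.5^\circ < 1$. Combining these two inequalities yields $|x'| \leq C\eps'$ for some constant $C$, and then $r' \in [1 - C\eps', 1 + C\eps']$ via $r' \geq |y - x'| \geq 1 - |x'|$.

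With $B(x',r')$ known to be within a constant times $\eps'$ of $B(0,1)$, every $p \in \bd B(0,1)$ is within distance $O(\eps')$ of $\bd B(x',r')$, hence within $O(\eps')$ of some $\tilde q \in \bd \tilde\Omega$ by Proposition \ref{prelim-stability}. For $\eps'$ sufficiently small, we may arrange $|\tilde q| \geq 1/2$. The final step undoes the inversion: since $|p| = 1$ and $\phi(p) = p$, the conformal distance identity
\[
|\phi(\tilde q) - p| = \frac{|\tilde q - p|}{|\tilde q| \cdot |p|}
\]
shows that $q := \phi(\tilde q) \in \bd\Omega$ satisfies $|q - p| \leq 2|\tilde q - p|$, which is $O(\eps')$. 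Therefore $\bd B(0,1)$ lies in a neighborhood of $\bd\Omega$ of radius $O(\eps')$. Choosing $\eps'$ small enough that this is at most $\eps$, and undoing the rescaling, completes the argument.

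The chief obstacle is the geometric rigidity in step two: the angle condition in the definition of $\mathcal A(\Omega)$ is precisely what prevents $B(x',r')$ from collapsing to a proper sub-ball of $B(0,1)$, a situation in which the stability of $\bd \tilde\Omega$ about $\bd B(x',r')$ could not be transferred to stability of $\bd\Omega$ about $\bd B(0,1)$. This is the only place where the angle hypothesis is used beyond the basic inclusion $B(x,r) \subset \Omega$, which explains why any fixed positive angle would suffice in place of $45^\circ$.
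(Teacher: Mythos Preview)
Your proof is correct and follows essentially the same approach as the paper's own proof of this proposition: reduce to the exterior stability result via conformal inversion centered at $x$, use the angle hypothesis to pin the ball $B(x',r')$ close to $B(0,1)$, and then undo the inversion. The only stylistic difference is in the middle step: where the paper bounds $r^*$ from above and below by a direct geometric argument (with a picture) and then infers $|x^*|$ is small, you instead bound $|x'|$ first via the dot-product inequality $|(y+z)/2|\cdot|x'|\ge \alpha$ combined with $|x'|+r'\le 1+\eps' r'$, and then read off bounds on $r'$; both routes use the angle condition in exactly the same way and yield the same $O(\eps')$ control.
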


Finally, we select $B(p,r)$ to be a ball in $\mathcal A(\Omega)$ of smallest radius. Such a ball must exist because $\Omega$ is smooth. Applying the previous result, we deduce there is a dense net of points belonging to the complement of $\Omega$ lying just outside $\bd B(p,r)$. 
The following figure shows $\Omega$ together with the ball $B(p,r) \in \mathcal A(\Omega)$. The cross marks indicate points belonging to the complement of $\Omega$.  To prove Theorem \ref{main1}, we want to show that $\Omega \subset B(p,r(1+\eps))$. Suppose to the contrary that there is a point $q\in \Omega$ with $\vert q-p\vert \ge r(1+\eps)$ as in the figure. Then we need to argue that, as indicated by the blue circles below,  there is actually a ball in $\mathcal A(\Omega)$ with radius smaller than $r$.

\[
\includegraphics[width=3in]{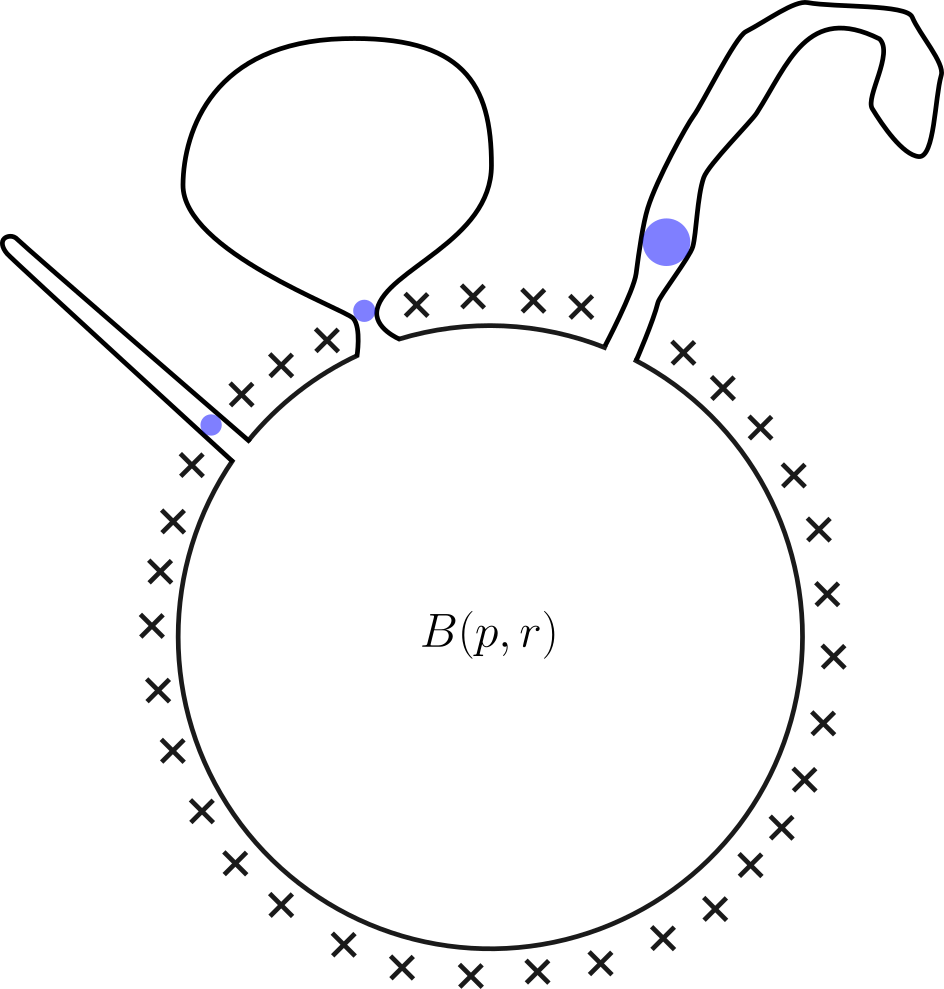}
\]

While it is intuitively clear that such balls must exist, it seems somewhat subtle to rigorously prove this. To do so, we rely on some tools from computational geometry developed to study the medial axis. The {\it medial axis} $\mathcal M$ of $\Omega$ is the set of all points $x\in \Omega$ with more than one nearest neighbor on $\bd \Omega$.  The radius function $\mathcal R$ is defined by $\mathcal R(x) = d(x,\bd \Omega)$ for $x\in \Omega$.  Finally, the separation angle at $x\in \mathcal M$ is the largest possible angle that can be formed by the vectors $y-x$ and $z-x$ for $y,z \in \bd B(x,\mathcal R(x)) \cap \bd \Omega$. In this language, to complete the proof, we need to find a point $x\in \mathcal M$ where $\mathcal R(x) < r$ and where the separation angle is at least 45 degrees.  This can be done using a flow $\mathfrak C$ developed by Lieutier \cite{lieutier2004any}. This flow $\mathfrak C$ extends the gradient flow of $\mathcal R$. Lieutier originally used it to show that every bounded open set in Euclidean space is homotopic to its medial axis.  For our purposes, there is a key relation between $\mathcal R$ and the separation angle along this flow, and this makes it perfectly suited for finding the desired point $x$. 

\subsection{Organization} 

The remainder of this paper is organized as follows. In Section \ref{Section:Medial}, we discuss some background material related to the medial axis.  In Section \ref{Section:Harmonic} we give a new proof of Escobar's inequalities using the capacitary potential. The main results are proven in Section \ref{Section:Stability}. Finally, in Section \ref{Section:Comparison}, we give a qualitative comparison between the Yamabe quotient $Q$ and the coefficient of quasi-conformality $K$.  

\subsection{Acknowledgements} The authors would like to thank Xin Zhou for his continual support and guidance. L.M. is supported by an AMS Simons Travel Grant. X.Y. is supported by Hutchinson Fellowship.

\section{The Medial Axis} 
\label{Section:Medial}

As part of our proof, we rely on some tools from computational geometry that may be unfamiliar to the reader. In particular, we will make use of the medial axis of an open subset of $\R^3$. In this brief section, we review the relevant background material.  The medial axis was first introduced by Blum \cite{blum1967transformation}, as a tool to study the shape of biological structures. For example, the medial axis of a network of blood vessels acts as a sort of centerline, capturing the overall shape of the network. 

Mathematically, the medial axis of an open set $\Omega$ is the set of points with more than one nearest neighbor on the boundary. Lieutier \cite{lieutier2004any} proved that any open bounded subset of $\R^n$ is homotopy equivalent to its medial axis. In the following, we use the notation from \cite{lieutier2004any}. Let $\Omega\subset \R^n$ be an open bounded set.

\begin{defn} Given $x\in \Omega$, define the set  
$
\Gamma(x) = \{y\in \bd \Omega: d(x,y) = d(x,\bd \Omega)\}. 
$
Thus $\Gamma(x)$ is the set of points on $\bd \Omega$ which are closest to $x$. 
\end{defn}

\begin{defn} The {\it medial axis} of $\Omega$ is the set 
$
\mathcal M = \{x\in \Omega: \# \Gamma(x) \ge 2\}.
$
\end{defn} 

The following figure illustrates the medial axis of an hour-glass shaped polygon in the plane. 
\[
\includegraphics[width=2in]{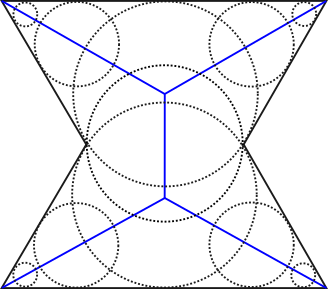}
\]
When $x\in \mathcal M$, there exists a unique closed ball of smallest radius containing $\Gamma(x)$. Following \cite{lieutier2004any}, we denote the center of this ball by $\Theta(x)$ and the radius of this ball by $\mathcal F(x)$. When $ \#\Gamma(x) = 1$, we let $\Theta(x)$ be the unique point of $\bd \Omega$ closest to $x$ and we set $\mathcal F(x) = 0$.  

Define the radius function $\mathcal R(x) = d(x,\bd \Omega)$ for $x\in \Omega$. 
Then, for each $x\in \Omega$, define the vector 
\[
\nabla(x) = \frac{x-\Theta(x)}{\mathcal R(x)}.
\]
This vector coincides with the gradient of $\mathcal R$ when $x \in \Omega \setminus \overline{\mathcal  M}$. When $x\in \cl{\mathcal M}$, the vector $\nabla (x)$ still points in the direction in which $\mathcal R$ increases the fastest. It is proven in \cite{lieutier2004any} that 
\[
\|\nabla(x)\|^2 = 1 - \frac{\mathcal F(x)^2}{\mathcal R(x)^2}.
\]
Therefore one always has $0\le \|\nabla(x)\|\le 1$, and $\|\nabla(x)\| < 1$ precisely when $x$ belongs to the medial axis. The function $\mathcal F(x)$ is upper semi-continuous, and $\mathcal R(x)$ is continuous, and therefore $\|\nabla(x)\|$ is lower semi-continuous. 

\begin{defn}
Fix a point $x\in \mathcal M$. Given two points $y,z\in  \Gamma(x)$, let $\angle y x z \in [0,\pi]$ denote the angle formed by the vectors $y-x$ and $z-x$.  The {\it separation angle} at the point $x$ is defined to be 
$
\max \{\angle y x z: y,z\in \Gamma(x)\}.
$
\end{defn}

\begin{prop}
\label{angle}
Fix a point $x\in \mathcal M$. If $\|\nabla(x)\|^2 \le \frac 1 2$ then the separation angle at $x$ is at least $\frac{\pi}{4}$. 
\end{prop}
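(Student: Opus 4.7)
My plan is to prove the contrapositive: if the separation angle at $x$ is strictly less than $\pi/4$, then $\|\nabla(x)\|^2 > 1/2$. The key geometric fact I would use is the standard characterization of the smallest enclosing ball of a compact set: the center of the minimal enclosing ball lies in the convex hull of the points of the set that touch the boundary sphere. Applied here, this tells me there exist points $y_1,\dots,y_k$ in $\Gamma(x) \cap \bd B(\Theta(x),\mathcal{F}(x))$ and nonnegative weights $\lambda_1,\dots,\lambda_k$ summing to $1$ such that $\Theta(x) = \sum_i \lambda_i y_i$. Since $x \in \mathcal{M}$, we have $\mathcal{F}(x) > 0$ and $k \ge 2$.

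Using this representation of $\Theta(x)$, I compute
\[
\nabla(x) \;=\; \frac{x-\Theta(x)}{\mathcal{R}(x)} \;=\; \sum_i \lambda_i\,\frac{x-y_i}{\mathcal{R}(x)} \;=\; -\sum_i \lambda_i u_i,
\]
where $u_i := (y_i-x)/\mathcal{R}(x)$. Because $y_i \in \Gamma(x)$, each $u_i$ is a unit vector. Hence
\[
\|\nabla(x)\|^2 \;=\; \sum_{i,j} \lambda_i \lambda_j \,(u_i \cdot u_j).
\]

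Now I invoke the hypothesis on the separation angle. If the separation angle at $x$ were strictly less than $\pi/4$, then $\angle y_i x y_j < \pi/4$ for every pair $i \ne j$ of distinct points in $\Gamma(x)$, so $u_i \cdot u_j > \cos(\pi/4) = \frac{\sqrt 2}{2}$ for $i \ne j$; and of course $u_i \cdot u_i = 1 > \frac{\sqrt 2}{2}$ for the diagonal terms. Combining these estimates,
\[
\|\nabla(x)\|^2 \;>\; \frac{\sqrt 2}{2} \sum_{i,j}\lambda_i \lambda_j \;=\; \frac{\sqrt 2}{2} \;>\; \frac{1}{2},
\]
which contradicts the assumption $\|\nabla(x)\|^2 \le \tfrac12$. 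Thus the separation angle at $x$ must be at least $\pi/4$, as desired.

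I do not expect any serious obstacle here. The only subtlety worth checking carefully is the appeal to the characterization of the smallest enclosing ball (and the fact that at least two boundary-touching points appear in the convex combination when $x \in \mathcal{M}$); both are classical and hold for compact subsets of $\R^n$. Everything else is a one-line inner-product computation.
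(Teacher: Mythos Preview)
Your argument is correct, but it differs from the paper's route. The paper also argues by contrapositive, yet it avoids the convex-hull characterization of the minimal enclosing ball entirely: instead, it observes that if the separation angle is $\alpha<\pi/4$ then $\Gamma(x)$ lies in a spherical cap of angular radius $\alpha$ on $\bd B(x,\mathcal R(x))$, and such a cap is enclosed by a Euclidean ball of radius $\mathcal R(x)\sin\alpha$; hence $\mathcal F(x)\le \mathcal R(x)\sin\alpha$ and the formula $\|\nabla(x)\|^2=1-\mathcal F(x)^2/\mathcal R(x)^2$ immediately gives $\|\nabla(x)\|^2\ge\cos^2\alpha>\tfrac12$.

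The trade-off is this: the paper's proof is more self-contained (it needs only the definition of $\mathcal F$ and an elementary cap-in-ball estimate), while yours imports the classical fact that $\Theta(x)$ is a convex combination of contact points. On the other hand, your inner-product computation is cleaner and actually yields the stronger conclusion $\|\nabla(x)\|^2>\cos\alpha$ rather than $\cos^2\alpha$. Either approach suffices here; both are short.
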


\begin{proof}
We will prove the contrapositive. Suppose the separation angle $\alpha$ at $x$ is less than $\frac{\pi}{4}$.  Choose a point $y\in \Gamma(x)$ and then let $C$ be the closed cone with vertex $x$, axis $y-x$, and opening angle $\alpha$. Then it follows that $\Gamma(x) \subset C \cap \bd B(x,\mathcal R(x))$. By elementary geometry, there is a closed ball of radius $r = \mathcal R(x) \sin(\alpha)$ which encloses $C \cap  \bd B(x,\mathcal R(x))$. It follows that $\mathcal F(x) \le \mathcal R(x) \sin(\alpha)$ and therefore that $\|\nabla(x)\|^2 > \frac{1}{2}$. 
\end{proof} 

In general, the vector field $\nabla$ will not depend continuously on $x$. Nevertheless, it is proved in \cite{lieutier2004any} that there is a well-defined, continuous map 
$
\mathfrak C\colon  [0,\infty)\times \Omega \to \Omega
$
which ``is the flow of'' $\nabla$. More precisely one has $\mathfrak C(s,\mathfrak C(t,x)) = \mathfrak C(s+t,x)$ and 
\begin{gather*}
\mathfrak C(t,x) = x + \int_0^t \del(\mathfrak C(\tau,x))\, d\tau,\\
\mathcal R(\mathfrak C(t,x)) = \mathcal R(x) + \int_0^t \|\del (\mathfrak C(\tau,x))\|^2\, d\tau. 
\end{gather*}
The flow is only defined forward in time and it may happen that $\mathfrak C(t,x) = \mathfrak C(t,y)$ even though $x\neq y$. Trajectories of $\mathfrak C$ are continuous, but may have sharp corners. The following figure illustrates the flow map $\mathfrak C$ for an hour-glass shaped polygon.
\[
\includegraphics[width=4in]{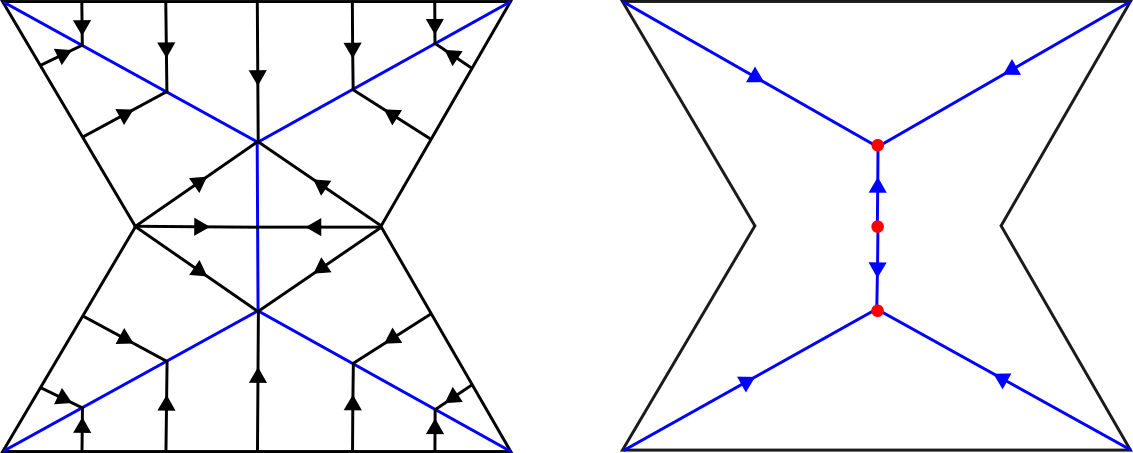}
\]
Points which do not lie on the medial axis flow directly away from the closest point on the boundary until they hit the medial axis. Points of the medial axis flow along the medial axis in the direction in which the radius $\mathcal R$ increases the fastest. There are three fixed points indicated by the red dots. These fixed points satisfy $\Theta(x) = x$ and $\mathcal F(x) = \mathcal R(x)$ so that $\nabla (x) = 0$.

\section{Escobar's Inequalities}

\label{Section:Harmonic}

Let $\Omega \subset \R^3$ be a smooth, bounded domain with connected boundary. Let $u$ be the capacitary potential for $\Omega$, i.e., the solution to 
\[
\begin{cases}
    \lap u = 0, &\text{in } \R^3\setminus \Omega\\
    u=1, &\text{on } \bd \Omega\\
    u\to 0, &\text{at infinity}.
\end{cases}
\]
Then set $w = -\log u$. We note that $w$ satisfies $\lap w = \vert \grad w\vert^2$.
In this section, we show how to construct a good test function for the Yamabe quotient of the exterior domain $\R^3\setminus \Omega$ using the function $w$. 

\begin{defn}
    Define the test function 
    \[
    f = s(w) \vert \grad w\vert^{1/2}, \quad \text{where } s(t) = \sqrt{\frac{e^w}{1+e^{2w}}}.
    \]
\end{defn}

\begin{rem} In the model case where $\Omega = B$ is the unit ball, the optimal test function for the Yamabe quotient is the conformal factor transforming $\R^3\setminus \Omega$ into a round hemisphere. Note that in this model situation $u = r^{-1}$ and $w=\log r$. Thus
$f$ is the optimal test function in the model setting. 
\end{rem}


\subsection{Rigidity} First we concentrate on obtaining inequality (\ref{esc3}) with rigidity. Supposing $t$ is a regular value of $w$, define $\Sigma_t = \{w=t\}$. 
The assumption that $\bd \Omega$ is connected ensures that $\Sigma_t$ is connected for all regular values $t$ (see \cite[Lemma 6]{mazurowski2024stability}). 
Define the following quantities:
\[
W(t):=\int_{\Sigma_t}|\nabla w|^2,\quad U(t):=\int_{\Sigma_t}H|\nabla w|.
\]

\begin{lem}
For each regular value $t$ of $w$, the above quantities are differentiable and satisfy:
\begin{align*}
    W'(t)=2W(t)-U(t),
\end{align*}
and
\begin{align*}
    U'(t)\leq 8\pi-U(t)+W(t)-\frac{|\nabla|\nabla w||^2}{|\nabla w|^2}.
\end{align*}
The latter inequality becomes an equality when $\Omega$ is a ball. 
\end{lem}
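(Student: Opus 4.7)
The plan is to compute both derivatives directly via the standard first variation formula for an integral over a smoothly moving hypersurface. At a regular value $t$ of $w$, a neighborhood of $\Sigma_t$ is smoothly foliated by the nearby level sets by the implicit function theorem, so $W$ and $U$ are smooth in $t$ on a neighborhood of $t$; parametrizing this foliation by the flow of $\nabla w/|\nabla w|^2$, whose normal speed is $\phi := 1/|\nabla w|$, yields
\[
\frac{d}{dt}\int_{\Sigma_t} g\, dA = \int_{\Sigma_t} \frac{\partial_\nu g + g\, H}{|\nabla w|}\, dA
\]
for any smooth ambient function $g$, where $\nu = \nabla w/|\nabla w|$. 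The one algebraic identity that drives everything is
\[
\partial_\nu |\nabla w| = |\nabla w|^2 - H |\nabla w|,
\]
obtained by decomposing the ambient Laplacian as $\Delta w = \Delta_{\Sigma_t} w + H\, \partial_\nu w + \partial_\nu^2 w$, using $w \equiv t$ on $\Sigma_t$ and $\partial_\nu w = |\nabla w|$, and then applying $\Delta w = |\nabla w|^2$.

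Taking $g = |\nabla w|^2$ and expanding $\partial_\nu |\nabla w|^2 = 2|\nabla w|\,\partial_\nu |\nabla w|$, the integrand in the first variation simplifies via the key identity to $2|\nabla w|^2 - H|\nabla w|$, whose integral is exactly $2W(t) - U(t)$. Taking $g = H|\nabla w|$, the cross terms cancel by the same identity and one is left with $U'(t) = U(t) + \int_{\Sigma_t}\partial_\nu H\, dA$. To compute $\partial_\nu H$, I use the standard evolution of mean curvature under a normal flow of speed $\phi$ in flat space, $\partial_t H = -\Delta_{\Sigma_t}\phi - |A|^2 \phi$, which gives $\partial_\nu H = -|\nabla w|\,\Delta_{\Sigma_t}(1/|\nabla w|) - |A|^2$. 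Integrating by parts on the closed surface $\Sigma_t$ turns this into
\[
\int_{\Sigma_t}\partial_\nu H\, dA = -\int_{\Sigma_t}\Bigl(\tfrac{|\nabla_{\Sigma_t}|\nabla w||^2}{|\nabla w|^2} + |A|^2\Bigr)\, dA.
\]

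To finish, I combine this with the orthogonal decomposition $|\nabla|\nabla w||^2 = |\nabla_{\Sigma_t}|\nabla w||^2 + (\partial_\nu|\nabla w|)^2$ and the key identity to trade the tangential-gradient term for the full ambient one modulo $\int_{\Sigma_t}(|\nabla w|-H)^2\, dA$. After algebraic rearrangement, the claimed estimate reduces to $\int_{\Sigma_t}(H^2 - |A|^2)\, dA \le 8\pi$. Since the Gauss equation in flat ambient space gives $H^2 - |A|^2 = 2K$, this is precisely the Gauss–Bonnet bound $\int_{\Sigma_t} K\, dA = 2\pi\chi(\Sigma_t) \le 4\pi$ for the connected closed orientable surface $\Sigma_t$. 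Equality forces $\chi(\Sigma_t) = 2$, which holds on every level set when $\Omega$ is a ball, giving the rigidity clause. The main obstacle is purely organizational bookkeeping: keeping ambient versus intrinsic gradients straight and managing sign conventions across the Pythagorean identities; once the key identity $\partial_\nu |\nabla w| = |\nabla w|^2 - H|\nabla w|$ and the normal-flow evolution of $H$ are in hand, the argument reduces to algebra plus a single application of Gauss–Bonnet.
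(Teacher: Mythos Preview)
Your proof is correct and follows essentially the same route as the paper: both compute $W'$ and $U'$ via the first-variation formula for the level-set foliation, use the identity $\partial_\nu|\nabla w| = |\nabla w|^2 - H|\nabla w|$ (the paper writes it as $H = |\nabla w| - \langle\nabla|\nabla w|,\nabla w\rangle/|\nabla w|^2$), invoke the evolution $\partial_t H = -\Delta_\Sigma\phi - |A|^2\phi$, split $|\nabla|\nabla w||^2$ into tangential and normal parts, and close with the Gauss equation plus Gauss--Bonnet. The only difference is bookkeeping---you isolate $\int_{\Sigma_t}\partial_\nu H$ first and reduce to $\int(H^2-|A|^2)\le 8\pi$ at the end, whereas the paper applies Gauss--Bonnet midway and then simplifies algebraically---but the ingredients and logic are identical.
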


\begin{proof}
    The first equality follows from direct computations. We focus on the second inequality. Note that
    \[
    H=|\nabla w|-\frac{\langle \nabla |\nabla w|,\nabla w\rangle}{|\nabla w|^2}.
    \]
    We compute
    \begin{align*}
        U'(t)&=\int_{\Sigma_t}H^2+\frac{\partial H}{\partial t}|\nabla w|+H\frac{\langle \nabla |\nabla w|,\nabla w\rangle}{|\nabla w|^2},\\
             &=\int_{\Sigma_t}-\|A\|^2+H|\nabla w|-\frac{|\nabla^{\Sigma}|\nabla w||^2}{|\nabla w|^2}\\
             &\leq 8\pi -\int_{\Sigma_t}H^2+\int_{\Sigma_t}H|\nabla w|-\int_{\Sigma_t}\frac{|\nabla|\nabla w||^2}{|\nabla w|^2}+\int_{\Sigma_t}\frac{\langle \nabla |\nabla w|,\nabla w\rangle^2}{|\nabla w|^4}.
    \end{align*}
    In the last step, we used the Gauss equation, 
    \[
    0=2K_{\Sigma_t}+\|A\|^2-H^2.
    \]
We rewrite the terms as
\begin{align*}
    &-H^2+H|\nabla w|+\frac{|\nabla^{\perp}|\nabla w||^2}{|\nabla w|^2}=\\
    &-\left(|\nabla w|^2-2\frac{\langle\nabla|\nabla w|,\nabla w\rangle}{|\nabla w|}+\frac{|\nabla^{\perp}|\nabla w||^2}{|\nabla w|^2}\right)+|\nabla w|^2-\frac{\langle\nabla|\nabla w|,\nabla w\rangle}{|\nabla w|}+\frac{|\nabla^{\perp}|\nabla w||^2}{|\nabla w|^2},\\
    &=\frac{\langle \nabla |\nabla w|,\nabla w\rangle}{|\nabla w|},\\
    &=|\nabla w|^2-H|\nabla w|,
\end{align*}
and then we conclude that
\begin{align*}
    U'(t)&\leq 8\pi-U(t)+W(t)-\int_{\Sigma_t}\frac{|\nabla|\nabla w||^2}{|\nabla w|^2},
\end{align*}
as needed. \end{proof}

We will also need the following fact, which follows from \cite{agostiniani2024green}.

\begin{prop}
    The function $U$ is absolutely continuous. 
\end{prop}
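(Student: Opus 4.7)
The plan is to reduce absolute continuity of $U$ to integrability of a single bulk quantity via the divergence theorem and the coarea formula, following the harmonic-function strategy of \cite{agostiniani2024green, miao2023mass}. On any regular level set $\Sigma_t$ I would use the identity
\[
H = |\nabla w| - \frac{\langle \nabla |\nabla w|, \nabla w\rangle}{|\nabla w|^2}
\]
already exploited in the previous lemma to split $U(t) = W(t) - F(t)$, where
\[
F(t) := \int_{\Sigma_t} \langle \nabla |\nabla w|, \nu_t\rangle\, d\sigma, \qquad \nu_t := \nabla w/|\nabla w|.
\]
Absolute continuity of $W$ follows from the coarea formula together with $\Delta w = |\nabla w|^2$ (or can be quoted directly from \cite{agostiniani2024green}), so the crux is to handle $F$.

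For regular values $t_1 < t_2$, applying the divergence theorem to the vector field $\nabla |\nabla w|$ on the shell $\{t_1 < w < t_2\}$ yields
\[
F(t_2) - F(t_1) = \int_{\{t_1 < w < t_2\}} \Delta |\nabla w|\, dx.
\]
Once $\Delta |\nabla w|$ is shown to be locally integrable on $\R^3\setminus \Omega$, the coarea formula recasts the right-hand side as $\int_{t_1}^{t_2} g(\tau)\, d\tau$ with $g(\tau) := \int_{\Sigma_\tau} \Delta|\nabla w|/|\nabla w|\, d\sigma$, which is precisely the definition of absolute continuity of $F$ on the open set of regular values. A standard monotone-limit argument over regular values then extends the identity across critical values and upgrades it to absolute continuity on the whole interval.

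The main obstacle is establishing integrability of $\Delta|\nabla w|$, both in the bulk and across the critical set $\{\nabla w = 0\}$. Away from critical points, the Bochner formula combined with $\Delta w = |\nabla w|^2$ gives
\[
\Delta |\nabla w| = \frac{|\nabla^2 w|^2 - |\nabla|\nabla w||^2}{|\nabla w|} + 2 \langle \nabla |\nabla w|, \nabla w\rangle,
\]
in which the first summand is non-negative by Kato and is controlled globally by the monotonicity formulas of Miao \cite{miao2023mass} together with the asymptotic decay $u \sim c|x|^{-1}$ at infinity; the second summand is pointwise dominated by $|\nabla w|\, |\nabla |\nabla w||$ and integrated in a similar fashion. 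As for the critical set, $u$ is a real-analytic positive harmonic function on $\R^3\setminus \Omega$, so $\{\nabla u = 0\}$ has Hausdorff dimension at most one, and a cut-off argument in shrinking tubular neighborhoods rules out any singular distributional contribution to $\Delta|\nabla w|$ there. The detailed verification of these $L^1$ bounds is exactly the content of the absolute-continuity argument in \cite{agostiniani2024green}, which is why I would ultimately appeal to that reference rather than reproduce it in full.
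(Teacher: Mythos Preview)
Your proposal is correct and follows the same overarching strategy as the paper: split $U$ into $W$ and a remainder, handle the remainder by appealing to \cite{agostiniani2024green}, and prove $W$ is absolutely continuous via the divergence theorem and coarea. The paper's execution is slightly more economical, however. Rather than writing $U = W - F$ with $F$ the flux of $\nabla|\nabla w|$ (which forces you to establish the $L^1$-integrability of $\Delta|\nabla w|$ across the critical set, invoking Bochner, Kato, and the cut-off argument you outline), the paper quotes directly from \cite{agostiniani2024green} the identity $U(s) = 4\pi + W(s) - e^{-s}\tilde F(e^s)$ with $\tilde F$ already known to be absolutely continuous. This leaves only the absolute continuity of $W$, which the paper obtains by applying the divergence theorem to the vector field $|\nabla w|\nabla w$ and noting that $\div(|\nabla w|\nabla w) = \langle\nabla|\nabla w|,\nabla w\rangle + |\nabla w|^3$ is locally \emph{bounded} (not merely locally integrable), thereby sidestepping any delicate analysis near critical points.
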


\begin{proof}
In \cite{agostiniani2024green}, Agostiniani-Mazzieri-Oronzio proved
\[
\int_{\{w=s\}} H\vert \grad w\vert\, da = 4\pi + \int_{\{w=s\}} \vert \grad w\vert^2\, da - e^{-s}F(e^s),
\]
where $F$ is an absolutely continuous function. Here the reader should note that the notation used in \cite{agostiniani2024green} is different from ours: the function $u$ in our notation corresponds to the function $1-u$ in \cite{agostiniani2024green}.
Hence to prove $U$ is absolutely continuous, it suffices to show that 
\begin{align}\label{eq:absolutecont}
\int_{\{w=s\}} \vert \grad w\vert^2\, da
\end{align}
is absolutely continuous. For every regular value $s$, we have
\begin{align*}
\int_{\{w=s\}} \vert \grad w\vert^2\, da - \int_{\{w=0\}} \vert \grad w\vert^2\, da &= \int_{\{0<w<s\}} \div(\vert \grad w\vert \grad w)\, dv\\
&= \int_{\{0<w<s\}} \la \grad \vert \grad w\vert, \grad w\ra + \vert \grad w\vert \lap w\, dv.
\end{align*}
Since 
\[
\vert \grad w\vert \lap w = \vert \grad w\vert^3, \quad \big\vert \la \grad \vert \grad w\vert,\grad w\ra\big\vert  = \frac{1}{2\vert \grad w\vert}\big\vert \la \grad \vert \grad w\vert^2, \grad w\ra \big\vert \le \big\vert \grad \vert \grad w\vert^2\big\vert,
\]
the integrand over the bulk region is locally bounded. This implies that 
\[
\int_{\{w=s\}} \vert \grad w\vert^2 \, da
\]
is absolutely continuous (c.f. \cite{colding2021singularities} Lemma 3.14). By \eqref{eq:absolutecont}, we know $U$ is absolutely continuous. 
\end{proof}

Finally, we recall three consequences of Miao's monotonicity formulas \cite{miao2023mass}. 

\begin{prop}
    For each regular value $t$ of $w$, we have 
    \begin{itemize}
        \item[(i)] $3W(t) \le U(t) + 4\pi$,
        \item[(ii)] $U(t)\ge 8\pi$,
        \item[(iii)] $W(t) \ge 4\pi$.
    \end{itemize}
    In all three cases, equality holds when $\Omega$ is a ball. 
\end{prop}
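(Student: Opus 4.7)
The plan is to derive all three inequalities from the differential relations of the preceding Lemma combined with the asymptotic behavior $W(t) \to 4\pi$ and $U(t) \to 8\pi$ as $t \to \infty$. The asymptotics follow from the standard expansion $u(x) = c|x|^{-1} + O(|x|^{-2})$ of the capacitary potential at infinity (with $c = \mathrm{cap}(\Omega)/(4\pi)$), which makes $\Sigma_t$ a nearly round sphere of radius $\sim c e^{t}$ and yields $W(t) - 4\pi = O(e^{-t})$ and $U(t) - 8\pi = O(e^{-t})$ by direct computation.

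For (iii), I would use a golden-ratio decoupling of the linear system. Setting $F_1 := W - 4\pi$ and $F_2 := U - 8\pi$, the preceding Lemma gives $F_1' = 2F_1 - F_2$ and $F_2' \le F_1 - F_2 - I(t)$, where $I(t) := \int_{\Sigma_t} |\nabla|\nabla w||^2 /|\nabla w|^2\, da$. The coefficient matrix $\bigl(\begin{smallmatrix}2 & -1 \\ 1 & -1\end{smallmatrix}\bigr)$ has eigenvalues $\varphi = (1+\sqrt 5)/2$ and $\psi = (1-\sqrt 5)/2$. A direct computation using $\varphi^3 = 2\varphi + 1$ shows that the $\varphi$-component $b(t) := (\varphi^2 F_1(t) - F_2(t))/\sqrt 5$ satisfies $b'(t) - \varphi b(t) \ge I(t)/\sqrt 5 \ge 0$, so $e^{-\varphi t} b(t)$ is non-decreasing. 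Combined with $e^{-\varphi t} b(t) \to 0$ at infinity (by the $O(e^{-t})$ decay), this forces $b(t) \le 0$, i.e., $F_2 \ge \varphi^2 F_1$. Inserting this back into the $F_1$ equation yields $F_1'(t) \le (2-\varphi^2)F_1(t) = \psi F_1(t)$, so $e^{-\psi t} F_1(t)$ is non-increasing; tending to $0$ at infinity, this forces $F_1 \ge 0$, which is (iii).

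For (i), refine the derivation of the $U'$ inequality in the preceding Lemma by invoking the Cauchy--Schwarz inequality $\int_{\Sigma_t} H^2\, da \ge U(t)^2/W(t)$ in place of the cruder bound; this improves the estimate to $U'(t) \le 8\pi - U^2/W + U$, still with equality for the ball. Set $P(t) := 3W(t) - U(t) - 4\pi$, so that $P(\infty) = 0$. Substituting $U = 2W - W'$ and simplifying yields
\[
P'(t) \ge 2\bigl(W(t) - 4\pi\bigr) + \frac{(W'(t))^2}{W(t)}.
\]
By (iii), both terms on the right are non-negative, so $P$ is non-decreasing. A non-decreasing function tending to $0$ at infinity must be non-positive, hence $P(t) \le 0$ for all $t$, which is (i). Finally (ii) follows from (i) and (iii) via $U(t) \ge 3W(t) - 4\pi \ge 12\pi - 4\pi = 8\pi$. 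Equality for the ball is checked directly on $\Sigma_t = \{r = e^{t}\}$, where $W \equiv 4\pi$ and $U \equiv 8\pi$.

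The main obstacle will be verifying the $O(e^{-t})$ asymptotic decay of $F_1$ and $F_2$ that powers the exponentially-weighted monotonicity arguments; this is a routine but somewhat detailed computation using the next-order expansion of the capacitary potential at infinity. The linear-algebraic identity $\varphi^3 = 2\varphi + 1$ is precisely what makes the $\varphi$-projection of the ODE system decouple, turning a coupled inequality into a clean sign statement.
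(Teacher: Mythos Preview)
Your reduction of the preceding Lemma contains an arithmetic slip that breaks the argument for (iii). From $U' \le 8\pi - U + W - I$ one gets
\[
F_2' \;\le\; F_1 - F_2 + 4\pi - I,
\]
not $F_2' \le F_1 - F_2 - I$. That extra $+4\pi$ spoils the golden-ratio decoupling: carrying it through yields only $b' - \varphi b \ge (I - 4\pi)/\sqrt 5$, and since $I$ need not dominate $4\pi$ (indeed $I \equiv 0$ for the ball), the monotonicity of $e^{-\varphi t} b(t)$ fails and you cannot conclude $b \le 0$. Your proof of (iii), and with it the chain (iii) $\Rightarrow$ (i) $\Rightarrow$ (ii), therefore collapses.

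The paper itself simply cites Miao for all three inequalities, so any self-contained argument is a genuinely different route. The standard such route requires one additional input beyond the Lemma, namely the Willmore inequality $\int_{\Sigma_t} H^2 \ge 16\pi$. From the intermediate step in the Lemma's proof one has $U' \le U - \int_{\Sigma_t}\|A\|^2$; combining Gauss--Bonnet and Willmore gives $\int \|A\|^2 = \int H^2 - 2\int K \ge 16\pi - 8\pi = 8\pi$, hence $U' \le U - 8\pi$. Then $e^{-t}(U-8\pi)$ is non-increasing and tends to $0$, proving (ii) directly. Feeding (ii) into $W' = 2W - U \le 2(W-4\pi)$ makes $e^{-2t}(W-4\pi)$ non-increasing to $0$, proving (iii). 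Your Cauchy--Schwarz refinement $U' \le 8\pi - U^2/W + U$ and the computation $P' \ge 2(W-4\pi) + (W')^2/W$ for (i) are correct once (iii) is in hand, so that part of your plan survives. The eigenvalue-decoupling idea is attractive, but the Lemma's inequality alone is simply not sharp enough to run it; Willmore is exactly what absorbs the stray $4\pi$.
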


\begin{proof}
Again we caution the reader that the notation in \cite{miao2023mass} is different from ours: the function $u$ in our notation corresponds to the function $1-u$ in \cite{agostiniani2020monotonicity}. Inequality (i) follows from equation (3.1) in \cite{miao2023mass}. Inequalities (ii) and (iii) follow from equations (3.15) and (3.16) in \cite{miao2023mass} since Euclidean space has zero ADM mass.
\end{proof}

Everything is now set up to estimate the Yamabe quotient of $f$. 



\begin{prop}
\label{f-quotient}
    We have 
    \[
    \frac{\int_{\mathbb R^3\setminus \Omega}|\nabla f|^2dv-\frac{1}{4}\int_{\partial\Omega}H f^2da}{\left(\int_{\mathbb R^3\setminus \Omega}f^6 dv\right)^{1/3}}\leq Q^*(B).
    \]
\end{prop}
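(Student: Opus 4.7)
The plan is to evaluate the Yamabe quotient of $f$ via the coarea formula, then combine the monotonicity lemma with inequalities (i) and (iii) to reduce to the model-case identity. Computing $\nabla f = s'(w)|\nabla w|^{1/2}\nabla w + \tfrac{s(w)}{2|\nabla w|^{1/2}}\nabla|\nabla w|$ and using the orthogonal decomposition $|\nabla|\nabla w||^2 = |\nabla^{\Sigma}|\nabla w||^2 + (\nabla_\nu|\nabla w|)^2$ together with the identity $\nabla_\nu|\nabla w| = |\nabla w|(|\nabla w|-H)$, the coarea formula yields
\[
\int_{\R^3\setminus\Omega}|\nabla f|^2\,dv = \int_0^\infty\Big[s'(t)^2 W(t) + s'(t)s(t)\big(W(t)-U(t)\big) + \tfrac{s(t)^2}{4}J(t)\Big]\,dt,
\]
where $J(t) := \int_{\Sigma_t}\tfrac{|\nabla|\nabla w||^2}{|\nabla w|^2}\,da$. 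Since $w\equiv 0$ on $\partial\Omega$, the boundary term is simply $\tfrac{1}{4}\int_{\partial\Omega}Hf^2\,da = \tfrac{s(0)^2}{4}U(0)$.

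Next I would apply the monotonicity lemma, rearranged as $J(t) \le 8\pi + W(t) - U(t) - U'(t)$, and then integrate by parts on the $U'$ contribution. The crucial point is that the boundary term produced at $t=0$ is exactly $+\tfrac{s(0)^2}{4}U(0)$, which cancels the Yamabe boundary term. The vanishing at $t=\infty$ follows from $s(t)^2 \sim e^{-t}$ together with $U(t)\to 8\pi$ (standard asymptotics of the capacitary potential). This cancellation is the reason for using the Dirichlet potential rather than a Neumann-type harmonic function, and for the specific power $|\nabla w|^{1/2}$ in $f$. After collecting terms we arrive at
\[
N \le \int_0^\infty\Big[\big(s'+\tfrac{s}{2}\big)^2 W - \tfrac{s}{2}\big(s'+\tfrac{s}{2}\big)U + 2\pi s^2\Big]\,dt.
\]

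Using the explicit form of $s$ one checks $s' + s/2 = \sqrt{e^t}/(1+e^{2t})^{3/2} > 0$ and $s'-s = -\sqrt{e^t}(1+3e^{2t})/[2(1+e^{2t})^{3/2}] < 0$. Because $\tfrac{s}{2}(s'+s/2)\ge 0$, inequality (i) ($U \ge 3W-4\pi$) gives the upper bound
\[
N \le \int_0^\infty\Big[(s'+\tfrac{s}{2})(s'-s)\,W + 2\pi s(s'+\tfrac{s}{2}) + 2\pi s^2\Big]\,dt,
\]
and since $(s'+s/2)(s'-s)\le 0$, inequality (iii) ($W\ge 4\pi$) lets us replace $W$ by $4\pi$. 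A short algebraic simplification collapses the integrand to $4\pi s'^2 + \pi s^2$. For the denominator, (iii) gives $D = \int s^6 W\,dt \ge 4\pi\int s^6\,dt$. In the model case $\Omega = B$ one has $W\equiv 4\pi$, $U\equiv 8\pi$, $J\equiv 4\pi$, and $f = (1+r^2)^{-1/2}$ is the optimizer realizing $Q^*(B)$; hence $\int_0^\infty[4\pi s'^2 + \pi s^2]\,dt = Q^*(B)\big(4\pi\int s^6\,dt\big)^{1/3}$. Combining everything yields $N \le Q^*(B)\,D^{1/3}$.

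The main obstacle is the second step: identifying the precise IBP that makes the boundary contribution from the monotone quantity $U(t)$ exactly cancel the Yamabe boundary term. This forces the specific exponent in $f = s(w)|\nabla w|^{1/2}$ and the specific form of $s$, and is the reason the argument works for the Dirichlet capacitary potential but is not obviously available from Escobar's original Neumann-type test function.
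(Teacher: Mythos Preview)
Your proof is correct and follows essentially the same route as the paper: coarea formula for $|\nabla f|^2$, the monotonicity lemma to bound $J(t)$, integration by parts in $t$ producing the boundary term $\tfrac{1}{4}s(0)^2U(0)$ that exactly cancels the Yamabe boundary contribution, and then Miao's inequalities to reduce to the model integrand.

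The one genuine (minor) difference is in which pair of Miao's inequalities you invoke for the numerator. The paper first uses (i) in the form $W\le\tfrac{1}{3}(U+4\pi)$ to eliminate $W$, and then (ii) $U\ge 8\pi$ to close. You instead use (i) in the equivalent form $U\ge 3W-4\pi$ to eliminate $U$, and then (iii) $W\ge 4\pi$ to close. Both routes collapse to the identical integrand $4\pi s'^2+\pi s^2 = \tfrac{2\pi e^t(1+e^{4t})}{(1+e^{2t})^3}$, so the endpoint is the same; your version is marginally more economical in that it never appeals to (ii). Your appeal to the model case to identify the value of the final integral is logically sound (all inequalities are equalities when $\Omega=B$), though the paper simply computes $\int_0^\infty M(t)\,dt=\tfrac{3\pi^2}{8}$ and $4\pi\int_0^\infty s^6\,dt=\tfrac{\pi^2}{8}$ directly.
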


\begin{proof} 
Note that 
\[
\grad f = s'(w) \vert \grad w\vert^{1/2} \grad w + \frac{1}{2} s(w) \frac{1}{\vert \grad w\vert^{1/2}}\grad \vert \grad w\vert
\]
almost everywhere. 
Therefore, with the above dominating ODEs and the co-area formula, we compute
\begin{align*}
    \int_{\mathbb R^3\setminus \Omega}|\nabla f|^2\, dv &=\int_0^{\infty}s'(t)^2W(t)+s(t)s'(t)(W(t)-U(t)) +\frac{1}{4}s(t)^2\int_{\Sigma_t}\frac{|\nabla|\nabla w||^2}{|\nabla w|^2}\, da\, dt \\
    &\le \int_0^{\infty}2\pi s(t)^2+(s'(t)+\frac{1}{2}s(t))^2W(t)-s(t)(s'(t)+\frac{1}{4}s(t))U(t)-\frac{1}{4}s(t)^2U'(t)\, dt. 
    \end{align*}
Since $U$ is absolutely continuous, we can now integrate by parts to get 
\begin{align*} 
    \int_{\mathbb R^3\setminus \Omega}|\nabla f|^2 &=\frac{1}{4}s(0)^2U(0)+\int_0^{\infty}2\pi s(t)^2+(s'(t)+\frac{1}{2}s(t))^2W(t)-\frac{1}{2}s(t)s'(t)U(t)-\frac{1}{4}s(t)^2U(t)\, dt \\
    &=\frac{1}{4}s(0)^2U(0)+\int_0^{\infty}2\pi s(t)^2+(s'(t)+\frac{1}{2}s(t))\left[(s'(t)+\frac{1}{2}s(t))W(t)-\frac{1}{2}s(t)U(t)\, dt\right].
\end{align*}
Recall Miao's results \cite{miao2023mass}:
\[
3W(t)\leq U(t)+4\pi,\quad U(t)\geq 8\pi. 
\]
Both inequalities achieve equality when $\Omega$ is the unit ball. 
We compute that
\begin{align}
    (s'(t)+\frac{1}{2}s(t))W(t)-\frac{1}{2}s(t)U(t)&=\frac{e^{t/2}}{(1+e^{2t})^{3/2}}W(t)-\frac{e^{t/2}}{2(1+e^{2t})^{1/2}}U(t)\\
    &\leq \frac{e^{t/2}}{3(1+e^{2t})^{3/2}}U(t)-\frac{e^{t/2}}{2(1+e^{2t})^{1/2}}U(t)+\frac{4\pi e^{t/2}}{3(1+e^{2t})^{3/2}}\\
    &=\frac{e^{t/2}}{(1+e^{2t})^{1/2}}\cdot\frac{-(1+3e^{2t})}{6(1+e^{2t})}U(t)+\frac{4\pi e^{t/2}}{3(1+e^{2t})^{3/2}} \label{l1}\\
    &\leq -\frac{4\pi e^{5/2t}}{(1+e^{2t})^{3/2}}.
    \end{align}
In the last step, we used Miao's inequality \cite{miao2023mass} and the fact that $-3e^{2t}-1\leq 0$ . With the above estimates, and the fact that 
\[
\frac{1}{4}s(0)^2 U(0) = \frac 1 4 \int_{\bd \Omega} Hf^2\, da,
\]
we bound $\int_{\mathbb R^3\setminus\Omega}|\nabla f|^2\, dv-\frac{1}{4}\int_{\partial\Omega}Hf^2\, da$ from above by $\int_0^{\infty}M(t)\,dt$, where
\[
M(t):=\frac{2\pi e^{t}(1+e^{4t})}{(1+e^{2t})^3}, \quad \text{and} \quad 
\int_0^{\infty}M(t)\,dt=\frac{3\pi^2}{8}.
\]
We also have that
\begin{align*}
    \int_{\mathbb R^3\setminus \Omega}f^6\, dv&=\int_0^{\infty}s(t)^6W(t)\, dt \geq \int_0^{\infty}4\pi  s(t)^6 \,dt = \frac{\pi^2}{8}.
\end{align*}
Here, we used Miao's \cite{miao2023mass} estimate $
W(t)\geq 4\pi$. 
Thus, we obtain
\begin{align*}
    \frac{\int_{\mathbb R^3\setminus \Omega}|\nabla f|^2dv-\frac{1}{4}\int_{\partial\Omega}Hf^2da}{\left(\int_{\mathbb R^3\setminus \Omega}f^6 dv\right)^{1/3}}\leq \frac{\int_0^{\infty}M(t)\, dt}{\left(4\pi\int_0^{\infty}s(t)^6\right)^{1/3}}= \frac{3}{4}\pi^{4/3} = Q^*(B),
\end{align*}
as desired. 
\end{proof} 

Finally, to be complete, we should check that $f$ is actually an admissible test function for the Yamabe quotient.

\begin{prop}
    The function $f$ belongs to $W^{1,2}(\R^3\setminus \Omega)$.
\end{prop}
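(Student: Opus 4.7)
The plan is to verify three conditions: that the trace of $f$ on $\partial\Omega$ is well-defined so the boundary integral $\int_{\partial\Omega}Hf^2\,da$ is finite, that $f\in L^6(\R^3\setminus\Omega)$, and that $\nabla f\in L^2(\R^3\setminus\Omega)$. The trace is the easiest: by elliptic regularity $u\in C^\infty(\cl{\R^3\setminus\Omega})$, and since $u$ attains its maximum $1$ on $\partial\Omega$, the Hopf lemma gives $|\nabla u|\ge c_0>0$ on $\partial\Omega$. Since $u\equiv 1$ there, $|\nabla w|=|\nabla u|$ is uniformly bounded below on some tubular neighborhood of $\partial\Omega$, so $f=s(w)|\nabla w|^{1/2}$ is smooth in that neighborhood and the trace on the compact smooth surface $\partial\Omega$ is classical.

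For the $L^6$ integrability, I would use the coarea identity $\int_{\R^3\setminus\Omega} f^6\,dv=\int_0^\infty s(t)^6 W(t)\,dt$ already recorded in the proof of Proposition~\ref{f-quotient}. At $t=0$ the quantity $W(0)=\int_{\partial\Omega}|\nabla u|^2$ is finite. The ODE $W'=2W-U$ together with Miao's lower bound $U\ge 8\pi$ gives the crude bound $W(t)\le 4\pi+(W(0)-4\pi)e^{2t}$, and since $s(t)^6$ decays like $e^{-3t}$ as $t\to\infty$, the $t$-integral converges. As an independent check, capacitary asymptotics $u(x)\sim c/|x|$ and $|\nabla u(x)|\sim c/|x|^2$ at infinity yield $f(x)=O(|x|^{-1})$, which is comfortably in $L^6$ outside a large ball.

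The main obstacle is $\nabla f\in L^2$, because the formal expression
\[
\nabla f=s'(w)|\nabla w|^{1/2}\nabla w+\tfrac{1}{2}s(w)|\nabla w|^{-1/2}\nabla|\nabla w|
\]
contains a $|\nabla w|^{-1/2}$ factor that blows up on the critical set of $w$. I would restrict to the open set $\{|\nabla w|>\varepsilon\}$, where $f$ is smooth, and apply the coarea computation from the proof of Proposition~\ref{f-quotient}; combined with the integration-by-parts argument and Miao's monotonicity formulas, this bounds $\int_{\{|\nabla w|>\varepsilon\}}|\nabla f|^2\,dv$ uniformly in $\varepsilon$ by $\tfrac{3\pi^2}{8}+\tfrac{1}{4}\int_{\partial\Omega}Hf^2\,da$. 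Because $u$ is real-analytic and harmonic in the open exterior, its critical set is a rectifiable set of Hausdorff dimension at most $1$ in $\R^3$, hence has $W^{1,2}$-capacity zero and is removable; passing $\varepsilon\to 0^+$ yields a global weak gradient of $f$ in $L^2$. The decay $|\nabla u(x)|=O(|x|^{-2})$ at infinity gives $|\nabla f(x)|=O(|x|^{-2})$, so the integral is finite near infinity as well.
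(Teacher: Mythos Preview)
Your outline is essentially correct but handles the central difficulty---showing that the pointwise expression $s'(w)|\nabla w|^{1/2}\nabla w + \tfrac12 s(w)|\nabla w|^{-1/2}\nabla|\nabla w|$ is the \emph{weak} gradient of $f$---by a different route from the paper. The paper does not appeal to the structure of the critical set or to capacity theory; instead it introduces the smooth regularization $f_\eps = s(w)\bigl(|\nabla w|^2+\eps^2\bigr)^{1/4}$ and proves $\nabla f_\eps \to \nabla f$ in $L^2_{\mathrm{loc}}$ by a combination of dominated and monotone convergence, using only that $\{\nabla w=0\}$ is Lebesgue-null (which holds for any nonconstant harmonic function). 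Your removability argument (the critical set of a nonconstant harmonic function in $\R^3$ has Hausdorff dimension at most $1$, hence zero $2$-capacity, hence is removable for $W^{1,2}$) is valid but imports heavier machinery where the paper gets by with an explicit one-line mollification; on the other hand your approach would transfer more readily to settings where such a regularization is awkward. One technical wrinkle in your plan: truncating to $\{|\nabla w|>\eps\}$ and \emph{then} rerunning the coarea and integration-by-parts argument of Proposition~\ref{f-quotient} would introduce boundary terms on the cut level sets $\Sigma_t\cap\{|\nabla w|>\eps\}$, and Gauss--Bonnet no longer yields $4\pi$ cleanly; it is simpler to cite Proposition~\ref{f-quotient} directly for the $L^2$ bound on the pointwise gradient (the coarea formula already handles critical values since the critical set is null) and only afterwards invoke removability. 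Finally, you verify $f\in L^6$ whereas the paper asserts $f\in L^2$; since $f(x)\sim |x|^{-1}$ at infinity, your $L^6$ check together with $\nabla f\in L^2$ is in fact what is genuinely required for $f$ to be an admissible test function in the Yamabe quotient.
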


\begin{proof}
    It follows from Proposition \ref{f-quotient} that $\grad f \in L^2(\R^3\setminus \Omega)$. Moreover, it is straightforward to check that $f \in L^2(\R^3\setminus \Omega)$ using the asymptotic expansion of the harmonic function $u$ near infinity. It remains to show that $\grad f$ is a weak derivative for $f$. 

    We will repeatedly use the fact that $\grad w\neq 0$ almost everywhere since $w = -\log u$ and $u$ is a non-constant harmonic function. Let $\phi \in C^\infty_c(\R^3 \setminus \cl \Omega,\R^3)$ be a smooth vector field with compact support.  For each number $\eps > 0$, define the function 
    \[
    f_\eps = s(w) \left( \vert \grad w\vert^2 + \eps^2\right)^{1/4}. 
    \]
    Then $f_\eps$ is smooth and so 
    \[
    \int_{\R^3\setminus \Omega} f_\eps \div(\phi)\, dv = - \int_{\R^3\setminus \Omega} \la \grad f_\eps,\phi \ra \,dv. 
    \]
    Note that $f_\eps \to f$ almost everywhere. 
    Therefore, we have 
    \[
    \int_{\R^3\setminus \Omega} f_\eps \div(\phi)\,dv \to \int_{\R^3\setminus \Omega} f\div(\phi)\, dv 
    \]
    by dominated convergence since $\phi$ has compact support. 

Next, observe that 
    \[
    \left\vert \int_{\R^3\setminus \Omega}  \la \grad f_\eps - \grad f, \phi\ra \,dv \right\vert  \le \|\phi\|_{L^2(\R^3\setminus \Omega)} \| \grad f_\eps - \grad f\|_{L^2(\supp (\phi))}. 
    \]
    Therefore, to complete the proof, it suffices to show that $\grad f_\eps \to \grad f$ in $L^2(K)$ for each compact set $K\subset \R^3\setminus \cl \Omega$. Observe that 
    \begin{gather*}
        \grad f = s'(w) \vert \grad w\vert^{1/2} \grad w + \frac{1}{2} s(w) \frac{1}{\vert \grad w\vert^{1/2}}\grad \vert \grad w\vert\\
        \grad f_\eps = s'(w) \left(\vert \grad w\vert^2 + \eps^2\right)^{1/4} \grad w + \frac{1}{2} s(w) \frac{\vert \grad w\vert}{\left(\vert \grad w\vert^2 + \eps^2\right)^{3/4}} \grad \vert \grad w\vert,
    \end{gather*}
    at all points where $\grad w \neq 0$. In particular, since $\grad w\neq 0$ on a set of full measure, it follows that $\grad f_\eps \to \grad f$ almost everywhere. Next, note that 
    \begin{align*}
        \vert \grad f_\eps\vert^2 &= s'(w)^2 \left(\vert \grad w\vert^2 + \eps^2\right)^{1/2} \vert \grad w\vert^2 + s(w)s'(w) \frac{\vert \grad w\vert}{\left(\vert \grad w\vert^2 + \eps^2\right)^{1/2}} \la \grad \vert \grad w\vert, \grad w\ra \\
        &\qquad \qquad + \frac 1 4 s(w)^2 \frac{\vert \grad w\vert^2}{\left(\vert \grad w\vert^2 + \eps^2\right)^{3/2}} \vert \grad \vert \grad w\vert\vert^2    
    \end{align*}
    almost everywhere. We have 
    \begin{align*}
        &\int_K s'(w)^2 \left(\vert \grad w\vert^2 + \eps^2\right)^{1/2} \vert \grad w\vert^2 + s(w)s'(w) \frac{\vert \grad w\vert}{\left(\vert \grad w\vert^2 + \eps^2\right)^{1/2}} \la \grad \vert \grad w\vert, \grad w\ra \, dv\\
        &\qquad \to \int_K s'(w)^2  \vert \grad w\vert^3 + s(w)s'(w)  \la \grad \vert \grad w\vert, \grad w\ra\, dv
    \end{align*}
    by dominated convergence since $K$ is compact. Finally, observe that 
    \[
    \int_K \frac 1 4 s(w)^2 \frac{\vert \grad w\vert^2}{\left(\vert \grad w\vert^2 + \eps^2\right)^{3/2}} \vert \grad \vert \grad w\vert\vert^2 \, dv \to \int_K \frac{1}{4}s(w)^2 \frac{\vert\grad \vert \grad w\vert\vert^2}{\vert \grad w\vert}\, dv
    \]
    by monotone convergence. It follows that 
    \[
    \int_K \vert \grad f_\eps\vert^2 \, dv \to \int_K \vert \grad f\vert^2 \, dv.
    \]
    Thus $\grad f_\eps,\grad f\in L^2(K)$ and $\grad f_\eps \to \grad f$ almost everywhere and $\| \grad f_\eps\|_{L^2(K)} \to \|\grad f\|_{L^2(K)}$. It is well-known that this implies that $\grad f_\eps \to \grad f$ in $L^2(K)$, as needed. 
\end{proof}

\subsection{Stability} Next, we refine Proposition \ref{f-quotient} so as to get an estimate on the difference $Q^*(B) - Q^*(\Omega)$. To do this, we need to keep track of an extra term in Miao's monotonicity formula:

\begin{prop}\label{prop:Ptestimate}
    We have 
    \[
    U(t)\ge 8\pi+\frac{1}{2}e^t\int_t^{\infty}e^{-\tau}\int_{\Sigma_{\tau}}\|\mathring{A}\|^2da.
    \]
\end{prop}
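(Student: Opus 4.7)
The plan is to sharpen the identity
\[
U'(t) = \int_{\Sigma_t}\Big[-\|A\|^2 + H|\nabla w| - \frac{|\nabla^\Sigma |\nabla w||^2}{|\nabla w|^2}\Big]\, da,
\]
which was derived in the proof of the first lemma of this section, by keeping the traceless part of the second fundamental form explicit, and then to integrate the resulting differential inequality with an exponential weight.

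First I would substitute the orthogonal decomposition $\|A\|^2 = \tfrac{1}{2}H^2 + \|\mathring A\|^2$ in order to isolate a clean $-\int_{\Sigma_t}\|\mathring A\|^2$ contribution. Applying Cauchy--Schwarz in the form $U(t)^2 \le W(t)\int_{\Sigma_t}H^2\, da$ together with Miao's estimate $3W(t) \le U(t) + 4\pi$ from part (i) of the preceding proposition gives $\int_{\Sigma_t}H^2\, da \ge 3U(t)^2/(U(t)+4\pi)$. Dropping the non-positive gradient term then yields, after a short algebraic simplification,
\[
U'(t) \le \frac{U(t)(8\pi - U(t))}{2(U(t)+4\pi)} - \int_{\Sigma_t}\|\mathring A\|^2\, da.
\]
Miao's inequality $U(t) \ge 8\pi$ from part (ii) renders the first term on the right non-positive, leaving the clean a.e.\ differential inequality $U'(t) \le -\int_{\Sigma_t}\|\mathring A\|^2\, da$.

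The remaining task is to convert this into the stated form. Setting $F(t) := U(t) - 8\pi \ge 0$, I would introduce the auxiliary function
\[
G(t) := e^{-t}F(t) - \frac{1}{2}\int_t^\infty e^{-\tau}\int_{\Sigma_\tau}\|\mathring A\|^2\, da\, d\tau
\]
and show $G(t) \ge 0$, which is exactly the desired inequality. A direct computation gives $G'(t) = e^{-t}\big[F'(t) - F(t) + \tfrac{1}{2}\int_{\Sigma_t}\|\mathring A\|^2\, da\big]$; using $F \ge 0$ together with $F' \le -\int_{\Sigma_t}\|\mathring A\|^2\, da$ makes the bracketed quantity at most $-F(t) - \tfrac{1}{2}\int_{\Sigma_t}\|\mathring A\|^2\, da \le 0$, so $G$ is non-increasing. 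Both terms in $G(t)$ tend to $0$ as $t\to\infty$: the first because $F$ is non-increasing and bounded, and the second because integrating $F' \le -\int\|\mathring A\|^2$ from $0$ to $\infty$ yields $\int_0^\infty \int_{\Sigma_\tau}\|\mathring A\|^2\, d\tau \le F(0) < \infty$. Hence $G(\infty)=0$ and monotonicity forces $G(t) \ge 0$ throughout.

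The main technical obstacle is not any single hard estimate but rather ensuring everything is valid at the level of absolute continuity: the differential inequality holds only at regular values of $w$ (a full-measure set by Sard), and the absolute continuity of $U$ established in the preceding proposition is what allows us to integrate. A secondary point is justifying $F(t) \to 0$ and $e^{-t}F(t) \to 0$ as $t \to \infty$; this follows from the asymptotics of the capacitary potential $u \sim \cp(\Omega)/|x|$ at infinity, which forces $\Sigma_t$ to be approximately a round sphere of radius $\sim \cp(\Omega) e^t$ and $U(t) \to 8\pi$.
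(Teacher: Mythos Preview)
Your argument is correct, but it takes a genuinely different route from the paper's.

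The paper argues directly on each level set: it splits $\|A\|^2 = \|\mathring A\|^2 + \tfrac{1}{2}H^2$, uses Gauss--Bonnet to absorb half of the $\|\mathring A\|^2$ into $4\pi - \tfrac{1}{4}\int H^2$, and then invokes the Willmore inequality $\int_{\Sigma_t} H^2 \ge 16\pi$ to arrive at
\[
U'(t) \le U(t) - 8\pi - \tfrac{1}{2}\int_{\Sigma_t}\|\mathring A\|^2\, da.
\]
This integrates immediately with the factor $e^{-t}$ and the known asymptotic $U(t)\to 8\pi$.

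You instead bound $\int H^2$ from below via Cauchy--Schwarz and Miao's inequality $3W\le U+4\pi$, obtaining $\int H^2 \ge 3U^2/(U+4\pi)$; together with $U\ge 8\pi$ this yields the sharper inequality $U'(t) \le -\int_{\Sigma_t}\|\mathring A\|^2$. You then recover the stated bound by combining this with $F = U-8\pi \ge 0$ in the integration step. This avoids any direct appeal to Gauss--Bonnet or the Willmore inequality (those are of course hidden inside Miao's estimates), and as a byproduct gives the extra information that $U$ is non-increasing. The paper's route is more self-contained at this point in the argument, while yours leans more heavily on the previously recorded Miao inequalities; both are clean and of comparable length.

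One small wording issue: you justify $e^{-t}F(t)\to 0$ by saying ``$F$ is non-increasing and bounded.'' That alone would only give $F$ a finite limit, but since $e^{-t}\to 0$ and $F$ is bounded, the product certainly tends to zero; alternatively the paper simply cites $\lim_{t\to\infty} U(t)=8\pi$. Either way the conclusion stands.
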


\begin{proof}
    Suppose $t$ is a regular value of $w$. Then we can compute
    \begin{align*}
        U'(t)&=\int_{\Sigma_t}H^2+H\frac{\langle \nabla|\nabla w|,\nabla w\rangle}{|\nabla w|^2}-|\nabla w|\left(\Delta_{\Sigma_t}\frac{1}{|\nabla w|}+(\|A\|^2+Ric(\nu_t,\nu_t)\frac{1}{|\nabla w|})\right)\\
        &\leq 4\pi+U(t)-\int_{\Sigma_t}\frac{3}{4}H^2da-\int_{\Sigma_t}\frac{1}{2}\|\mathring{A}\|^2da\\
        &\leq -8\pi+U(t)-\int_{\Sigma_t}\frac{1}{2}\|\mathring{A}\|^2da.
    \end{align*}
    We used the Gauss-Bonnet formula and the Willmore energy estimate of closed connected surfaces in Euclidean space.
    Note that 
    \[
    \lim_{t\to\infty}U(t)=8\pi,
    \]
    so with standard ODE argument we conclude the results.
\end{proof}
\begin{rem}
    Since we have shown the absolute continuity of $U(t)$, we do not need to worry about the critical points.
\end{rem}

\begin{prop}
\label{level-set-estimate}
    We have an estimate 
    \[
    \int_0^\infty b(\tau) \int_{\Sigma_\tau} \| \mathring A\|^2\, da \, d\tau \le Q^*(B) - Q^*(\Omega). 
    \]
    Here $b$ is a smooth function with $b(\tau) > 0$ for $\tau > 0$.  
\end{prop}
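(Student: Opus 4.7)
The plan is to re-run the calculation from the proof of Proposition \ref{f-quotient}, but using the sharpened version of Miao's inequality from Proposition \ref{prop:Ptestimate} instead of the bare lower bound $U(t) \ge 8\pi$. In that earlier proof, combining Miao's estimate $3W(t) \le U(t) + 4\pi$ with the positivity of $s'(t)+\tfrac{1}{2}s(t) = \frac{e^{t/2}}{(1+e^{2t})^{3/2}}$ produced the inequality
\[
(s'(t)+\tfrac{1}{2}s(t))W(t) - \tfrac{1}{2}s(t)U(t) \le \frac{e^{t/2}}{(1+e^{2t})^{1/2}}\cdot\frac{-(1+3e^{2t})}{6(1+e^{2t})}U(t) + \frac{4\pi e^{t/2}}{3(1+e^{2t})^{3/2}}.
\]
Because the coefficient of $U(t)$ on the right is strictly negative, substituting the stronger lower bound $U(t) \ge 8\pi + R(t)$ with $R(t) := \tfrac{1}{2}e^t\int_t^\infty e^{-\tau}\int_{\Sigma_\tau}\|\mathring A\|^2\, da\, d\tau \ge 0$ from Proposition \ref{prop:Ptestimate} introduces an extra negative term $-\frac{e^{t/2}(1+3e^{2t})}{6(1+e^{2t})^{3/2}}R(t)$ beyond the $-\frac{4\pi e^{5t/2}}{(1+e^{2t})^{3/2}}$ used there.

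Multiplying by $(s'+\tfrac{1}{2}s)$, adding $2\pi s^2$, and integrating in $t$ using $\tfrac{1}{4}s(0)^2 U(0) = \tfrac{1}{4}\int_{\bd \Omega}Hf^2\, da$ and $\int_0^\infty M(t)\, dt = \tfrac{3\pi^2}{8}$ upgrades Proposition \ref{f-quotient} to
\[
\int_{\R^3\setminus \Omega}|\nabla f|^2\, dv - \tfrac{1}{4}\int_{\bd \Omega}Hf^2\, da \le \tfrac{3\pi^2}{8} - \int_0^\infty \tfrac{e^t(1+3e^{2t})}{6(1+e^{2t})^3}R(t)\, dt.
\]
Substituting the definition of $R(t)$ and applying Fubini to swap the order of integration rewrites the final term as $\int_0^\infty b_0(\tau)\int_{\Sigma_\tau}\|\mathring A\|^2\, da\, d\tau$, where
\[
b_0(\tau) := e^{-\tau}\int_0^\tau \frac{e^{2t}(1+3e^{2t})}{12(1+e^{2t})^3}\, dt
\]
is smooth and strictly positive on $(0,\infty)$.

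To pass from this refined numerator estimate to the statement of the proposition, we use $\int_{\R^3\setminus\Omega} f^6\, dv \ge \pi^2/8$, inherited from Miao's third inequality $W(t) \ge 4\pi$. Plugging $f$ in as a test function for $Q^*(\Omega)$:
\[
Q^*(\Omega) \le \frac{\int|\nabla f|^2\, dv - \tfrac{1}{4}\int Hf^2\, da}{(\int f^6\, dv)^{1/3}} \le \frac{\tfrac{3\pi^2}{8} - \int_0^\infty b_0(\tau)\int_{\Sigma_\tau}\|\mathring A\|^2\, da\, d\tau}{(\pi^2/8)^{1/3}},
\]
and rearranging, while recognizing $Q^*(B) = \tfrac{3\pi^2/8}{(\pi^2/8)^{1/3}}$, gives $\int b_0(\tau)\int_{\Sigma_\tau}\|\mathring A\|^2\, da\, d\tau \le (\pi^2/8)^{1/3}(Q^*(B) - Q^*(\Omega))$, so that $b := (\pi^2/8)^{-1/3}b_0$ is the desired function.

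The main obstacle is minor: the inequality above requires the numerator $\tfrac{3\pi^2}{8} - \int b_0\int \|\mathring A\|^2$ to be non-negative in order to replace $(\int f^6)^{1/3}$ by its lower bound $(\pi^2/8)^{1/3}$. If this fails, then the test function $f$ already forces $Q^*(\Omega) < 0$, while $Q^*(B) > 0$, so the difference $Q^*(B) - Q^*(\Omega)$ is bounded below by a universal positive constant; an appropriate truncation of $b$ recovers the stated inequality in this regime. Apart from this bookkeeping, the entire proof amounts to tracking the deficit produced by Proposition \ref{prop:Ptestimate} through the chain of inequalities already used in Proposition \ref{f-quotient}.
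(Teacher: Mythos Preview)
Your argument is correct and follows essentially the same route as the paper: re-run the computation of Proposition~\ref{f-quotient} up to line~(\ref{l1}), insert the sharpened lower bound $U(t)\ge 8\pi + \tfrac12 e^t\int_t^\infty e^{-\tau}\int_{\Sigma_\tau}\|\mathring A\|^2$ from Proposition~\ref{prop:Ptestimate}, and then use Fubini to extract an explicit positive weight $b(\tau)$. Your treatment of the sign of the numerator when dividing by the lower bound $(\pi^2/8)^{1/3}$ on $\int f^6$ is a point the paper leaves implicit, so your proof is in fact slightly more careful there.
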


\begin{proof}
We can argue as in Proposition \ref{f-quotient} until line (\ref{l1}). At this point, we then apply the stronger inequality in Proposition \ref{prop:Ptestimate} to get 
\begin{align}
\label{s1}
    (s'(t)+\frac{1}{2}s(t))W(t)-\frac{1}{2}s(t)U(t)&\le -\frac{4\pi e^{5/2t}}{(1+e^{2t})^{3/2}} -\frac{e^{3t/2}(1+3e^{2t})}{12(1+e^{2t})^{3/2}}\int_t^{\infty}e^{-\tau}
    \int_{\Sigma_{\tau}}\|\mathring{A}\|^2da\, d\tau.
\end{align}
We can then give an explicit formula of $b(\tau)$ using Fubini's theorem. Indeed, we have
\begin{align*}
    &\int_0^{\infty}\frac{e^{2t}(1+3e^{2t})}{12(1+e^{2t})^{3}}dt\int_{t}^{\infty}e^{-\tau}\left(\int_{\Sigma_{\tau}}\|\mathring{A}\|^2da\right)d\tau\\
    &\qquad = \int_{0}^{\infty}e^{-\tau}\left(\int_{\Sigma_{\tau}}\|\mathring{A}\|^2da\right)d\tau\int_{0}^{\tau}\frac{e^{2t}(1+3e^{2t})}{12(1+e^{2t})^3}dt\\
    &\qquad = \int_0^{\infty}e^{-\tau}\left[\frac{(3e^{2\tau}+1)^2}{96(e^{2\tau}+1)^2}-\frac{1}{24}\right]\int_{\Sigma_\tau}\|\mathring{A}\|^2da.
\end{align*}
After normalization by the dominator, we obtain that
\[
b(\tau)= e^{-\tau}\left[\frac{(3e^{2\tau}+1)^2}{12\pi^2(e^{2\tau}+1)^2}-\frac{1}{3\pi^2}\right],
\]
is as required.
\end{proof}






\subsection{The Sobolev Quotient} To conclude this section, we show that a similar estimates hold for the Sobolev quotient. We should choose another test function $f_2(w)=s_2(w)|\nabla w|^{1/2}$. Here $s_2(w)=p(e^w)=e^{-w/2}$, and $p(r)$ is the conformal factor from the standard stereographical projection map between the standard round sphere and $\mathbb R^3$. Computation for the numerator is exactly the same if we plugging in $f_2$ instead of $f$. More precisely, we obtain
\begin{align*}
    \int_{\mathbb R^3\setminus\Omega}|\nabla f_2|^2\leq\frac{1}{4}s_2(0)U(0)+\int_0^{\infty}2\pi s_2(t)^2+(s'_2(t)+\frac{1}{2}s_2(t))\left[(s_2'(t)+\frac{1}{2}s_2(t))W(t)-\frac{1}{2}s_2(t)U(t)\right].
\end{align*}
Note that
\[
s_2'(t)+\frac{1}{2}s_2(t)=0.
\]
We easily obtain that
\[
\int_{\mathbb R^3\setminus\Omega}|\nabla f_2|^2-\frac{1}{4}\int_{\partial\Omega}Hf_2^2\leq \int_0^{\infty}2\pi e^{-t}dt=2\pi
\]
As for the denominator, we have
\begin{align*}
    \int_{\partial \Omega}f^4&=\int_{\partial\Omega}s_2^4|\nabla w|^2 =W(0).
\end{align*}
Now, we obtain a sharper estimate for the dominator.

\begin{lem}
    We have the following estimate for $K(0)$,
    \[
    W(0)\geq 4\pi+4\pi\int_0^{\infty}(1-e^{-\tau})e^{-\tau}\left[\int_{\Sigma_{\tau}}\|\mathring{A}\|^2da\right]d\tau
    \]
\end{lem}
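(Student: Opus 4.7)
The plan is to derive an integral representation for $W(0)$ by integrating the ODE $W'(t) = 2W(t) - U(t)$ from the first lemma of the section, and then substitute the lower bound for $U(t)$ supplied by Proposition \ref{prop:Ptestimate}.

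First, I would rewrite the ODE using the integrating factor $e^{-2t}$, so that $(e^{-2t} W(t))' = -e^{-2t} U(t)$. Integrating over $[0, \infty)$ and appealing to the asymptotic expansion of the capacitary potential $u \sim c/r$ at infinity (which gives $W(t) \to 4\pi$ and hence $e^{-2t} W(t) \to 0$), I would obtain the clean identity
\[
W(0) = \int_0^\infty e^{-2t}\, U(t)\, dt.
\]

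Second, I would plug in the lower bound for $U(t)$ from Proposition \ref{prop:Ptestimate}. The constant term $8\pi$ contributes $8\pi \int_0^\infty e^{-2t}\,dt = 4\pi$, which is precisely the leading summand in the stated estimate. For the remainder, write $g(\tau) = \int_{\Sigma_\tau} \|\mathring A\|^2 \,da$ and apply Fubini to swap integration order:
\[
\int_0^\infty e^{-t} \int_t^\infty e^{-\tau} g(\tau)\, d\tau\, dt \;=\; \int_0^\infty e^{-\tau} g(\tau) \int_0^\tau e^{-t}\, dt\, d\tau \;=\; \int_0^\infty (1-e^{-\tau}) e^{-\tau} g(\tau)\, d\tau.
\]
Combining these gives a lower bound of the form $W(0) \geq 4\pi + C \int_0^\infty (1-e^{-\tau}) e^{-\tau} g(\tau)\, d\tau$, matching the structure of the claimed inequality.

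The main obstacle is matching the precise coefficient $4\pi$ in front of the integral. Using Proposition \ref{prop:Ptestimate} as stated produces the weaker constant $C = \tfrac{1}{2}$. To reach the stated $4\pi$, I would need to strengthen Proposition \ref{prop:Ptestimate} by replacing the generic Willmore bound $\int_{\Sigma_t} H^2\, da \geq 16\pi$ used in its proof with the sharp identity $\int_{\Sigma_t} H^2\, da = 16\pi + 2\int_{\Sigma_t} \|\mathring A\|^2\, da$, which holds because $\Sigma_t$ is a topological sphere (using connectedness of $\partial \Omega$ as in \cite{mazurowski2024stability}), together with tracking the full $\|A\|^2 = \tfrac{1}{2}H^2 + \|\mathring A\|^2$ decomposition in the computation of $U'(t)$. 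With the improved ODE $U'(t) \leq -8\pi + U(t) - c \int_{\Sigma_t} \|\mathring A\|^2\, da$ for the correct constant $c$, integrating against $e^{-t}$ and combining with the representation of $W(0)$ above yields the stated coefficient. The remaining steps are routine calculus; the delicate point is that every use of an inequality (Cauchy--Schwarz on the $H|\nabla w|$ cross-term, discarding the non-negative $|\nabla^\Sigma|\nabla w||^2/|\nabla w|^2$ term, etc.) must be audited to avoid losing factors in the constant.
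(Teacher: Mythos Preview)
Your approach is exactly the paper's: rewrite $W'=2W-U$ as $(e^{-2t}W)'=-e^{-2t}U$, integrate to get $W(0)=\int_0^\infty e^{-2t}U(t)\,dt$, substitute Proposition~\ref{prop:Ptestimate}, and swap the order of integration by Fubini. The paper's proof is line-for-line the same.

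You are also right about the constant. The paper's own proof substitutes ``$8\pi+4\pi e^t\int_t^\infty\cdots$'' for $U(t)$, whereas Proposition~\ref{prop:Ptestimate} as stated only gives $8\pi+\tfrac12 e^t\int_t^\infty\cdots$; this appears to be a typo in the paper, not a gap in your reasoning. Your proposed strengthening (replacing the Willmore lower bound $\int H^2\ge 16\pi$ by the Gauss--Bonnet identity $\int H^2=16\pi+2\int\|\mathring A\|^2$ for a topological sphere) does improve the constant in Proposition~\ref{prop:Ptestimate} from $\tfrac12$ to $2$, but still does not recover $4\pi$; no further sharpening of the $U'$ inequality will produce a factor of $4\pi$ here. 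Since the lemma is only used qualitatively (one just needs $b_2(\tau)=(1-e^{-\tau})e^{-\tau}>0$ for $\tau>0$), the precise constant is immaterial for the applications.
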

\begin{proof}
    Suppose $t$ is a regular value of $w$. We have 
    \begin{align*}
        W'(t)=2W(t)-U(t).
    \end{align*}
    With standard ODE techniques, we obtain
    \begin{align*}
        W(0)&=\int_0^{\infty}e^{-2t}U(t)\\
            &\geq\int_0^{\infty}e^{-2t}\left[8\pi+4\pi e^t\int_t^{\infty}e^{-\tau}\int_{\Sigma_{\tau}}\|\mathring{A}\|^2\right]dt\\
            &=4\pi+4\pi\int_0^{\infty}e^{-t}dt\int_{t}^{\infty}e^{-\tau}\left[\int_{\Sigma_{\tau}}\|\mathring{A}\|^2 da\right]d\tau\\
            &=4\pi+4\pi\int_0^{\infty}(1-e^{-\tau})e^{-\tau}\left[\int_{\Sigma_{\tau}}\|\mathring{A}\|^2da\right]d\tau.
    \end{align*}
    We used Proposition \ref{prop:Ptestimate} and Fubini's theorem in the above computations.
\end{proof}

We define $
b_2(\tau)=(1-e^{-\tau})e^{-\tau}$. 
We can then obtain the following sharper estimate of the Sobolev Quotient.

\begin{prop} \label{sobolev-quotient-est} Assume that $\Omega$ is a smooth, bounded domain in $\R^3$ with connected boundary. If $Q^{\bd, *}(\Omega)$ is sufficiently close to $Q^{\bd,*}(B)$ then we have an estimate 
\[
Q^{\partial,*}(\Omega)\leq \frac{2^{\frac{1}{2}}\pi^{\frac{3}{4}}}{\left(1+\int_0^{\infty}b_2(\tau)\int_{\Sigma_{\tau}}\|\mathring{A}\|^2da\right)^{\frac{1}{4}}}\leq Q^{\partial,*}(B)\left(1-\frac{1}{8}\int_0^{\infty}\int_{\Sigma_{\tau}}\|\mathring{A}\|^2da\right).
\]
\end{prop}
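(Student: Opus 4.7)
My plan is to follow almost verbatim the scheme developed for $Q^*$ in the previous subsection, replacing the test function by the conformal factor $f_2 = s_2(w)|\nabla w|^{1/2}$ with $s_2(t) = e^{-t/2}$, which is the conformal factor of stereographic projection that models the optimal test function when $\Omega = B$ is a unit ball. The first step is to control the numerator $\int_{\mathbb{R}^3\setminus\Omega}|\nabla f_2|^2\,dv - \tfrac14 \int_{\partial\Omega} H f_2^2\,da$ by running exactly the calculation of Proposition \ref{f-quotient} up to the point where we arrive at an integral of $(s'+\tfrac12 s)[(s'+\tfrac12 s)W - \tfrac12 s U]$. The decisive miracle for the Sobolev problem is that $s_2$ is designed to satisfy $s_2' + \tfrac12 s_2 \equiv 0$, so this entire bracket vanishes. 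The boundary term $\tfrac14 s_2(0)^2 U(0) = \tfrac14 \int_{\partial\Omega} H f_2^2\,da$ cancels the $H$-integral, and what remains is just $\int_0^\infty 2\pi s_2(t)^2\,dt = 2\pi\int_0^\infty e^{-t}\,dt = 2\pi$.

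Second, I would estimate the denominator $\int_{\partial\Omega}f_2^4\,da$ from below. By the definition of $f_2$ and since $s_2(0) = 1$, this equals $W(0)$, and the lemma immediately above provides the sharp lower bound
\[
W(0) \;\ge\; 4\pi\Bigl(1 + \int_0^\infty b_2(\tau) \int_{\Sigma_\tau} \|\mathring A\|^2\,da\,d\tau\Bigr),
\]
which in turn rests on Miao's Willmore-type refinement in Proposition \ref{prop:Ptestimate} and Fubini. Combining the numerator and denominator estimates gives
\[
Q^{\partial,*}(\Omega) \;\le\; \frac{2\pi}{W(0)^{1/4}} \;\le\; \frac{2\pi}{(4\pi)^{1/4}\bigl(1+\int b_2 \|\mathring A\|^2\bigr)^{1/4}} \;=\; \frac{2^{1/2}\pi^{3/4}}{\bigl(1+\int_0^\infty b_2(\tau)\int_{\Sigma_\tau}\|\mathring A\|^2\,da\bigr)^{1/4}},
\]
which is the first of the two stated inequalities and already identifies the constant $2^{1/2}\pi^{3/4} = Q^{\partial,*}(B)$.

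Finally, for the second inequality I would invoke the hypothesis that $Q^{\partial,*}(\Omega)$ is close to $Q^{\partial,*}(B)$. This closeness, together with the first inequality above, forces the weighted integral $\int b_2(\tau)I(\tau)\,d\tau$ (and hence, via the stability machinery analogous to Proposition \ref{prelim-stability}, the ``deficit'' in $\|\mathring A\|^2$ on the relevant level sets) to be small. Once we are in this small-deficit regime, the elementary Taylor estimate $(1+x)^{-1/4} \le 1 - cx$ holds for appropriate $c>0$, and the desired conclusion follows by relating $\int b_2 I$ to $\int I$. The main obstacle I anticipate here is quantitative: since $b_2(\tau) = (1-e^{-\tau})e^{-\tau}$ is bounded above by $\tfrac14$ (attained at $\tau = \ln 2$) and decays to $0$ at both endpoints, a naive comparison gives $\int b_2 I \le \tfrac14 \int I$, which points the wrong way for producing a coefficient of $\tfrac18$ in front of $\int \|\mathring A\|^2$. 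Delivering the clean form $Q^{\partial,*}(B)(1 - \tfrac18 \int \|\mathring A\|^2)$ therefore requires either exploiting the Taylor expansion $(1+x)^{-1/4} = 1 - x/4 + O(x^2)$ more carefully, or equivalently absorbing $O(x^2)$ errors using the smallness provided by the closeness hypothesis; this is the delicate step of the proof.
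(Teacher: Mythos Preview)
Your plan is exactly the paper's approach: the numerator bound via $s_2'+\tfrac12 s_2\equiv 0$, the identification $\int_{\partial\Omega} f_2^4 = W(0)$, and the lemma $W(0)\ge 4\pi(1+\int b_2 I)$ are precisely the ingredients the paper assembles just before stating the proposition (the paper gives no further proof). Your worry about the second inequality is well founded: as printed, the final integral is missing the weight $b_2(\tau)$ (note also the missing $d\tau$), and with $b_2$ reinstated the bound $(1+x)^{-1/4}\le 1-\tfrac{x}{8}$ for small $x$ gives the conclusion immediately from the closeness hypothesis; without $b_2$ the inequality is false in general, since $b_2\le \tfrac14$.
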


\section{Main Results}

\label{Section:Stability}

In this section, we prove the main theorems.  Given a bounded domain $D\subset \R^3$ with smooth boundary, we let 
\[
\cp(D) = \inf \left\{\int_{\R^3} \vert \grad u\vert^2\, dv: u\in W^{1,2}(\R^3),\ u\ge 1 \text{ on } D\right\}
\]
denote the capacity of $D$. First we need an elementary result, characterizing subsets of a ball with nearly the same capacity as the ball.

\begin{prop}
\label{capacity-est}
    There is a constant $c > 0$ so that if $\Omega\subset B(x,r)$ is a smooth domain with 
    \[
    \frac{\cp(\Omega)}{\cp(B(x,r))}\ge 1 - c\eps^3 
    \]
    then $\bd B(x,r)$ is contained in the $\eps r$-neighborhood of $\bd \Omega$. 
\end{prop}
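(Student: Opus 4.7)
The plan is to argue by contrapositive, so I suppose that some $p \in \bd B(x,r)$ satisfies $d(p, \bd \Omega) \ge \eps r$. Because both the capacity ratio $\cp(\Omega)/\cp(B(x,r))$ and the conclusion are invariant under homothety (capacity in $\R^3$ is $1$-homogeneous), I may rescale to assume $r=1$ and $B(x,r)=B$, the unit ball. Then $B(p,\eps)$ is disjoint from $\bd \Omega$; since $B(p,\eps)$ is connected and $p\in \bd B$ lies outside $\Omega \subset B$, the whole ball $B(p,\eps)$ must lie in $\R^3\setminus \cl \Omega$. Hence $\Omega \subset B\setminus B(p,\eps)$, and by the monotonicity of capacity the task reduces to proving a universal defect
\[
\cp(B\setminus B(p,\eps)) \le 4\pi - c \eps^3.
\]

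To obtain such a defect I would take the capacitary potential $u(y) = \min(1, 1/|y|)$ of $B$ and build a competitor $v$ for $B\setminus B(p,\eps)$ by replacing $u$ inside $B(p,\eps)$ with the harmonic extension of its own boundary values on $\bd B(p,\eps)$. Since $v\equiv 1$ on $B\setminus B(p,\eps)$, $v$ is admissible, and the Dirichlet principle gives
\[
\int_{\R^3}|\grad u|^2 - \int_{\R^3}|\grad v|^2 = \int_{B(p,\eps)}|\grad (u-v)|^2.
\]
The distributional Laplacian of $u$ inside $B(p,\eps)$ is minus the surface measure of $\bd B\cap B(p,\eps)$ (the normal derivative of $1/|y|$ has unit jump across $\bd B$), so integrating by parts against $u-v$, which vanishes on $\bd B(p,\eps)$, gives the clean identity
\[
\int_{B(p,\eps)}|\grad (u-v)|^2 = \int_{\bd B\cap B(p,\eps)} (1-v)\, d\sigma.
\]

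The main obstacle, and the step I would spend most effort on, is the matching lower bound $\int_{\bd B\cap B(p,\eps)}(1-v)\,d\sigma \ge c\eps^3$. For this I would blow up, setting $V(\omega) := v(p+\eps\omega)$ on $B(0,1)$. Then $V$ is harmonic on $B(0,1)$, and the expansion $|p+\eps\omega| = 1 + \eps\la p,\omega\ra + O(\eps^2)$ gives boundary values $V(\omega) = 1 - \eps \max(0,\la p,\omega\ra) + O(\eps^2)$ on $S^2$. Writing $V = 1 + \eps V_1 + O(\eps^2)$, the leading correction $V_1$ is the universal (up to rotation) harmonic function on $B(0,1)$ with boundary values $-\max(0,\la p,\omega\ra)$, and the mean value identity gives $V_1(0) = -1/4$. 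The strong maximum principle then yields $V_1 \le -1/8$ on a fixed neighborhood of $0$, which translates back to $1-v \ge c\eps$ on a portion of $\bd B \cap B(p,\eps)$ of area comparable to $\eps^2$, producing the desired $c\eps^3$ lower bound.

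Combining the two identities yields $\cp(\Omega)\le \cp(B\setminus B(p,\eps))\le 4\pi(1-c\eps^3)$, contradicting the hypothesis once the constant in the statement is chosen small enough. The blow-up step is where the argument is most delicate: a naive application of interior harmonic gradient estimates to $v$ only controls $v$ on balls of radius $\eps^2$ about $p$ and would sacrifice a factor of $\eps^2$ in the final estimate, so I would emphasize that the correct order $\eps^3$ comes from identifying the universal leading-order profile $V_1$ and using the strong maximum principle on the rescaled ball.
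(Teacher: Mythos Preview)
Your argument is correct and takes a genuinely different route from the paper. Both proofs argue by contrapositive and reduce, via scaling and monotonicity, to showing $\cp(B\setminus B(p,\eps)) \le 4\pi - c\eps^3$. From there the approaches diverge. The paper writes down an explicit competitor: it keeps $1/|y|$ outside $B(0,1)\cup B(p,\eps)$ and, inside $B(p,\eps)$, uses a function that is \emph{radial about $p$}, interpolating linearly in $\rho=|y-p|$ between the value $1-\eps/\pi$ at $p$ and the prescribed boundary data on $\bd B(p,\eps)$. The Dirichlet energy of this competitor is then computed by hand in spherical coordinates, and the $\eps^3$ defect drops out of a Taylor expansion; the constant one obtains is explicit (proportional to $\frac{1}{\pi}-\frac{2}{\pi^2}$).

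Your approach instead takes the \emph{optimal} modification inside $B(p,\eps)$, namely the harmonic extension $v$ of $u$'s own boundary values, and exploits the Dirichlet principle to get the exact identity $\int|\nabla u|^2-\int|\nabla v|^2=\int_{\bd B\cap B(p,\eps)}(1-v)\,d\sigma$. The $\eps^3$ scaling then becomes transparent: $1-v\sim\eps$ on a cap of area $\sim\eps^2$, and your blow-up to the universal profile $V_1$ (with $V_1(0)=-1/4$) makes this precise. This is more conceptual and avoids the lengthy spherical-coordinate computation, at the cost of a non-explicit constant and a little care in justifying the expansion $V=1+\eps V_1+O(\eps^2)$ uniformly (the boundary data differ from $1-\eps\max(0,\langle p,\omega\rangle)$ only by $O(\eps^2)$ in $L^\infty(S^2)$, so the maximum principle handles this). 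Both approaches are valid; yours explains \emph{why} the exponent is $3$, while the paper's explicit competitor gives a concrete value of $c$.
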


\begin{proof}
    The statement is scale invariant and translation invariant so we may as well assume that $B(x,r) = B(0,1)$. We will prove the contrapositive. Fix some $\eps > 0$. Consider $\Omega \subset B(0,1)$ and assume there is a point $p\in \bd B(0,1)$ such that $\bd \Omega \cap B(p,\eps) = \emptyset$. After rotation, we may assume that $p = (0,0,-1)$. Let $B_1 = B(0,1)$ and let $B_2 = B(p,\eps)$. Then $\Omega \subset \Omega_\eps$ for $\Omega_\eps = B_1\setminus B_2$. 
    Consequently, it suffices to show that $\cp(\Omega_\eps) \le 1 - c\eps^3$. 

    It is enough to find a good test function. We construct a test function $f$ as follows. First, let $f = 1/r$ in the complement of $B_1 \cup B_2$. Write $\bd B_2 = \Sigma \cup \Gamma$ where $\Sigma = B_1 \cap \bd B_2$ and $\Gamma = (\bd B_2)\setminus B_1$. In $B_2$, we let $f$ be the radial function which is equal to $1/r$ on $\Gamma$, equal to $1$ on $\Sigma$, and equal to $1 - \frac{\eps}{\pi}$ at $p$. To prove the estimate, it suffices to show that 
    \begin{equation}
    \label{1}
    \int_{B_2\setminus B_1} \vert \grad r^{-1}\vert^2 \, dv - \int_{B_2} \vert \grad f\vert^2\, dv \ge c \eps^3. 
    \end{equation}
    The first integral can be estimated by 
    \begin{equation}
    \label{2}
    \int_{B_2\setminus B_1} \vert \grad r^{-1}\vert^2 \, dv \ge \left(\frac{2\pi}{3}\eps^3\right) \left(\frac{1}{(1+\eps)^2}\right) = \frac{2\pi}{3}\eps^3 - O(\eps^4).
    \end{equation}
    It remains to estimate the second integral. 

    Introduce a spherical coordinate system $(\rho,\theta,\phi)$ centered at $p$. Thus 
\[
\begin{cases}
    x = \rho \cos \theta \sin \phi,\\
    y = \rho \sin \theta \sin \phi,\\
    1+z = \rho \cos \phi.
\end{cases}
\]
In these coordinates, $\Sigma$ is the set $\rho = \eps$, $\phi \le \phi_0$ and $\Gamma$ is the set $\rho = \eps$, $\phi \ge \phi_0$ where $\phi_0 = \arccos(\frac{\eps}{2})$. Define
\[
g(\phi) = \begin{cases}
    1, &\phi \le \phi_0\\
    \frac{1}{\sqrt{1+\eps^2-2\eps \cos \phi}}, &\phi \ge \phi_0. 
\end{cases}
\]
Then in $B_2$ the test function $f$ is given by 
\[
f(\rho,\theta,\phi) = \frac{\left(1-\frac{\eps}{\pi}\right)(\eps - \rho) + \rho g(\phi)}{\eps}.
\]
It follows that 
\[
\vert \grad f\vert^2 = \frac{(g(\phi)- 1 + \frac{\eps}{\pi})^2 + (g'(\phi))^2}{\eps^2}. 
\]
Therefore we have 
\begin{align*}
\int_{B_2} \vert \grad f\vert^2 \, dv &= 2\pi \int_{\phi = 0}^{\pi} \int_{\rho = 0}^\eps \left(\frac{(g(\phi)- 1 + \frac{\eps}{\pi})^2 + (g'(\phi))^2}{\eps^2}\right) \rho^2 \sin \phi \, d\rho \, d\phi\\
&= \frac{2\pi \eps}{3} \int_{\phi = 0}^\pi \left[\left(g(\phi)- 1 + \frac{\eps}{\pi}\right)^2 + (g'(\phi))^2\right] \sin \phi\, d\phi. 
\end{align*}
We split this integral into two pieces according to the piecewise definition of $g$. We have 
\begin{align*}
\int_0^{\phi_0} \left[\left(g(\phi)- 1 + \frac{\eps}{\pi}\right)^2 + (g'(\phi))^2\right] \sin \phi\, d\phi = \int_0^{\phi_0} \frac{\eps^2}{\pi^2} \sin\phi\, d\phi = \frac{\eps^2}{\pi^2} \left(1 - \frac{\eps}{2}\right). 
\end{align*}
For $\phi \ge \phi_0$ we can Taylor expand in $\eps$ to get 
\[
g(\phi) = 1 + \eps \cos(\phi) + O(\eps^2),\quad  g'(\phi) = -\eps \sin(\phi) + O(\eps^2). 
\]
Therefore we have 
\begin{align*}
    \int_{\phi_0}^{\pi} \left[\left(g(\phi)- 1 + \frac{\eps}{\pi}\right)^2 + (g'(\phi))^2\right] \sin \phi\, d\phi &= \int_{\phi_0}^\pi \left(\eps \cos \phi + \frac{\eps}{\pi}\right)^2 \sin \phi + \eps^2 \sin^3\phi \, d\phi + O(\eps^3)\\
    &= \left(1 - \frac{1}{\pi} + \frac{1}{\pi^2}\right)\eps^2 + O(\eps^3). 
\end{align*}
Hence we obtain an estimate 
\begin{align*}
    \int_{B_2} \vert \grad f\vert^2 \, dv &= \frac{2\pi \eps}{3}\left[\frac{\eps^2}{\pi^2} + \eps^2 - \frac{\eps^2}{\pi} + \frac{\eps^2}{\pi^2} + O(\eps^3) \right]\\
    &= \frac{2\pi}{3}\eps^3 \left[1 +\frac{2}{\pi^2} - \frac{1}{\pi} \right] + O(\eps^4).
\end{align*}
Since $\frac{2}{\pi^2}-\frac{1}{\pi} < 0$, combining this with (\ref{2}) gives (\ref{1}), as needed.
\end{proof}


\begin{prop}
    For every $\eta > 0$ there is a $\delta > 0$ so that if $Q^*(B) - Q^*(\Omega) < \delta$ then there is a number $1-\eta < s < 1$ such that 
    \[
    \int_{\{u=s\}} \|\mathring A\|^2 \, da \le \eta. 
    \]
\end{prop}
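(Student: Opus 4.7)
The plan is to deduce this directly from Proposition \ref{level-set-estimate} by a straightforward averaging/contradiction argument. First I would translate to the variable $\tau := -\log s$, so that the target range $s \in (1-\eta,1)$ corresponds to $\tau \in I_\eta := (0, -\log(1-\eta))$ and the sought conclusion becomes: there exists a regular value $\tau \in I_\eta$ of $w$ with $\int_{\Sigma_\tau} \|\mathring A\|^2\, da \le \eta$. Since $\{u = s\}$ is a smooth surface precisely when $s$ is a regular value of $u$, and the regular values have full measure by Sard's theorem, it suffices to rule out the possibility that $\int_{\Sigma_\tau} \|\mathring A\|^2\, da > \eta$ for a.e.\ $\tau \in I_\eta$.

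Next I would fix a closed subinterval $[\tau_1,\tau_2] \subset I_\eta$ with $\tau_1 > 0$ and set $b_0 := \min_{[\tau_1,\tau_2]} b(\tau)$, which is strictly positive because $b$ is continuous and positive on $(0,\infty)$ by Proposition \ref{level-set-estimate}. Under the negated hypothesis one would then have
\[
\int_0^\infty b(\tau)\int_{\Sigma_\tau}\|\mathring A\|^2\, da\, d\tau \;\ge\; \int_{\tau_1}^{\tau_2} b(\tau)\int_{\Sigma_\tau}\|\mathring A\|^2\, da\, d\tau \;\ge\; b_0(\tau_2-\tau_1)\eta,
\]
while by Proposition \ref{level-set-estimate} the same integral is bounded above by $Q^*(B) - Q^*(\Omega) < \delta$. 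Choosing $\delta := \tfrac{1}{2} b_0(\tau_2-\tau_1)\eta$, a constant depending only on $\eta$, yields the desired contradiction.

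I do not foresee any real obstacle here: this is essentially the quantitative statement that a nonnegative function with small $b$-weighted integral must be small somewhere on the support of the weight. The only mild caveat is that $b(0) = 0$ (from the explicit formula one has $b(\tau) \sim \tau/(3\pi^2)$ as $\tau \to 0^+$), so one cannot take $\tau_1 = 0$; but any choice of $0 < \tau_1 < \tau_2 < -\log(1-\eta)$ suffices for a qualitative result. If one wanted a quantitative dependence $\delta(\eta)$, optimizing the choice of $[\tau_1,\tau_2]$ against the asymptotic $b(\tau) \sim \tau/(3\pi^2)$ would give a polynomial threshold, consistent with the power-law remark following Theorem \ref{main2}.
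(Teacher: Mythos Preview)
Your proposal is correct and follows essentially the same averaging argument as the paper: the paper also restricts to a compact subinterval of $I_\eta$ (specifically $[c(\eta),d(\eta)]=[-\log(1-\eta/2),-\log(1-\eta)]$), takes $a(\eta)=\min b$ there, and applies a mean-value/pigeonhole step to Proposition \ref{level-set-estimate}. The only cosmetic difference is that the paper argues directly rather than by contradiction and fixes the particular subinterval above, giving the explicit threshold $\delta=\eta\, a(\eta)(d(\eta)-c(\eta))$.
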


\begin{proof}
Fix some $\eta > 0$. Let $c(\eta) = -\log\left(1-\frac \eta 2\right)$ and $d(\eta) = -\log(1-\eta)$. Also note that 
\[
a(\eta) = \min\{ b(\tau): c(\eta) \le \tau \le d(\eta)\} > 0. 
\]
Therefore, according to Proposition \ref{level-set-estimate}, we have 
\[
a(\eta) \int_{c(\eta)}^{d(\eta)} \int_{\{w=\tau\}} \|\mathring A\|^2 \, da \, d\tau \le Q^*(B)-Q^*(\Omega).
\]
Hence there is some $t \in [c(\eta),d(\eta)]$ for which 
\[
\int_{\{w=t\}} \|\mathring A\|^2\, da \le \frac{Q^*(B)-Q^*(\Omega)}{a(\eta)(d(\eta)-c(\eta))}. 
\]
Note that $\{w=t\} = \{u=s\}$ where $s = e^{-t} \in [\frac{\eta}{2},\eta]$. Therefore, the result follows provided we select $\delta = \eta a(\eta) (d(\eta)-c(\eta)) > 0$. 
\end{proof}

Now we can prove a preliminary stability result for exterior domains. In the following, $\Omega$ always denotes a smooth, bounded domain in $\R^3$ with connected boundary. 

\begin{prop}
\label{prelim-stability1}
For every $\eps > 0$ there is a $\delta > 0$ such that if $Q^*(B) - Q^*(\Omega) < \delta$ then there is a ball $B(x,r)$ such that $\Omega \subset B(x,r)$ and $\bd B(x,r)$ is contained in the $\eps r$-neighborhood of $\bd \Omega$. 
\end{prop}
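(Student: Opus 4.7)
The plan is to combine the preceding level-set estimate with the De Lellis--M\"uller almost-umbilic theorem and Proposition \ref{capacity-est}. Fix $\eps > 0$ and let $\eta > 0$ be a small parameter to be chosen. By the preceding proposition, there is $\delta_1 = \delta_1(\eta) > 0$ such that $Q^*(B) - Q^*(\Omega) < \delta_1$ produces a regular value $s \in (1-\eta, 1)$ with $\int_{\Sigma_s} \|\mathring A\|^2\, da \le \eta$, where $\Sigma_s := \{u = s\}$. Since $\bd \Omega$ is connected, $\Sigma_s$ is a closed connected surface, and as a level set of the capacitary potential it bounds a region $D_s$ with $\Omega \subset D_s$.

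The next step is to apply the optimal De Lellis--M\"uller almost-umbilic estimate \cite{de2005optimal} to $\Sigma_s$: for $\eta$ small enough, there is a ball $B(x,r)$ with $r$ comparable to $\sqrt{\area(\Sigma_s)/(4\pi)}$ such that the Hausdorff distance between $\Sigma_s$ and $\bd B(x,r)$ is at most $\tau(\eta)\, r$, where $\tau(\eta) \to 0$ as $\eta \to 0$. Setting $r' = r(1 + \tau(\eta))$ gives $\Omega \subset D_s \subset B(x, r')$. To compare capacities, note that extending $u/s$ by $1$ inside $D_s$ yields a function which is $\ge 1$ on $D_s$ and decays to $0$ at infinity; hence
\[
\cp(D_s) \le \int_{\R^3 \setminus D_s} |\grad(u/s)|^2\, dv \le s^{-2}\, \cp(\Omega),
\]
so $\cp(\Omega) \ge (1-\eta)^2\, \cp(D_s)$. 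By quantitative continuity of capacity under small $C^0$ perturbations of the boundary (using the Hausdorff closeness from the previous step), $\cp(D_s) \ge (1 - o_\eta(1))\, \cp(B(x, r'))$, and therefore $\cp(\Omega)/\cp(B(x, r')) \ge 1 - o_\eta(1)$. Choosing $\eta$ so small that $o_\eta(1) < c\eps^3$, Proposition \ref{capacity-est} then places $\bd B(x, r')$ in the $\eps r'$-neighborhood of $\bd \Omega$. Setting $\delta = \delta_1(\eta)$ completes the argument.

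The main obstacle is extracting quantitative Hausdorff control (rather than merely $W^{2,2}$ control via a conformal parametrization) from the De Lellis--M\"uller theorem, and then quantifying the continuity of capacity under the resulting boundary perturbation. Both ingredients are essentially known, but they must be combined carefully so that the implicit radii in the two steps match and the error term $o_\eta(1)$ is genuinely driven below $c\eps^3$; the $\eps^9$ rate mentioned in the remark following Theorem \ref{main1} is an indication that this bookkeeping is delicate.
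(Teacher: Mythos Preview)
Your approach is essentially the paper's: find a nearly-umbilic level set via Proposition~\ref{level-set-estimate}, round it with De Lellis--M\"uller, then feed a capacity ratio into Proposition~\ref{capacity-est}. The two points you flag as obstacles are handled more directly in the paper than your outline suggests. For the Hausdorff control, the paper simply invokes the Sobolev embedding $W^{2,2}(S^2)\hookrightarrow L^\infty$ on the De Lellis--M\"uller conformal parametrization $\psi$, yielding $B(x,R(1-C\eta))\subset D_s\subset B(x,R(1+C\eta))$ in one step---no separate ``Hausdorff version'' of the theorem is needed. For the capacity comparison, the paper avoids any general $C^0$-continuity statement entirely: since $D_s\supset B(x,R(1-C\eta))$, monotonicity of capacity plus the explicit formula $\cp(B(x,\rho))=4\pi\rho$ give $\cp(D_s)/\cp(B(x,r))\ge (1-C\eta)/(1+C\eta)$ immediately. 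With these two simplifications your argument matches the paper's line for line.
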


\begin{proof} Fix some $\eps > 0$. Then choose $\eta = \eta(\eps) > 0$ to be specified later. 
Let $u$ be the capacitary potential for $\Omega$. Let $\Gamma_t = \{u=t\}$ and note that $\bd \Omega = \Gamma_1$. By Proposition \ref{level-set-estimate}, we know that if $\delta$ is small enough depending on $\eta$, then there exists $1 - \eta  < s < 1$ such that 
\[
\int_{\Gamma_s} \| \mathring A\|^2\, da \le \eta.
\]
We denote by $C$ an absolute constant which is allowed to change from line to line. By the De Lellis-M\"uller almost-umbilic estimate \cite{de2005optimal}, there is a point $x\in \R^3$ and a radius $R > 0$ and a conformal paramaterization $\psi\colon S^2\to \Gamma_s$ satisfying 
 \[
 \|\psi - x - R\text{Id}\|_{W^{2,2}(S^2)} \le C R \|\mathring A\|_{L^2(\Gamma_s)}^2 \le C R \eta. 
 \]
Let $\Omega_s$ denote the region enclosed by $\Gamma_s$. Then by the continuous embedding of $W^{2,2}$ into $L^\infty$ we have $B(x,R(1-C\eta)) \subset  \Omega_s \subset B(x,R(1+C\eta))$.  

Choose a number $r < R(1+C\eta)$ so that $\Omega \subset \Omega_s \subset B(x,r)$. It remains to prove that $\bd B(x,r)$ is contained in the $\eps r$-neighborhood of $\bd \Omega$.  Since $\Sigma_s$ is a level set of the capacitary potential, we have 
\[
\frac{\cp( \Omega)}{\cp(\Omega_s)} = \frac{1}{s} \ge 1-C\eta. 
\]
It follows that 
\begin{align*}
\frac{\cp(\Omega)}{\cp(B(x,r))} &= \left(\frac{\cp{(\Omega)}}{\cp (\Omega_s)}\right)\left(\frac{\cp(\Omega_s)}{\cp(B(x,r))} \right)\\
&\ge (1-C\eta) \frac{\cp(B(x,R(1-C\eta)))}{\cp(B(x,R(1+C\eta)))} \\
&= (1-C\eta)\frac{4\pi R(1-C\eta)}{4\pi R(1+C\eta)} \ge 1-C\eta. 
\end{align*} 
Therefore, assuming $\eta = \eta(\eps)$ is chosen small enough, we can apply Proposition \ref{capacity-est} to conclude. 
\end{proof}

\begin{rem}
    In Proposition \ref{prelim-stability1}, it suffices to choose $\eta = O(\eps^3)$. Therefore, by the explicit formula for $b(\tau)$, it suffices to choose $\delta = \eta a(\eta)(d(\eta)-c(\eta)) = O(\eta^3) = O(\eps^9)$.  
\end{rem}

Now we want to apply this preliminary result to deduce stronger stability. 

\begin{defn}
Let $\Omega\subset \R^3$ be a smooth, bounded domain. Define $\mathcal B(\Omega)$ to be the set of all open balls $B$ such that $B \subset \Omega$. 
\end{defn}

It is obvious that $\mathcal B(\Omega)$ is non-empty. 

\begin{prop}
\label{max-ball}
There exists a ball $B\in \mathcal B(\Omega)$ with maximal radius. Moreover, every hemisphere of $\bd B$ intersects $\bd \Omega$. 
\end{prop}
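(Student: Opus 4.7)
The plan is in two parts: (i) a standard compactness argument for the existence of a ball of maximal radius, and (ii) a translation argument showing that if some hemisphere of $\partial B$ missed $\partial \Omega$, we could push and then enlarge $B$, contradicting maximality.

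\emph{Existence.} Since $\Omega$ is bounded, $R := \sup\{r : B(x,r) \in \mathcal B(\Omega)\}$ is finite. Choose a sequence $B(x_n, r_n) \subset \Omega$ with $r_n \to R$. The centers $x_n$ lie in the bounded set $\Omega$, so after passing to a subsequence we may assume $x_n \to x$ for some $x \in \R^3$. For any $p$ with $|p-x| < R$ we have $|p-x_n| < r_n$ for all large $n$, so $p \in B(x_n,r_n)\subset \Omega$; hence $B(x,R)\subset \Omega$, giving $B(x,R)\in\mathcal B(\Omega)$.

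\emph{Hemisphere property.} Let $B = B(x,R)$ be a ball of maximal radius, and set $K = \overline B \cap \partial \Omega$. Since $\Omega$ is open and $B\subset\Omega$, any point of $K$ must actually lie on $\partial B$. Moreover $K$ is non-empty: otherwise $\overline B$ is a compact subset of $\Omega$, has positive distance to $\partial\Omega$, and $B$ can be enlarged. I will argue the contrapositive of the hemisphere claim. Suppose some closed hemisphere of $\partial B$ is disjoint from $\partial\Omega$; then there is a unit vector $v$ with
\[
K \subset \{y\in \partial B : \langle y-x, v\rangle < 0\}.
\]
By compactness of $K$ there exists $c>0$ with $\langle y-x, v\rangle \le -cR$ for every $y\in K$. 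Consider the translated center $x_t = x + tv$ for small $t>0$. The idea is to show $\dist(x_t, \partial\Omega) > R$, which (together with $x_t\in\Omega$ for small $t$ by openness) produces a ball in $\mathcal B(\Omega)$ of radius strictly greater than $R$, a contradiction.

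To control $\dist(x_t,\partial\Omega)$, choose an open neighbourhood $U$ of $K$ in $\R^3$ small enough that $\langle y-x,v\rangle \le -cR/2$ for every $y\in U \cap \partial\Omega$. On the complement $\partial\Omega\setminus U$, which is compact, the continuous function $|y-x|$ attains its minimum; since this minimum is strictly greater than $R$ (else that minimiser would lie in $K\subset U$), there exists $\rho>0$ with $|y-x|\ge R+\rho$ on $\partial\Omega\setminus U$. Then for $y\in U\cap\partial\Omega$ the expansion
\[
|y-x_t|^2 = |y-x|^2 - 2t\langle y-x, v\rangle + t^2 \ge R^2 + tcR + t^2 > R^2
\]
gives $|y-x_t|>R$, while for $y\in\partial\Omega\setminus U$ and $0<t<\rho$ we trivially have $|y-x_t|\ge |y-x|-t \ge R+\rho-t>R$. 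Combining these two cases yields $\dist(x_t,\partial\Omega)>R$, completing the contradiction.

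The main subtlety is only the second step: one needs the two-region splitting (the $U$ near $K$ and its complement) to convert the pointwise inequality $\langle y-x,v\rangle<0$ on $K$ into a uniform bound, and then exploit compactness of $\partial\Omega$ (which is valid because $\Omega$ is bounded with smooth boundary) to keep distant points under control. Both ingredients are routine once set up.
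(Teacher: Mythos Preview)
Your proof is correct and follows essentially the same approach as the paper: existence via compactness of centers, and the hemisphere property via translating the center in the direction of the missing hemisphere and then enlarging. The paper argues the second step more tersely (observing that an $\eps$-neighborhood of the missing closed hemisphere lies inside $\Omega$, hence the translated closed ball does too), whereas you carry out the same translation argument with an explicit two-region splitting and distance computation; the underlying idea is identical.
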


\begin{proof}
Define $r = \sup\{\rho > 0: \text{there exists a ball $B$ of radius $\rho$ in $\mathcal B(\Omega)$}\}$. We have $r < \infty$ since $\Omega$ is bounded. We can choose a sequence of balls $B(x_i,r_i)\in \mathcal B(\Omega)$ with $r_i \to r$. After passing to a subsequence, we can suppose that $x_i\to x$. Then it is clear that $B = B(x,r) \in \mathcal B(\Omega)$. 

Now consider some unit vector $w\in S^2$. Consider the hemisphere
\[
S = \{y\in \bd B: \la y-x,w\ra \ge 0\}. 
\]
Suppose for contradiction that $S \cap \bd \Omega$ is empty. Then for some $\eps > 0$, the $\eps$-neighborhood of $S$ is contained in $\Omega$. Consequently, for sufficiently small $\delta > 0$, the closed ball $\cl B(x+\delta w, r)$ is contained in $\Omega$. This contradicts that $r$ was the maximal radius of a ball in $\mathcal B(\Omega)$ since now $B(x+\delta w, r  +\eta)$ is contained in $\Omega$ for sufficiently small $\eta > 0$. 
\end{proof}

\begin{defn}
Let $\mathcal A(\Omega)$ be the set of all balls $B = B(x,r)\in \mathcal B(\Omega)$ such that there exist points $y,z\in \bd B\cap \bd \Omega$ with $\angle yxz \ge \frac{\pi}{4}$. 
\end{defn}

Let $\mathcal M$ be the medial axis of $\Omega$. Observe that a point $x\in \Omega$ is the center of a ball $B\in \mathcal A(\Omega)$ if and only if $x\in \mathcal M$ and the separation angle at $x$ is at least $\frac{\pi}{4}$. 

\begin{prop}
There exists a ball $B\in \mathcal A(\Omega)$ with minimal radius. 
\end{prop}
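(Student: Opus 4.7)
The plan is a straightforward compactness argument on a minimizing sequence of balls in $\mathcal{A}(\Omega)$, with the only subtle point being that the limit ball cannot degenerate to a boundary point of $\Omega$. As a preliminary remark, $\mathcal{A}(\Omega)$ is non-empty: by Proposition \ref{max-ball} the maximal ball $B(x_0, r_0) \in \mathcal{B}(\Omega)$ has the property that every closed hemisphere of $\partial B(x_0, r_0)$ meets $\partial \Omega$; picking any $y \in \partial B(x_0, r_0) \cap \partial \Omega$ and applying the hemisphere condition to the closed hemisphere opposite to $y$ yields a $z \in \partial B(x_0, r_0) \cap \partial \Omega$ with $\angle yx_0z \ge \pi/2 \ge \pi/4$.

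Next I would set $r^* = \inf\{r : B(x,r) \in \mathcal{A}(\Omega) \text{ for some } x \in \Omega\}$ and choose a minimizing sequence $B(x_i, r_i) \in \mathcal{A}(\Omega)$ together with witnesses $y_i, z_i \in \partial B(x_i, r_i) \cap \partial \Omega$ satisfying $\angle y_i x_i z_i \ge \pi/4$. Since $\Omega$ is bounded, after passing to subsequences I may assume $x_i \to x^*$, $y_i \to y^*$, $z_i \to z^*$ with $y^*, z^* \in \partial \Omega$.

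The hard part is ruling out $x^* \in \partial \Omega$, i.e. $r^* = 0$. In that case $|y_i - x^*| \le r_i + |x_i - x^*| \to 0$, so $y_i, z_i \to x^*$. Because $B(x_i, r_i) \subset \Omega$ and $y_i \in \partial B(x_i, r_i) \cap \partial \Omega$, the point $y_i$ is a nearest point on $\partial \Omega$ to $x_i$, hence $y_i - x_i = r_i\, n(y_i)$, where $n$ denotes the outward unit normal to $\partial \Omega$. The analogous identity holds for $z_i$. By $C^2$-smoothness of $\partial \Omega$, both $n(y_i)$ and $n(z_i)$ converge to $n(x^*)$, which forces $\angle y_i x_i z_i \to 0$ and contradicts the hypothesis $\angle y_i x_i z_i \ge \pi/4$. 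Hence $x^* \in \Omega$ and $r^* > 0$.

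Finally, I would verify that $B(x^*, r^*) \in \mathcal{A}(\Omega)$. For any $p$ with $|p - x^*| < r^*$, setting $d = r^* - |p - x^*|$ and taking $i$ large enough that $|x_i - x^*| + |r_i - r^*| < d/2$ gives $|p - x_i| < r_i$, so $p \in B(x_i, r_i) \subset \Omega$; thus $B(x^*, r^*) \subset \Omega$. Continuity of distance gives $|y^* - x^*| = |z^* - x^*| = r^*$, and continuity of the angle function yields $\angle y^* x^* z^* \ge \pi/4$, completing the proof. I do not anticipate any hidden obstacles beyond the smooth-boundary argument in the third paragraph.
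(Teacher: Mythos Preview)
Your proof is correct and follows the same compactness-on-a-minimizing-sequence approach as the paper. The paper's proof is terser, asserting that $r>0$ ``since $\Omega$ is smooth'' and that $B(x,r)\in\mathcal A(\Omega)$ is ``clear''; your normal-vector argument and explicit limit verification fill in exactly those details.
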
 

\begin{proof}
Proposition \ref{max-ball} implies that the set $\mathcal A(\Omega)$ is non-empty. Therefore 
\[
r = \inf\{\rho > 0: \text{there exists a ball $B$ of radius $\rho$ in $\mathcal A(\Omega)$}\}
\]
is well-defined. Moreover, since $\Omega$ is smooth, it is clear that $r > 0$. We can choose a sequence of balls $B(x_i,r_i)\in \mathcal A(\Omega)$ with $r_i\to r$. After passing to a subsequence, we can suppose that $x_i\to x$. Then it is clear that $B(x,r)\in \mathcal A(\Omega)$ is a ball with minimal radius. 
\end{proof}


Next we prove a structural result which shows that if $Q(\Omega)$ is close to $Q(B)$ then $\Omega$ must take a particular form near each ball in $\mathcal A(\Omega)$. 

\begin{prop}
\label{A-structure1}
For every $\eps > 0$ there is a $\delta > 0$ so that if $Q(B)-Q(\Omega) < \delta$ then the following property holds: whenever $B(x,r)\in \mathcal A(\Omega)$ it follows that $\bd B(x,r)$ is contained in the $\eps r$-neighborhood of $\bd \Omega$.  
\end{prop}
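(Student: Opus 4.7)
The plan is to use conformal inversion about $x$ to reduce to the exterior stability result of Proposition \ref{prelim-stability1}, and then transfer the conclusion back. Define $\phi(y) = x + r^2(y-x)/|y-x|^2$, the inversion about $x$ fixing $\bd B(x,r)$ pointwise, and let $\Omega^*$ denote the interior of $\R^3 \setminus \phi(\Omega \setminus \{x\})$. Since $B(x,r) \subset \Omega$, one checks that $\Omega^*$ is a smooth bounded domain with $\Omega^* \subset B(x,r)$, $x \in \Omega^*$, and $\bd \Omega^* = \phi(\bd \Omega) \subset \bar B(x,r)$. Because $\phi$ fixes $\bd B(x,r)$ pointwise, the witnessing points $y,z$ of $B(x,r) \in \mathcal{A}(\Omega)$ satisfy $y,z \in \bd \Omega^* \cap \bd B(x,r)$ with $\angle y x z \geq \pi/4$. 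The conformal invariance of the Yamabe quotient gives $Q^*(\Omega^*) = Q(\Omega)$, so the hypothesis becomes $Q^*(B) - Q^*(\Omega^*) < \delta$.

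With an auxiliary parameter $\eps' > 0$ to be specified, and $\delta$ taken small enough, Proposition \ref{prelim-stability1} produces a ball $B(x^*,r^*)$ with $\Omega^* \subset B(x^*,r^*)$ and $\bd B(x^*,r^*)$ in the $\eps' r^*$-neighborhood of $\bd \Omega^*$. The key step will be to show $|x^*-x|+|r^*-r| \leq C_0 \eps' r$ for a constant $C_0$ depending only on the angle $\pi/4$. The inputs are: (i) $x \in \Omega^* \subset B(x^*,r^*)$ yields $|x-x^*| < r^*$; (ii) the inclusion $\bd B(x^*,r^*) \subset \bar B(x, r+\eps' r^*)$, which follows from $\bd \Omega^* \subset \bar B(x,r)$, forces $|x^*-x| + r^*(1-\eps') \leq r$; and (iii) the containment $y,z \in \bar B(x^*,r^*)$ combined with $|y-x|=|z-x|=r$ yields a quadratic inequality. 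Writing $v = x^*-x$ and $m = \tfrac{1}{2}(y+z)-x$, summing the expansions of $|y-x^*|^2 \le r^{*2}$ and $|z-x^*|^2 \le r^{*2}$ produces an inequality of the form $|v|(r-|m|) \leq C\eps' r^2$. Since $|m| = r\cos(\angle yxz/2) \leq r\cos(\pi/8) < r$, the coefficient $r - |m|$ is bounded below by a positive constant, giving $|v| = O(\eps' r)$; the bound on $|r^*-r|$ then follows from (ii).

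The final step undoes the inversion. The Hausdorff bound $d_H(\bd B(x^*,r^*), \bd B(x,r)) = O(\eps' r)$ combined with the preliminary stability conclusion yields $\bd B(x,r)$ contained in the $O(\eps' r)$-neighborhood of $\bd \Omega^*$. Since $\phi$ fixes $\bd B(x,r)$ pointwise and its derivative there is an orthogonal transformation, $\phi$ is bi-Lipschitz with constant $1 + O(\eps')$ on a thin annular neighborhood of $\bd B(x,r)$. Transporting through $\phi$, the sphere $\bd B(x,r) = \phi(\bd B(x,r))$ lies in an $O(\eps' r)$-neighborhood of $\phi(\bd \Omega^*) = \bd \Omega$. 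Taking $\eps'$ to be a sufficiently small multiple of $\eps$ completes the proof.

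The main obstacle is the key step. Proposition \ref{prelim-stability1} on its own only supplies some approximating ball, which a priori could be quite different from the model ball $B(x,r)$ — for instance, $\Omega^*$ could be a thin lens-shaped region near $\bd B(x,r)$ with a much smaller approximating ball. The two geometric features that save us — that the inversion center $x$ lies in $\Omega^*$, and that $\bd \Omega^*$ meets $\bd B(x,r)$ at two points separated by an angle at least $\pi/4$ — are precisely what allow us to pin $(x^*,r^*)$ down to $(x,r)$ quantitatively.
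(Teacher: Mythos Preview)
Your proposal is correct and follows the same overall strategy as the paper: invert conformally about the center $x$, apply Proposition~\ref{prelim-stability1} to the inverted domain $\Omega^*$, argue that the resulting ball $B(x^*,r^*)$ must be close to $B(x,r)$, and then transport back through the inversion.

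The only notable difference is in how the key step is executed. The paper normalizes to $B(0,1)$ and argues by contradiction in two geometric cases (if $r^*$ is too large, $\Omega^*\subset B(0,1)$ forces a far point on $\bd B^*$; if $r^*$ is too small, the two boundary points at angle $\ge \pi/4$ force a far point, illustrated with a figure), then says ``similar considerations'' pin down $x^*$. You instead give a direct quantitative argument, combining the containment $\bd B(x^*,r^*)\subset \bar B(x,r+\eps' r^*)$ with the summed expansion of $|y-x^*|^2,|z-x^*|^2\le r^{*2}$ to obtain $|v|(r-|m|)\le C\eps' r^2$, and then using $|m|=r\cos(\angle yxz/2)\le r\cos(\pi/8)$ to extract $|v|=O(\eps' r)$. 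This is a clean algebraic substitute for the paper's picture-based case analysis, and it makes the dependence on the separation angle completely explicit. Likewise, for the final step the paper writes out the inversion formula explicitly, while you invoke the bi-Lipschitz property of $\phi$ near the fixed sphere; these are equivalent.
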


\begin{proof}
Fix a ball $B = B(x,r)\in \mathcal A(\Omega)$. Since the conclusion is translation invariant and scale invariant, we can assume without loss of generality that $B = B(0,1)$. 
Let $\phi(x) = x/\vert x\vert^2$ be the conformal inversion centered at the origin and let $\Omega^*$ be the interior of $\R^3\setminus [\phi(\Omega \setminus \{0\})]$. Then $Q^*(B) - Q^*(\Omega^*) = Q(B)-Q(\Omega)$.  Hence, by the previous results, if $\delta$ is small enough then there is a ball $B^* = B(x^*,r^*)$ such that $\Omega^*\subset B^*$ and $\bd B^*$ is contained in the $\frac{\eps}{100}r^*$-neighborhood of $\bd \Omega^*$.  

We claim that $1-\eps < r^* < 1+\eps$. Indeed, if $r^* \ge 1+\eps$, then since $\Omega^*\subset B(0,1)$, it follows that there is a point $z\in \bd B^*$ with $d(z,\bd \Omega^*) \ge d(z,\bd B(0,1)) \ge r^*-1$. This implies that 
\[
r^*-1 \le \frac{\eps}{100} r^* 
\]
and therefore that 
\[
r^* \le \frac{1}{1-\frac{\eps}{100}} < 1 + \eps
\]
which is a contradiction. 

On the other hand, suppose that $r^* < 1 -\eps$. 
Then since $\bd \Omega^*$ contains two points on the unit sphere making an angle of at least $\frac \pi 4$, it follows by elementary geometry that there must be a point $z\in \bd B^*$ which is far away from $\bd B(0,1)$.
\[
\includegraphics[width=3.5in]{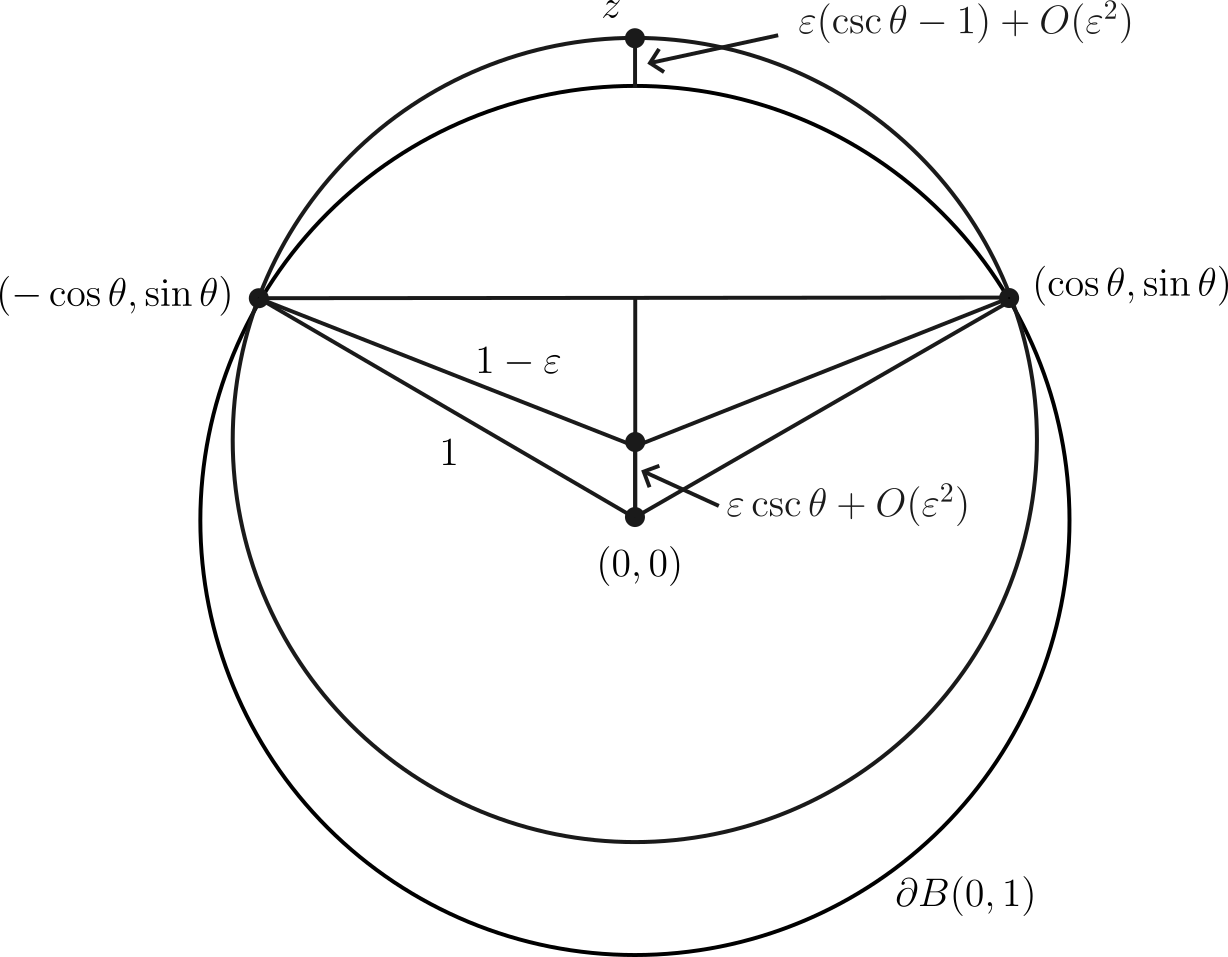}
\]
Indeed, in the above figure we have $0\le\theta \le \frac {3 \pi}{8}$ and therefore the point $z$ at the top of the figure is at distance at least $\eps(\csc \theta - 1) + O(\eps^2) \ge \frac{\eps}{10}$ from $\bd B(0,1)$. Thus we have  $d(z,\bd \Omega^*) \ge d(z,\bd B(0,1)) \ge \frac{\eps}{10}$.  This contradicts that $\bd B^*$ is contained in the $\frac{\eps}{100}r^*$-neighborhood of $\bd \Omega^*$. 
Therefore we have $1-\eps < r^*< 1+\eps$.  Similar considerations now show that  $d(x^*,0) \le 2\eps$. 
Therefore $B^*$ is very close to the unit ball. 

The conclusion now follows by undoing the conformal inversion. Indeed, consider some point $y\in \bd B(0,1)$. We know that there is a point $y^*\in \bd B^*$ with $d(y^*,y) \le 3\eps$. Moreover, there is a point $z^* \in \bd \Omega^*$ with $d(y^*,z^*) \le \frac{\eps}{100}r^* \le \eps$. The triangle inequality implies that $d(y,z^*) \le 4\eps$. In particular, we have $1 - 4\eps \le \vert z^*\vert \le 1+4\eps$. Let $z = \phi(z^*) \in \bd \Omega$ and note that $y = \phi(y)$. Hence, provided $\eps$ is small enough, we can estimate 
\begin{align*}
    \vert y - z\vert = \left\vert y - \frac{z^*}{\vert z^*\vert^2}\right\vert = \frac{\vert \vert z^*\vert^2 y - z^*\vert}{\vert z^*\vert^2} &\le  \frac{\vert z^*\vert^2 \vert y-z^*\vert + \vert \vert z^*\vert^2 z^*-z^*\vert}{\vert z^*\vert^2}\\
    &= \vert y - z^*\vert + \frac{\vert \vert z^*\vert^2 - 1\vert}{\vert z^*\vert}\\
    &\le 4\eps + \frac{(4\eps)^2}{1 - 4\eps} \le 5\eps.
\end{align*}
This proves the proposition. 
\end{proof}

Now we can proceed with the main results. At this point, the reader may wish to review the flow $\mathfrak C$ discussed in Section \ref{Section:Medial}.

\begin{rethm}
    For any $\eps > 0$ there is a $\delta > 0$ such that if $Q(B) - Q(\Omega) < \delta$ then there is a ball $B(x,r)$ such that $B(x,r) \subset \Omega \subset B(x,r(1+\eps))$.   
\end{rethm}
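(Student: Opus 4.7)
The plan is to combine Proposition \ref{A-structure1} with the Lieutier flow $\mathfrak{C}$ from Section \ref{Section:Medial}. I begin by letting $B(p,r)$ be a ball in $\mathcal{A}(\Omega)$ of smallest radius; such a ball exists (by the preceding discussion) and is by definition contained in $\Omega$. It therefore suffices to show $\Omega \subset B(p, r(1+\eps))$.

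Suppose toward a contradiction that there exists $q \in \Omega$ with $|q - p| \ge r(1+\eps)$. The goal is to manufacture a ball in $\mathcal{A}(\Omega)$ of radius strictly less than $r$, contradicting the minimality of $r$. By Proposition \ref{angle}, it suffices to exhibit $x \in \mathcal{M}$ with $\mathcal{R}(x) < r$ and $\|\nabla(x)\|^2 \le \frac{1}{2}$; then $B(x, \mathcal{R}(x))$ will lie in $\mathcal{A}(\Omega)$ with radius less than $r$.

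To locate such an $x$, I would select a starting point $q_0 \in \Omega$ close to the ``bulge'' near $q$ --- for instance, an inward perturbation $q_0 = q^* - \eta \hat{n}$ of a boundary point $q^* \in \bd\Omega$ realizing a local maximum of $|\cdot - p|$ on $\bd\Omega$ with $|q^* - p| \ge r(1+\eps) - O(\eta)$. Then $\mathcal{R}(q_0) \approx \eta \ll r$ and the initial flow direction at $q_0$ points into the bulge rather than towards $B(p,r)$. I would then flow $q_0$ forward under $\mathfrak{C}$. Along the trajectory, $\mathcal{R}$ is monotonically non-decreasing, and Lieutier's theorem guarantees convergence to a fixed point, at which $\|\nabla\|^2 = 0$. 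Setting $t_0 := \inf\{t \ge 0 : \|\nabla(\mathfrak{C}(t, q_0))\|^2 \le \frac{1}{2}\}$, the set $\{\|\nabla\|^2 \le \frac{1}{2}\}$ is closed by the lower semi-continuity of $\|\nabla\|$ and the limit fixed point lies in it, so $t_0 < \infty$; take $x_0 := \mathfrak{C}(t_0, q_0)$.

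The main obstacle will be showing $\mathcal{R}(x_0) < r$. If instead $\mathcal{R}(x_0) \ge r$, then $B(x_0, \mathcal{R}(x_0))$ lies in $\mathcal{A}(\Omega)$ and Proposition \ref{A-structure1} forces $\bd\Omega$ to simultaneously shadow both $\bd B(p, r)$ and $\bd B(x_0, \mathcal{R}(x_0))$ very closely. The connectedness of $\bd\Omega$, together with the bound $\|\nabla\|^2 > \frac{1}{2}$ on $[0, t_0)$ (which controls the spatial distance traveled by the flow in terms of $r - \mathcal R(q_0)$), will then force the existence of a ``neck'' region of $\Omega$ between these two nearly-inscribed balls in which the thickness of $\Omega$ is strictly less than $2r$. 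A point on the centerline of this neck yields a fresh $x \in \mathcal{M}$ with $\mathcal{R}(x) < r$ and separation angle close to $\pi$, producing the desired contradiction. Converting this neck picture into a fully rigorous argument, using the continuity properties of $\mathfrak{C}$ and the interplay between $\mathcal{R}$, $\|\nabla\|$, and the separation angle, will be the technical heart of the proof.
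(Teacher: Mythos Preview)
Your skeleton---take a minimal-radius ball in $\mathcal A(\Omega)$, run the Lieutier flow from a point in the protrusion, and invoke Proposition \ref{angle} once $\|\nabla\|^2\le\tfrac12$---is the paper's strategy. But the case $\mathcal R(x_0)\ge r$ is a genuine gap, not just a detail to be filled in. Your asserted distance control ``in terms of $r-\mathcal R(q_0)$'' is wrong: on $[0,t_0)$ one has $\|\nabla\|>1/\sqrt2$, which only bounds the arclength by $\sqrt2\,(\mathcal R(x_0)-\mathcal R(q_0))$, and $\mathcal R(x_0)$ may be much larger than $r$. Nothing then stops $x_0$ from landing near $p$, and there is no neck to exploit. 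Two smaller issues: Lieutier does not prove trajectories converge to fixed points (finiteness of $t_0$ follows instead from $\int_0^\infty\|\nabla\|^2\,d\tau\le\sup_\Omega\mathcal R<\infty$); and at a local maximum of $|\cdot-p|$ on $\partial\Omega$ the outward normal is parallel to $q^*-p$, so the initial flow from your $q_0$ points \emph{toward} $p$, not into the bulge.

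The paper avoids your problematic case entirely by applying Proposition \ref{A-structure1} to the minimal ball \emph{first}, so that $\partial B(p,r)$ already sits in the $\tfrac{\eps}{100}r$-neighborhood of $\partial\Omega$, and then splitting into two cases. Case 2: every point of $\Omega\setminus B(p,r)$ has $\mathcal R\le\tfrac{\eps}{4}r$. Then one flows directly from the far point $y$; since $\|\nabla\|\le1$ the trajectory remains outside $B(p,r)$ for time $\eps r$, so $\mathcal R$ stays $\le\tfrac{\eps}{4}r$ throughout, forcing $\|\nabla\|^2\le\tfrac14$ somewhere on that interval---and there $\mathcal R<r$ automatically. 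Case 1: some $x\in\Omega\setminus B(p,r)$ has $\mathcal R(x)\ge\tfrac{\eps}{2}r$. Here the shadowing of $\partial B(p,r)$ directly produces a neck: any path in $\Omega$ from $p$ to $x$ must cross $\partial B(p,r)$, where $\mathcal R\le\tfrac{\eps}{100}r$, so the maximal tube-radius $r_0$ along paths in $\Omega$ from $p$ to $x$ satisfies $r_0<\tfrac{\eps}{4}r$. One then flows an entire near-optimal path forward for a fixed time; if $\|\nabla\|^2>\tfrac12$ held throughout the relevant collar, the tube radius would strictly increase beyond $r_0$, contradicting its definition. The dichotomy, together with the path-pushing (rather than point-pushing) use of the flow in Case 1, is the organizational idea your proposal is missing.
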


\begin{proof}
     Let $B\in \mathcal A(\Omega)$ be a ball of minimal radius. Since the statement is scale invariant and translation invariant, we may as well assume that $B = B(0,1)$. Then we need to show that $\Omega\subset B(0,1+\eps)$. 
     
     Applying Proposition \ref{A-structure1}, if $\delta$ is small enough, then $\bd B(0,1)$ is contained in the $\frac{\eps}{100}$-neighborhood of $\bd \Omega$. 
Assume for contradiction that there is a point $y\in \Omega$ with $\vert y\vert \ge 1+\eps$. To prove the proposition, we need to argue that  there is actually  a ball in $\mathcal A(\Omega)$ with radius smaller than 1.   
By Proposition \ref{angle}, it suffices to find a point $p\in \Omega$ with $\mathcal R(p) < 1$ and $\|\nabla (p)\|^2 \le \frac{1}{2}$. We distinguish two cases. 

Case 1: There is a point $x\in \Omega$ with $\vert x\vert \ge 1$ and $B(x,\frac{\eps}{2})\subset \Omega$. In this case, $\Omega$ must contain a neck. More precisely, let $P$ be the set of all continuous paths $\gamma\colon [0,1]\to \Omega$ connecting the origin to $x$. Define 
\[
r_0 = \sup\{r>0: \text{there exists } \gamma\in P \text{ with } B(\gamma(t),r)\subset \Omega \text{ for all } t\in[0,1]\}. 
\]
Under the assumptions of Case 1, it is clear that $r_0 < \frac{\eps}{4}$.  Let $\Omega_a = \{z\in \Omega: d(z,\bd \Omega)\le a\}$. We claim there is a point in $p\in \Omega_{\eps/2}$ with $\|\del(p)\|^2\le 1/2$. Suppose instead that $\|\del(p)\|^2\ge 1/2$ for all $p\in \Omega_{\eps/2}$.  Let $\tau = d(\Omega_{\eps/4},\bd \Omega_{\eps/2}) > 0$. 
Choose a number $\eta > 0$ much smaller than $\tau$. Then choose a number $r_0 - \eta < r < r_0$ and a path $\gamma\in P$ with $B(\gamma(t),r)\subset \Omega$ for all $t$. Consider the new path $\tilde \gamma(t) = \mathfrak C(\tau,\gamma(t))$.  This is still continuous. Consider any time $s \in [0,1]$ with $\mathcal R(\gamma(s)) \le r_0 + \eta$. Then $\gamma(s) \in \Omega_{\eps/4}$ and so $\mathfrak C(\sigma,\gamma(s))\in \Omega_{\eps/2}$ for all $\sigma\in[0,\tau]$. Therefore we have 
\[
\mathcal R(\tilde \gamma(s)) \ge \mathcal R(\gamma(s)) + \int_0^\tau \|\mathfrak C(\sigma,\gamma(s))\|^2 \, d\sigma \ge \mathcal R(\gamma(s)) + \frac{\tau}{2} > r_0 + \eta.
\]
It follows that we have $\mathcal R(\tilde \gamma(t)) > r_0+ \eta$ for all $t\in [0,1]$. Finally, we can append to $\tilde \gamma$ paths from $\tilde \gamma(0)$ to the origin and from $\tilde \gamma(1)$ to $x$ and then re-parameterize to obtain a path $\hat \gamma$ in $P$ with $B(\hat\gamma(t), r_0+\eta)\subset \Omega$ for all $t$. This contradicts the definition of $r_0$. Hence there is a point $p\in \Omega_{\eps/4}$ with $\|\del(p)\|^2 \le 1/2$. This means there is a ball in $\mathcal A(\Omega)$ centered at $p$ with radius less than 1, as needed. 

Case 2: For every $x\in \Omega$ with $\vert x\vert\ge 1$ we have $d(x,\bd \Omega) \le \frac{\eps}{4}$.  Recall we are assuming that there is a point $y\in \Omega$ with $\vert y\vert \ge 1 + \eps$. Starting from this point $y$, we can follow the flow and we have 
\begin{gather*}
    \vert \mathfrak C(t,y) \vert \ge 1 + \eps - t.
\end{gather*}
Therefore the flow starting from $y$ remains outside $B(0,1)$ for time at least $\eps$. Moreover, we have 
\[
\mathcal R(\mathfrak C(t,y)) = \mathcal R(y) + \int_0^t \|\nabla(\mathfrak C(\tau,y))\|^2\, d\tau. 
\]
Since $\mathcal R(\mathfrak C(t,y)) \le \frac \eps 4$ for $t\in [0,\eps]$, if follows that for some $\tau\in [0,\eps]$ we have $\|\nabla(\mathfrak C(\tau,y))\|^2 \le \frac{1}{2}$. This point $p=\mathfrak C(\tau,y)$ is now the center of a  ball in $\mathcal A(\Omega)$ with radius less than 1. This completes the proof.  
\end{proof}

Actually, inspecting the above proof, we've really shown that any point $x\in \Omega \setminus B(0,1)$ can be joined to $B(0,1)$ by a path of length at most $\eps$ which stays inside $\Omega$. 
This implies  that $\Omega$ is Gromov-Hausdorff close to $B(0,1)$ as a metric space with its induced length metric. 

\begin{defn}
    Let $(X,d_X)$ and $(Y,d_Y)$ be two compact metric spaces. We say a (not necessarily continuous) map $f\colon X\to Y$ is an $\eps$-Gromov-Haussdorff approximation if 
    \begin{itemize}
        \item[(i)] the $\eps$ neighborhood of $f(X)$ in $Y$ is all of $Y$; and
        \item[(ii)] for all $x_1,x_2\in X$ we have $\vert d_Y(f(x_1),f(x_2))-d_X(x_1,x_2)\vert < \eps$.
    \end{itemize}
\end{defn}

It is well-known that if there exist $\eps$-Gromov Haussdorff approximations $f\colon X\to Y$ and $h\colon Y\to X$ then the Gromov-Hausdorff distance between $X$ and $Y$ is at most $\eps$. 

\begin{rethm}
    For every $\eps > 0$ there is a $\delta > 0$ so that if $Q(B) - Q(\Omega) < \delta$ then, after scaling and translation, the Gromov-Hausdorff distance between the unit ball and $\Omega$ with its induced length metric is at most $\eps$.  
\end{rethm}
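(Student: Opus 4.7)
The plan is to promote the Euclidean containment from Theorem~\ref{main1} to a statement about the induced length metric $d_\Omega$, using the path-connectivity observation that the authors have just extracted from the proof of Theorem~\ref{main1}. Fix $\eps > 0$ and apply Theorem~\ref{main1} with tolerance $\eps/3$. After rescaling and translating, we have $B(0,1)\subset \Omega \subset B(0,1+\eps/3)$, and moreover every point of $\Omega$ can be connected to $B(0,1)$ by a path in $\Omega$ of length at most $\eps/3$.

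First I would observe that the inclusion $\iota\colon (B(0,1),|\cdot|)\hookrightarrow (\Omega, d_\Omega)$ is an isometric embedding. Indeed, since $B(0,1)\subset \Omega$ is convex, for any $x_1,x_2\in B(0,1)$ the Euclidean segment between them lies in $\Omega$ and realizes the length distance, so $d_\Omega(x_1,x_2) = |x_1-x_2|$. Combined with the path-connectivity statement, every $p \in \Omega$ lies within $d_\Omega$-distance $\eps/3$ of $\iota(B(0,1))$, so condition (i) of the definition holds and condition (ii) is automatic; hence $\iota$ is an $\eps$-Gromov--Hausdorff approximation.

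Next I would construct a reverse approximation $\pi\colon(\Omega,d_\Omega)\to (B(0,1),|\cdot|)$ by sending each $p\in\Omega$ to a chosen point $\pi(p)\in B(0,1)$ with $d_\Omega(p,\pi(p))\le\eps/3$ (setting $\pi(p)=p$ when $p\in B(0,1)$). Since $\pi$ restricted to $B(0,1)$ is the identity, its image already covers $B(0,1)$, so condition (i) holds trivially. For condition (ii), observe that $\pi(p_1),\pi(p_2)\in B(0,1)$, so by the isometric identification above $|\pi(p_1)-\pi(p_2)| = d_\Omega(\pi(p_1),\pi(p_2))$, and the triangle inequality yields
\[
\bigl|\,|\pi(p_1)-\pi(p_2)|-d_\Omega(p_1,p_2)\,\bigr|\le d_\Omega(p_1,\pi(p_1))+d_\Omega(p_2,\pi(p_2))\le \tfrac{2\eps}{3}.
\]
Thus $\pi$ is also an $\eps$-approximation. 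The existence of both $\eps$-approximations implies $d_{GH}(\Omega,B(0,1))\le \eps$.

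The step I expect to require the most care is the justification of the strengthened path-connectivity claim, stated only parenthetically after the proof of Theorem~\ref{main1}. Given $p\in\Omega$, one runs Lieutier's flow $\mathfrak C$ starting at $p$; since $\|\nabla\|\le 1$, the arc length of the trajectory equals the elapsed time. The dichotomy in the proof of Theorem~\ref{main1} then shows that the flow must reach $B(0,1)$ within time $O(\eps)$: otherwise Case~1 or Case~2 of that proof would produce either a neck or a point where $\|\nabla\|^2\le \tfrac12$ outside $B(0,1)$, hence a ball in $\mathcal A(\Omega)$ of radius strictly less than $1$, contradicting the minimality of the chosen ball. Once this flow-based path is in hand, the two $\eps$-approximations above assemble into the Gromov--Hausdorff bound with no further analytic input.
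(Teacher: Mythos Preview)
Your argument is correct and mirrors the paper's proof almost exactly: both build the inclusion $\iota\colon B\hookrightarrow\Omega$ and a nearest-point retraction (the paper calls it $f$, you call it $\pi$) as mutual Gromov--Hausdorff approximations, using the short-path property coming out of the proof of Theorem~\ref{main1}. One small clean-up: the arc length of a trajectory of $\mathfrak C$ is \emph{at most} the elapsed time (since $\|\nabla\|\le 1$), not equal to it, and the paper's logic is that Case~1 is ruled out outright, placing you in Case~2, whose flow argument then forces entry into $B(0,1)$ within time~$\eps$---your ``Case~1 or Case~2'' phrasing slightly obscures this, but the content is the same.
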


\begin{proof}
    As in the previous proof, after translation and scaling, we can assume that $B(0,1)$ is a ball of minimal radius in $\mathcal A(\Omega)$. Then, assuming $\delta$ is small enough, we have $B(0,1) \subset \Omega \subset B(0,1+\eps)$. Case 2 of the previous proof actually gives the following stronger property: for every $x\in \Omega \setminus B(0,1)$ there is a rectifiable path $\gamma\colon [0,1]\to \Omega$ with $\gamma(0) = x$ and $\gamma(1)\in B(0,1)$ such that $\gamma$ has length at most $\eps$.  

    Let $d$ denote the induced length metric on $\Omega$. Note that the induced length metric on the unit ball $B$ agrees with the Euclidean distance $d_{\text{euc}}$. We want to show there is a Gromov-Hausdorff approximation between $(\Omega,d)$ and $(B,d_{\text{euc}})$. We are going to construct $\eps$-Gromov-Hausdorff approximations between $B$ and $\Omega$.
    
    First consider the natural inclusion $\iota\colon B\to \Omega$. Note that $d(\iota(x),\iota(y)) = d_{\text{euc}}(x,y)$. Moreover, by the property discussed above, we know that the $\eps$-neighborhood of $\iota(B)$ in $(\Omega,d)$ is all of $\Omega$. 
    On the other hand, define a map $f\colon \Omega\to B$ as follows. First, if $x\in B$ then let $f(x) = x$. Second, if $x\in \Omega \setminus B(0,1)$ then let $f(x)$ be a point of $B$ such that there is a path from $x$ to $f(x)$ in $\Omega$ with length at most $\eps$. Then the image $f(\Omega)$ is all of $B(0,1)$. Finally, note that for all $x,y\in \Omega$ we have 
    \begin{align*}
        \vert d_{\text{euc}}(f(x),f(y)) - d(x,y)\vert &= \vert d(f(x),f(y)) - d(x,y)\vert \le 2\eps.
    \end{align*}
    Thus the Gromov-Hausdorff distance between $(\Omega,d)$ and $(B,d_{\text{euc}})$ is at most $2\eps$.  
\end{proof}

Finally we prove Theorem \ref{main3}.

\begin{rethm}
    There is a $\delta > 0$ so that if $\Omega\subset \R^3$ is a smooth, bounded domain with connected boundary and $Q(B) - Q(\Omega) < \delta$ then $\Omega$ is diffeomorphic to a ball.
\end{rethm}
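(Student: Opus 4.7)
The plan is to show $\bd\Omega \cong S^2$; the smooth Schoenflies theorem in $\R^3$ (Alexander's theorem) then yields $\Omega \cong B^3$. Following the strategy of Theorems \ref{main1} and \ref{main2}, I normalize so that $B(0,1)$ is a ball of minimal radius in $\mathcal A(\Omega)$, giving $B(0,1) \subset \Omega \subset B(0,1+\eps)$ with $\eps = \eps(\delta) \to 0$. Apply the conformal inversion $\phi(x) = x/|x|^2$ about the origin; the resulting bounded domain $\Omega^*$ satisfies $B(0,1/(1+\eps)) \subset \Omega^* \subset B(0,1)$ and $Q^*(B) - Q^*(\Omega^*) = Q(B) - Q(\Omega) < \delta$. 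Since $\bd\Omega^* = \phi(\bd\Omega)$ is diffeomorphic to $\bd\Omega$ (as $0\notin\bd\Omega$), it suffices to show $g(\bd\Omega^*) = 0$.

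The heart of the argument is a Morse-theoretic analysis of the capacitary potential $u^*$ of $\R^3 \setminus \Omega^*$, combined with the refined estimate
\[
\int_0^\infty b(\tau)\int_{\Sigma^*_\tau}\|\mathring A\|^2\,da\,d\tau \le Q^*(B)-Q^*(\Omega^*) < \delta
\]
from Proposition \ref{level-set-estimate}, where $\Sigma^*_\tau = \{w^* = \tau\}$ and $w^* = -\log u^*$. Fix a large regular value $T$; the standard asymptotic expansion $u^* = C/|x| + O(|x|^{-2})$ at infinity shows that $|\nabla u^*|$ never vanishes outside a large ball and that $\Sigma^*_T$ is diffeomorphic to a round sphere for $T$ sufficiently large. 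On the compact 3-manifold $R = \{0 \le w^* \le T\}$, whose boundary is $\bd\Omega^* \sqcup \Sigma^*_T$, the identity $\chi(R) = \tfrac12\chi(\bd R)$ for compact 3-manifolds with boundary together with the handle decomposition built from a collar of $\bd\Omega^*$ (attaching a $k$-handle at each interior index-$k$ critical point of $w^*$) yields
\[
g(\bd\Omega^*) = N_1 - N_2,
\]
where $N_k$ denotes the number of interior index-$k$ critical points of $u^*$ in $R$ (and harmonic $u^*$ admits no interior index-$0$ or index-$3$ critical points by the strong maximum principle). Consequently $g(\bd\Omega^*) \ge 1$ would force at least one interior index-$1$ critical point of $u^*$.

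The main obstacle, and the crux of the proof, is to rule out interior critical points of $u^*$ for $\delta$ sufficiently small. The idea is a quantitative local analysis: each non-degenerate critical point $p_\ast$ of $u^*$ with critical value $\tau_\ast = -\log u^*(p_\ast)$ forces nearby level sets to be locally modelled on hyperboloids and hence to carry a scale-invariant amount of $\|\mathring A\|^2$, producing a universal lower bound of the form
\[
\int_{\tau_\ast-\sigma}^{\tau_\ast+\sigma}\int_{\Sigma^*_\tau}\|\mathring A\|^2\,da\,d\tau \ge c_0 > 0
\]
for absolute constants $\sigma,c_0$. Maximum-principle comparison of $u^*$ with the capacitary potentials of $B(0,1)$ and $B(0,1/(1+\eps))$, together with the asymptotics at infinity, confines every critical value to a compact subinterval of $(0,\infty)$ on which $b$ is bounded below by a positive constant. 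Taking $\delta$ smaller than the resulting universal threshold then forbids critical points, and the Morse identity forces $g(\bd\Omega) = g(\bd\Omega^*) = -N_2 \le 0$, hence $g(\bd\Omega)=0$. The technically hardest step will be rigorously establishing the scale-invariant lower bound on the contribution of a single critical point; a direct local expansion of $u^*$ around a non-degenerate critical point should suffice, and a compactness/contradiction argument on a sequence $\Omega_k$ with $Q(B)-Q(\Omega_k) \to 0$ and persistent nontrivial topology offers an alternative route.
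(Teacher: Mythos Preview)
Your approach diverges from the paper's after the common first step of normalizing so that $B(0,1)\in\mathcal A(\Omega)$ has minimal radius and $B(0,1)\subset\Omega\subset B(0,1+\eps)$. The paper never returns to the capacitary potential at all; instead it fixes $\eps=\tfrac{1}{10}$ and uses the medial-axis flow $\mathfrak C$ of Section~\ref{Section:Medial}. Minimality of $B(0,1)$ in $\mathcal A(\Omega)$ together with Proposition~\ref{angle} forces $\|\nabla(x)\|^2\ge\tfrac12$ for all $x\in\Omega\setminus B(0,\tfrac12)$, and the dichotomy in the proof of Theorem~\ref{main1} gives $\mathcal R(x)\le\tfrac{\eps}{2}$ on $\Omega\setminus B(0,1)$. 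A fixed time of $\mathfrak C$ then pushes all of $\Omega$ into $B(0,1)$, so $\Omega$ is contractible; half-lives/half-dies forces $\bd\Omega\cong S^2$ and Alexander's theorem finishes. No new analytic input is needed beyond what already went into Theorems~\ref{main1} and~\ref{main2}.

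Your Morse-theoretic route, by contrast, has a genuine gap at precisely the step you flag as hardest. The inequality
\[
\int_{\tau_\ast-\sigma}^{\tau_\ast+\sigma}\int_{\Sigma^*_\tau}\|\mathring A\|^2\,da\,d\tau \ge c_0
\]
with \emph{absolute} constants $\sigma,c_0$ is not available: near a harmonic saddle the traceless Hessian is otherwise unconstrained, and the $\tau$-width over which the quadratic model governs the level sets depends on third-order data of $u^*$, not just on universal geometry. Worse, nothing you have written bounds the critical values away from $\tau=0$; the Hopf lemma excludes critical points \emph{on} $\bd\Omega^*$ but not arbitrarily close to it, and since $b(0)=0$ the weight in Proposition~\ref{level-set-estimate} gives no control there. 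The compactness alternative also does not close: after normalization the $\Omega_k^*$ converge to the unit ball only in Hausdorff distance, which says nothing about $\bd\Omega_k^*$ and does not prevent critical points of $u_k^*$ from persisting in the collapsing shell. Finally, harmonic functions can have degenerate critical points, so even the identity $g(\bd\Omega^*)=N_1-N_2$ needs justification. The paper's flow-based argument sidesteps all of these issues.
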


\begin{proof}
    Choose $\eps = \frac 1 {10}$. Again, after translation and scaling, we can assume that $B(0,1)$ is a ball of minimal radius in $\mathcal A(\Omega)$. Then we have $B(0,1)\subset \Omega \subset B(0,1+\eps)$. By Case 1 of the proof of  Theorem \ref{main1}, we know that for every $x\in \Omega \setminus B(0,1)$ we have $\mathcal R(x) \le \frac{\eps}{2}$. Note also that we must have $\|\del(x)\|^2 \ge \frac 1 2$ for all $x\in \Omega\setminus B(0,\frac{1}{2})$. It follows that for all $x\in \Omega \setminus B(0,3/4)$ we have $\mathfrak C(1/4,x) \in B(0,1)$. Indeed, note that $\| \del \mathfrak C(t,x)\|^2 \ge \frac 1 2$ for $t\in [0,1/4]$ and therefore $\mathcal R(\mathfrak C(1/4,x)) \ge 1/8$ which forces $\mathfrak C(1/4,x)\in B(0,1)$.  We also have $\mathfrak C(1/4,x)\in B(0,1)$ for all  $x\in B(0,3/4)$. Therefore, there is a homotopy $H\colon \Omega\times [0,1]\to \Omega$ with $H(1,\Omega)\subset B(0,1)$. This implies that $\Omega$ is contractible (as a subset of itself). Therefore $\bd \Omega$ must be a sphere by the half lives, half dies theorem. It now follows that $\Omega$ is diffeomorphic to a ball since every smoothly embedded $S^2$ in $\R^3$ bounds a 3-ball. 
\end{proof}

To conclude this section, we note that Theorems \ref{main1}-\ref{main3} all have counterparts for the Sobolev quotient $Q(\Omega,\bd \Omega)$. The proofs are identical, once one replaces Proposition \ref{level-set-estimate} with Proposition \ref{sobolev-quotient-est}. Hence we have the following: 

\begin{theorem}
For every $\eps > 0$ there is a $\delta > 0$ so that the following is true: whenever $\Omega\subset \R^3$ is a smooth, bounded domain with connected boundary and $Q(B,\bd B) - Q(\Omega,\bd \Omega) < \delta$ then there exists a ball $B(x,r)$ such that $B(x,r) \subset \Omega \subset B(x,r(1+\eps))$.
\end{theorem}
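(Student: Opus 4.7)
The plan is to mirror the proof of Theorem \ref{main1} step by step, substituting Proposition \ref{sobolev-quotient-est} for Proposition \ref{level-set-estimate} at the one place where the latter enters. Since the Sobolev quotient is conformally invariant, inversion about a point of $\Omega$ identifies the interior problem with the exterior problem: one has $Q(\Omega,\bd\Omega) = Q^{\bd,*}(\Omega^*)$, and the rigidity inequality $Q^{\bd,*}(\Omega) \le Q^{\bd,*}(B)$ with equality only for balls follows from Escobar's work.

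First I would establish the preliminary stability result for exterior domains: for every $\eta > 0$ there is a $\delta > 0$ such that whenever $Q^{\bd,*}(B) - Q^{\bd,*}(\Omega) < \delta$, there is a ball $B(x,r) \supset \Omega$ with $\bd B(x,r)$ inside the $\eta r$-neighborhood of $\bd \Omega$. Proposition \ref{sobolev-quotient-est} bounds the weighted integral $\int_0^\infty b_2(\tau)\int_{\Sigma_\tau}\|\mathring A\|^2\,da\,d\tau$ in terms of $Q^{\bd,*}(B) - Q^{\bd,*}(\Omega)$, with $b_2(\tau) = (1-e^{-\tau})e^{-\tau}$ strictly positive on $(0,\infty)$. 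Averaging on an interval $[c(\eta),d(\eta)]$ bounded away from $0$ and $\infty$ yields a level set $\Sigma_s = \{u = s\}$ with $s$ close to $1$ and $\int_{\Sigma_s}\|\mathring A\|^2\,da$ arbitrarily small. The De Lellis--M\"uller almost-umbilic estimate then shows $\Sigma_s$ is $W^{2,2}$-close to a round sphere $\bd B(x,R)$, and the capacity comparison together with Proposition \ref{capacity-est} closes the preliminary stability step exactly as in Proposition \ref{prelim-stability1}.

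Next I would conformally invert about a point of $\Omega$ to transfer this preliminary stability to interior domains, obtaining the Sobolev analog of Proposition \ref{A-structure1}: when $Q(B,\bd B) - Q(\Omega,\bd\Omega) < \delta$, every $B(x,r) \in \mathcal A(\Omega)$ has $\bd B(x,r)$ contained in the $\eta r$-neighborhood of $\bd\Omega$. The medial-axis argument that selects a ball in $\mathcal A(\Omega)$ of minimal radius and uses the Lieutier flow $\mathfrak C$ to rule out points of $\Omega$ far from it is purely geometric and transfers without change.

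The main obstacle is the first step: since $b_2(0) = 0$, the raw averaging argument cannot produce a level set arbitrarily close to $\bd\Omega$ itself. This is not serious, however, because we only need \emph{some} level set near $\bd\Omega$ where the total umbilic defect is small, and working on an interval $[c(\eta),d(\eta)]$ bounded away from zero suffices; the capacity comparison then absorbs the resulting discrepancy between $\Omega$ and $\Omega_s$, just as in the Yamabe case.
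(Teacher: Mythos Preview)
Your proposal is correct and is exactly the approach the paper takes: the paper simply states that the proofs of Theorems \ref{main1}--\ref{main3} go through verbatim for the Sobolev quotient once Proposition \ref{level-set-estimate} is replaced by Proposition \ref{sobolev-quotient-est}, and your outline faithfully reproduces that substitution (including the observation that $b_2(\tau)>0$ for $\tau>0$, which is all the averaging argument needs).
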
 

\begin{theorem} 
For every $\eps > 0$ there is a $\delta > 0$ so that the following is true: whenever $\Omega\subset \R^3$ is a smooth, bounded domain with connected boundary and $Q(B,\bd B) - Q(\Omega,\bd \Omega) < \delta$ then, after suitable scaling of $\Omega$, the Gromov-Hausdorff distance between the unit ball and $\Omega$ equipped with its induced length metric is at most $\eps$. 
\end{theorem}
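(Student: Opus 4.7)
The plan is to imitate the proof of Theorem \ref{main2} line for line, substituting Proposition \ref{sobolev-quotient-est} in place of Proposition \ref{level-set-estimate} wherever the latter enters. The only analytic input distinguishing the two settings is the integral bound on $\int_{\Sigma_\tau}\|\mathring A\|^2\,da$ in terms of the quotient deficit; Proposition \ref{sobolev-quotient-est} supplies exactly such a bound, with weight $b_2(\tau)=(1-e^{-\tau})e^{-\tau}$ in place of the weight $b(\tau)$. Everything downstream is geometric or capacity-theoretic and does not see which quotient we started with.

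First, given $\eta > 0$, I would extract a level set $\Gamma_s=\{u=s\}$ of the capacitary potential with $1-\eta<s<1$ and $\int_{\Gamma_s}\|\mathring A\|^2\,da\le \eta$, assuming $Q(B,\bd B)-Q(\Omega,\bd\Omega)$ is sufficiently small. This follows from a mean-value argument identical to the one preceding Proposition \ref{prelim-stability1}, since $b_2$ is strictly positive and bounded below on any compact subinterval of $(0,\infty)$ away from the endpoints (the only potential sticking point, which is immediate from the explicit formula). Then the De Lellis--M\"uller almost-umbilic estimate realizes $\Gamma_s$ as a $W^{2,2}$-small perturbation of a round sphere $\bd B(x,R)$, and combining the capacitary identity $\cp(\Omega)/\cp(\Omega_s)=1/s$ with Proposition \ref{capacity-est} yields the Sobolev-quotient analogue of Proposition \ref{prelim-stability1}: there is a ball $B(x,r)$ with $\Omega\subset B(x,r)$ and $\bd B(x,r)$ in the $\eps r$-neighborhood of $\bd\Omega$.

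Next, because the Sobolev quotient is also conformally invariant, the conformal inversion trick used in Proposition \ref{A-structure1} applies verbatim and produces the corresponding structural statement: once $Q(B,\bd B)-Q(\Omega,\bd\Omega)<\delta$, every ball $B(x,r)\in\mathcal A(\Omega)$ has $\bd B(x,r)$ contained in the $\eps r$-neighborhood of $\bd\Omega$. At this point all remaining ingredients are purely geometric: I would choose a ball $B\in\mathcal A(\Omega)$ of minimal radius, normalize by translation and scaling so that $B=B(0,1)$, and rerun Cases~1 and~2 from the proof of Theorem \ref{main1}. These cases invoke only the structural proposition together with the Lieutier flow $\mathfrak C$ and Proposition \ref{angle}, none of which depend on the specific conformal invariant. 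The output is $B(0,1)\subset\Omega\subset B(0,1+\eps)$ together with the stronger property that every $x\in\Omega\setminus B(0,1)$ admits a rectifiable path in $\Omega$ of length at most $\eps$ returning to $B(0,1)$.

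Finally, I would assemble the Gromov--Hausdorff approximation exactly as in the proof of Theorem \ref{main2}: the inclusion $\iota\colon B\hookrightarrow\Omega$ is an isometric embedding with $\eps$-dense image in $(\Omega,d)$ thanks to the short-path property, while the retraction $f\colon\Omega\to B$ that fixes $B$ and sends each $x\in\Omega\setminus B$ to the endpoint in $B$ of a short path from $x$ satisfies $|d_{\mathrm{euc}}(f(x),f(y))-d(x,y)|\le 2\eps$. This gives Gromov--Hausdorff distance at most $2\eps$, which yields the theorem after relabeling $\eps$. I do not anticipate any genuinely new obstacle; the only verification beyond "substitute $b_2$ for $b$" is the positivity of $b_2$ on compact subintervals, and the entire medial-axis and flow argument then carries over unchanged.
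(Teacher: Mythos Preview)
Your proposal is correct and matches the paper's own treatment exactly: the paper states that the proofs of the Sobolev-quotient versions of Theorems \ref{main1}--\ref{main3} are identical to those for $Q$, the sole change being the replacement of Proposition \ref{level-set-estimate} by Proposition \ref{sobolev-quotient-est}. Your outline reproduces precisely this substitution and correctly observes that the only new check is the positivity of $b_2(\tau)=(1-e^{-\tau})e^{-\tau}$ on compact subintervals of $(0,\infty)$, after which the capacity, De Lellis--M\"uller, conformal-inversion, medial-axis, and Gromov--Hausdorff steps carry over unchanged.
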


\begin{theorem}
    There is a $\delta > 0$ so that if $\Omega\subset \R^3$ is a smooth, bounded domain with connected boundary and $Q(B,\bd B) - Q(\Omega,\bd \Omega) < \delta$ then $\Omega$ is diffeomorphic to a ball. 
\end{theorem}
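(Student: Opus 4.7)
The plan is to follow the proof of Theorem \ref{main3} verbatim, using the Sobolev quotient $Q(\Omega,\bd\Omega)$ in place of the Yamabe quotient and substituting Proposition \ref{sobolev-quotient-est} for Proposition \ref{level-set-estimate} at the single point where the latter is invoked. The entire architecture, preliminary exterior stability, followed by the structural result for balls in $\mathcal A(\Omega)$, followed by the flow argument, only depends on the Yamabe quotient through this one integral bound on $\|\mathring A\|^2$, so the substitution is essentially cosmetic.

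First I would prove the Sobolev analog of Proposition \ref{prelim-stability1}: whenever $Q^{\bd,*}(B)-Q^{\bd,*}(\Omega)<\delta$, there exists $B(x,r)$ with $\Omega\subset B(x,r)$ and $\bd B(x,r)$ contained in the $\eps r$-neighborhood of $\bd\Omega$. The argument is the same integration-over-$\tau$ followed by mean-value argument used in the proof of Proposition \ref{prelim-stability1}: Proposition \ref{sobolev-quotient-est} gives
\[
\int_0^\infty b_2(\tau)\int_{\Sigma_\tau}\|\mathring A\|^2\,da\,d\tau \le C\,\bigl(Q^{\bd,*}(B)-Q^{\bd,*}(\Omega)\bigr),
\]
and since $b_2(\tau)=(1-e^{-\tau})e^{-\tau}$ is strictly positive on $(0,\infty)$, the minimum of $b_2$ on any compact sub-interval is positive and one extracts a level set $\Sigma_s$ with $s$ close to $1$ on which $\int_{\Sigma_s}\|\mathring A\|^2\,da$ is small. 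The De Lellis--M\"uller estimate and the capacity comparison of Proposition \ref{capacity-est} then apply unchanged.

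Next I would establish the Sobolev analog of Proposition \ref{A-structure1}: if $Q(B,\bd B)-Q(\Omega,\bd\Omega)<\delta$ and $B(x,r)\in\mathcal A(\Omega)$, then $\bd B(x,r)$ lies in the $\eps r$-neighborhood of $\bd\Omega$. This is immediate from the preceding step via the conformal inversion $\phi(y)=y/|y|^2$ centered at $x$, exactly as in the Yamabe case, since the Sobolev quotient is also conformally invariant and the argument comparing $\bd B^*$ to the unit sphere via the $45^\circ$ angle condition on $\mathcal A(\Omega)$ uses nothing beyond elementary geometry.

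Finally I would run the proof of Theorem \ref{main3} verbatim: choose $\eps=\tfrac{1}{10}$, apply the structural result to a ball of minimal radius in $\mathcal A(\Omega)$ (normalized to $B(0,1)$) to conclude $B(0,1)\subset\Omega\subset B(0,1+\eps)$; then the Lieutier flow $\mathfrak C$ produces a homotopy $H\colon\Omega\times[0,1]\to\Omega$ with $H(\cdot,1)(\Omega)\subset B(0,1)$, showing $\Omega$ is contractible as a subset of itself. The half-lives-half-dies theorem forces $\bd\Omega\cong S^2$, and the Schoenflies/Alexander theorem in $\R^3$ yields that $\Omega$ is diffeomorphic to a ball. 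The only step that requires any thought is checking that the explicit weight $b_2$ is bounded below on intervals of the form $[c(\eta),d(\eta)]$ used in the adaptation of the proof of the intermediate mean-value lemma, but this is a one-line computation and poses no real obstacle, since the original Yamabe proof is structurally identical.
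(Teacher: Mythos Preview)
Your proposal is correct and matches the paper's own approach exactly: the paper simply asserts that the proofs of Theorems \ref{main1}--\ref{main3} carry over verbatim to the Sobolev quotient setting once Proposition \ref{level-set-estimate} is replaced by Proposition \ref{sobolev-quotient-est}, and you have spelled out precisely how that substitution propagates through the chain of lemmas.
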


\section{Comparison with the Coefficient of Quasi-conformality}

\label{Section:Comparison}

In this section, we give a qualitative comparison between the coefficient of quasi-conformality $K$ and the Yamabe quotient $Q$ in the regime where $K$ is close to 1 and $Q$ is close to $Q(B)$. We will show that three types of geometric features which prevent $K$ from being close to 1 also prevent $Q$ from being close to $Q(B)$.

\subsection{Spikes}
Fix an angle $0 < \alpha < \frac{\pi}{2}$ and then let $C$
be a convex circular cone with opening angle $\alpha$. Gehring and V\"ais\"al\"a \cite{gehring1965coefficients} proved that 
\[
K(C) \ge (1+\cos \alpha)^{1/6}.
\]
Note that as $\alpha \to \frac{\pi}{2}$, the cone $C$ approaches a half-space and the lower bound degenerates to 1.   Gehring and V\"ais\"al\"a \cite{gehring1965coefficients} further showed that any domain containing a spike satisfies the same lower bound. 
More precisely, 
consider a domain $\Omega \subset \R^3$. Assume there is a neighborhood $U$ of a point $q\in \bd \Omega$ with $\Omega \cap U = C \cap U$ where $C$ is a convex circular cone with opening angle $\alpha$ with vertex at $q$. Then Gehring and V\"ais\"al\"a \cite{gehring1965coefficients} prove that $\Omega$ satisfies the same lower bound 
\begin{equation}
\label{gv1}
K(\Omega) \ge (1+\cos \alpha)^{1/6}.
\end{equation}
Thus spikes with small opening angle force $K(\Omega)$ to be significantly larger than 1. 

Our results show that a similar picture is true for $Q$. Indeed, note that the choice of $\frac{\pi}{4}$ as the separation angle in the definition of $\mathcal A(\Omega)$ was essentially arbitrary. All of the proofs still work replacing $\frac{\pi}{4}$ in the definition of $\mathcal A(\Omega)$ with any other fixed $\theta > 0$. In particular, we have the following variant of Proposition \ref{A-structure1}. 

\begin{prop}
\label{A-structure-2}
    For every $\eps > 0$ and every angle $\theta > 0$ there is a $\delta = \delta(\eps,\theta) > 0$ so that if $Q(B)-Q(\Omega) < \delta$ then the following property holds: whenever $B(x,r)\subset \Omega$ has two points $y,z \in \bd B(x,r) \cap \bd \Omega$ with $\angle yxz \ge \theta$, it follows that $\bd B(x,r)$ is contained in the $\eps r$-neighborhood of $\bd \Omega$. 
\end{prop}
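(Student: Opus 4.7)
The plan is to follow the proof of Proposition \ref{A-structure1} essentially unchanged, replacing the separation angle $\pi/4$ by a general $\theta > 0$ throughout, at the cost of letting $\delta$ depend on $\theta$.

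First I would reduce to the normalized setting. Given a ball $B(x,r) \subset \Omega$ with two points $y, z \in \bd B(x,r) \cap \bd \Omega$ satisfying $\angle yxz \geq \theta$, translation and scale invariance of $Q$ let me assume $B(x,r) = B(0,1)$. I then apply the conformal inversion $\phi(w) = w/|w|^2$ centered at the origin and let $\Omega^*$ be the interior of $\R^3 \setminus [\phi(\Omega \setminus \{0\})]$. Since $\phi$ fixes $\bd B(0,1)$ pointwise, the points $y, z$ lie in $\bd \Omega^* \cap \bd B(0,1)$ with the same angle between them. Conformal invariance gives $Q^*(B) - Q^*(\Omega^*) = Q(B) - Q(\Omega)$, so choosing $\delta$ small enough I can invoke Proposition \ref{prelim-stability1} to produce a ball $B^* = B(x^*, r^*)$ with $\Omega^* \subset B^*$ and $\bd B^*$ contained in the $\frac{\eps'}{100} r^*$-neighborhood of $\bd \Omega^*$, where $\eps' = \eps'(\eps, \theta)$ is small, to be chosen later.

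Next I would verify that $|r^* - 1|$ and $|x^*|$ are both small, quantitatively in terms of $\eps'$ and $\theta$. The upper bound on $r^*$ is identical to the original argument: since $\Omega^* \subset B(0,1)$, too large an $r^*$ forces a point of $\bd B^*$ to lie far from $\bd B(0,1)$, contradicting Hausdorff-closeness to $\bd \Omega^*$. For the lower bound I would run the same planar geometry argument as in the original proof, working in the plane through $0, y, z$: since $B^*$ contains two points on $\bd B(0,1)$ separated by angle at least $\theta$, its diameter is at least $2\sin(\theta/2)$, and if $r^*$ is appreciably less than $1$ then some point of $\bd B^*$ must lie at a distance from $\bd B(0,1)$ that is positive and controlled by a constant depending only on $\theta$. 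Combined with Hausdorff-closeness to $\bd \Omega^* \subset \overline{B(0,1)}$, this forces $r^* > 1 - C(\theta) \eps'$, and a symmetric consideration gives $|x^*| \leq C(\theta) \eps'$.

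The final step would be to undo the conformal inversion exactly as in the proof of Proposition \ref{A-structure1}. Given any $y \in \bd B(0,1)$, I would locate a nearby $y^* \in \bd B^*$, then a nearby $z^* \in \bd \Omega^*$, and estimate $|y - \phi(z^*)|$ using that $\phi$ is close to the identity on a neighborhood of the unit sphere; setting $z = \phi(z^*) \in \bd \Omega$ produces the required nearby boundary point. Choosing $\eps' = \eps'(\eps, \theta)$ sufficiently small yields $\bd B(0,1) \subset N_{\eps}(\bd \Omega)$. The main obstacle is simply tracking how the elementary geometric constants degenerate as $\theta \to 0$; but since $\theta > 0$ is fixed, all $\theta$-dependent constants remain positive and finite, so the only consequence is that $\delta$ must now depend on $\theta$ as well as on $\eps$. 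No new analytic input is needed beyond what was already used in Proposition \ref{A-structure1}.
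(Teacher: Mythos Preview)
Your proposal is correct and matches the paper's approach exactly: the paper does not give a separate proof of this proposition but simply remarks that the choice of $\pi/4$ in Proposition~\ref{A-structure1} was arbitrary and that all proofs go through with any fixed $\theta>0$, at the cost of $\delta$ depending on $\theta$. Your outline faithfully carries this out, correctly identifying that only the elementary-geometry step bounding $r^*$ from below and $|x^*|$ acquires $\theta$-dependent constants.
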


Here $\delta(\eps,\theta)$ will degenerate to 0 as $\theta \to 0$. Now consider a domain $\Omega \subset \R^3$ with a spike as above. Again this means that there is a neighborhood $U$ of a point $q\in \bd \Omega$ with $\Omega\cap U = C\cap U$ where $C$ is a convex circular cone with opening angle $\alpha$ with vertex at $q$.  Then $\Omega$ contains a ball $B(x,r)$ with two points $y,z\in \bd B(x,r)\cap \bd \Omega$ satisfying $\angle yxz \ge \theta$ where $\theta = \theta(\alpha) \to 0$ as $\alpha \to \frac{\pi}{2}$. Moreover, by choosing the center of the ball close enough to the vertex of the spike, we can ensure that $\bd B(x,r)$ is not contained in the $\frac{r}{10}$ neighborhood of $\bd \Omega$. Hence we obtain \[
Q(B) - Q(\Omega) \ge \delta\left(\frac{1}{10},\theta(\alpha)\right),
\]   
which is analogous to (\ref{gv1}).  (Strictly speaking, $Q$ is only defined for smooth domains. So to be precise, we should really replace $\Omega$ by a smoothed out version of $\Omega$ here. Assuming the smoothing is done in a very small neighborhood of the vertex, this does not affect the above argument.)

\subsection{Ridges} Fix an angle $0 < \beta < \pi$ and let $D\subset \R^3$ be a dihedral wedge of angle $\beta$. Then Gehring and V\"ais\"al\"a \cite{gehring1965coefficients} prove that 
\[
K(D) \ge \left(\frac{\pi}{\beta}\right)^{1/2}. 
\]
Note that as $\beta\to \pi$ the wedge $D$ approaches a half-space and this lower bound degenerates to 1. Gehring and V\"ais\"al\"a further show that any domain containing a ridge modeled on $D$ satisfies the same lower bound. More precisely, suppose $\Omega\subset \R^3$ is a domain. Assume there is a neighborhood $U$ of a point $q\in \bd \Omega$ with $\Omega\cap U = D\cap U$ where $D$ is a dihedral wedge of angle $\beta$ whose edge passes through $q$. Then $\Omega$ satisfies the same lower bound 
\begin{equation}
    \label{gv2}
    K(\Omega) \ge \left(\frac \pi \beta\right)^{1/2}. 
\end{equation}
Thus ridges with small opening angle force $K(\Omega)$ to be much larger than 1. 

Our results show a similar behavior for $Q$. Indeed, consider a domain $\Omega\subset \R^3$ with a ridge as above. Then, as in the spike case, $\Omega$ contains a ball $B(x,r)$ with two points $y,z\in \bd B(x,r)\cap \bd \Omega$ satisfying $\angle yxz \ge \Theta$ where $\Theta = \Theta(\beta)\to 0$ as $\beta \to \pi$. By choosing the center point $x$ close enough to the edge of the ridge, we can ensure that $\bd B(x,r)$ is not contained in the $\frac r {10}$ neighborhood of $\bd \Omega$.  Therefore we obtain a bound 
\[
Q(B) - Q(\Omega) \ge \delta\left(\frac{1}{10}, \Theta(\beta)\right),
\]
where $\delta$ is as in Proposition \ref{A-structure-2}. This bound is analogous to (\ref{gv2}) in the Yamabe quotient setting. (Again, to be precise, we should really take a smoothing of $\Omega$ here. If the smoothing is done in a small neighborhood of the edge of the ridge, the above argument carries through unchanged.)

\subsection{Hair}

Let $\Omega \subset \R^3$ be a domain and assume there are concentric balls $B(x,a) \subset B(x,b)$ such that $\Omega \cap B(x,b)$ has two distinct connected components that intersect $B(x,a)$. Then Gehring and V\"ais\"al\"a \cite{gehring1965coefficients} prove that 
\[
K(\Omega) \ge A \log\left(\frac{b}{a}\right)
\]
where $A \ge 0.129$ is an absolute constant. In particular, this implies that a domain $\Omega$ with $K(\Omega)$ close to 1 cannot contain a long, thin ``hair'' which runs parallel to the domain. 
\[
\includegraphics[width=2.5in]{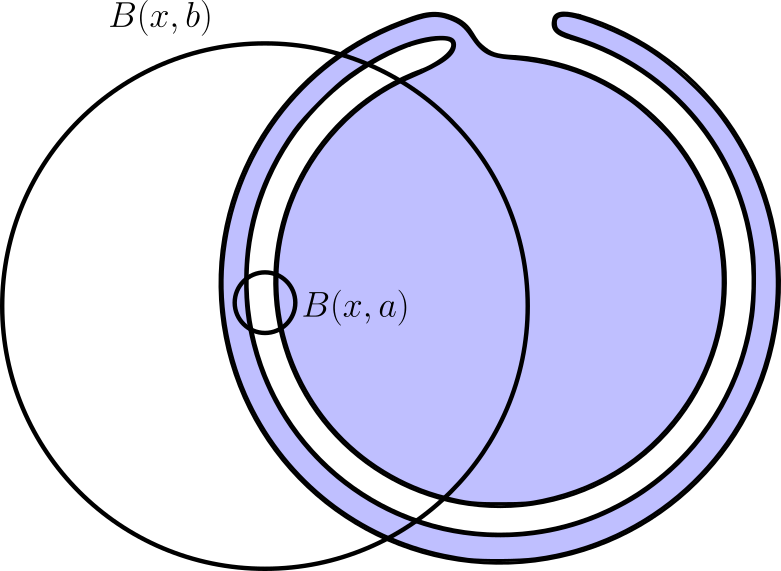}
\]
Again, our results show that a similar property is true for $Q$. Indeed, if $Q(\Omega)$ is close enough to $Q(B)$, then after scaling Theorem \ref{main1} implies that $B(0,1)\subset \Omega \subset B(0,1+\eps)$. Then Theorem \ref{main2} implies that $\Omega$ cannot contain any  ``hair'' in the region $B(0,1+\eps)\setminus B(0,1)$. Indeed, every point $x\in \Omega \setminus B(0,1)$ can be joined to a point of $B(0,1)$ by a path of length at most $\eps$  in $\Omega$. 

\bibliographystyle{plain}
\bibliography{bibliography.bib}

\begin{thebibliography}{10}

\bibitem{agostiniani2020sharp}
Virginia Agostiniani, Mattia Fogagnolo, and Lorenzo Mazzieri.
\newblock Sharp geometric inequalities for closed hypersurfaces in manifolds with nonnegative ricci curvature.
\newblock {\em Inventiones mathematicae}, 222(3):1033--1101, 2020.

\bibitem{agostiniani2022riemannian}
Virginia Agostiniani, Carlo Mantegazza, Lorenzo Mazzieri, and Francesca Oronzio.
\newblock Riemannian penrose inequality via nonlinear potential theory.
\newblock {\em arXiv preprint arXiv:2205.11642}, 2022.

\bibitem{agostiniani2020monotonicity}
Virginia Agostiniani and Lorenzo Mazzieri.
\newblock Monotonicity formulas in potential theory.
\newblock {\em Calculus of Variations and Partial Differential Equations}, 59(1):6, 2020.

\bibitem{agostiniani2024green}
Virginia Agostiniani, Lorenzo Mazzieri, and Francesca Oronzio.
\newblock A green’s function proof of the positive mass theorem.
\newblock {\em Communications in Mathematical Physics}, 405(2):54, 2024.

\bibitem{allen2023stability}
Brian Allen, Edward Bryden, and Demetre Kazaras.
\newblock On the stability of llarull's theorem in dimension three.
\newblock {\em arXiv preprint arXiv:2305.18567}, 2023.

\bibitem{aubin1976equations}
Thierry Aubin.
\newblock {\'E}quations diff{\'e}rentielles non lin{\'e}aires et probleme de yamabe concernant la courbure scalaire.
\newblock {\em J. Math. Pures Appl.(9)}, 55:269--296, 1976.

\bibitem{benatti2021minkowski}
Luca Benatti, Mattia Fogagnolo, and Lorenzo Mazzieri.
\newblock Minkowski inequality on complete riemannian manifolds with nonnegative ricci curvature.
\newblock {\em arXiv preprint arXiv:2101.06063}, 2021.

\bibitem{blum1967transformation}
Harry Blum.
\newblock A transformation for extracting new descriptions of shape.
\newblock {\em Models for the perception of speech and visual form}, pages 362--380, 1967.

\bibitem{bray2022harmonic}
Hubert~L Bray, Demetre~P Kazaras, Marcus~A Khuri, and Daniel~L Stern.
\newblock Harmonic functions and the mass of 3-dimensional asymptotically flat riemannian manifolds.
\newblock {\em The Journal of Geometric Analysis}, 32(6):184, 2022.

\bibitem{chan2024monotonicity}
Pak-Yeung Chan, Jianchun Chu, Man-Chun Lee, and Tin-Yau Tsang.
\newblock Monotonicity of the p-green functions.
\newblock {\em International Mathematics Research Notices}, 2024(9):7998--8025, 2024.

\bibitem{cherrier1984problemes}
Pascal Cherrier.
\newblock Problemes de neumann non lin{\'e}aires sur les vari{\'e}t{\'e}s riemanniennes.
\newblock {\em Journal of Functional Analysis}, 57(2):154--206, 1984.

\bibitem{colding2012new}
Tobias~Holck Colding.
\newblock New monotonicity formulas for ricci curvature and applications. i.
\newblock {\em Acta Math}, 209:229--263, 2012.

\bibitem{colding2014ricci}
Tobias~Holck Colding and William~P Minicozzi.
\newblock Ricci curvature and monotonicity for harmonic functions.
\newblock {\em Calculus of Variations and Partial Differential Equations}, 49:1045--1059, 2014.

\bibitem{colding2021singularities}
Tobias~Holck Colding and William~P Minicozzi~II.
\newblock Singularities of ricci flow and diffeomorphisms.
\newblock {\em arXiv preprint arXiv:2109.06240}, 2021.

\bibitem{de2005optimal}
Camillo De~Lellis and Stefan M{\"u}ller.
\newblock Optimal rigidity estimates for nearly umbilical surfaces.
\newblock {\em Journal of Differential Geometry}, 69(1):075--110, 2005.

\bibitem{dong2024some}
Conghan Dong.
\newblock Some stability results of positive mass theorem for uniformly asymptotically flat 3-manifolds.
\newblock {\em Annales math{\'e}matiques du Qu{\'e}bec}, pages 1--25, 2024.

\bibitem{dong2024stability}
Conghan Dong.
\newblock Stability for the 3d riemannian penrose inequality.
\newblock {\em arXiv preprint arXiv:2402.10299}, 2024.

\bibitem{dong2023stability}
Conghan Dong and Antoine Song.
\newblock Stability of euclidean 3-space for the positive mass theorem.
\newblock {\em arXiv preprint arXiv:2302.07414}, 2023.

\bibitem{escobar1992conformal}
Jos{\'e}~F Escobar.
\newblock Conformal deformation of a riemannian metric to a scalar flat metric with constant mean curvature on the boundary.
\newblock {\em Annals of Mathematics}, 136(1):1--50, 1992.

\bibitem{escobar1992yamabe}
Jos{\'e}~F Escobar.
\newblock The yamabe problem on manifolds with boundary.
\newblock {\em Journal of Differential Geometry}, 35(1):21--84, 1992.

\bibitem{gehring1962rings}
Frederick~W Gehring.
\newblock Rings and quasiconformal mappings in space.
\newblock {\em Transactions of the American Mathematical Society}, 103(3):353--393, 1962.

\bibitem{gehring1965coefficients}
FW~Gehring and J~V{\"a}is{\"a}l{\"a}.
\newblock The coefficients of quasiconformality of domains in space.
\newblock {\em Acta Mathematica}, 114(1):1--70, 1965.

\bibitem{hirsch2024monotone}
Sven Hirsch, Pengzi Miao, and Luen-Fai Tam.
\newblock Monotone quantities of $ p $-harmonic functions and their applications.
\newblock {\em Pure and Applied Mathematics Quarterly}, 20(2):599--644, 2024.

\bibitem{hirsch2024stability}
Sven Hirsch and Yiyue Zhang.
\newblock Stability of llarull's theorem in all dimensions.
\newblock {\em Advances in Mathematics}, 458:109980, 2024.

\bibitem{huisken2001inverse}
Gerhard Huisken and Tom Ilmanen.
\newblock The inverse mean curvature flow and the riemannian penrose inequality.
\newblock {\em Journal of Differential Geometry}, 59(3):353--437, 2001.

\bibitem{jezierski2005unconstrained}
Jacek Jezierski and Jerzy Kijowski.
\newblock Unconstrained degrees of freedom for gravitational waves, beta-foliations and spherically symmetric initial data.
\newblock {\em arXiv preprint gr-qc/0501073}, 2005.

\bibitem{kazaras2021stability}
Demetre Kazaras, Marcus Khuri, and Dan Lee.
\newblock Stability of the positive mass theorem under ricci curvature lower bounds.
\newblock {\em arXiv preprint arXiv:2111.05202}, 2021.

\bibitem{kijowski1984unconstrained}
J~Kijowski.
\newblock Unconstrained degrees of freedom of gravitational field and the positivity of gravitational energy.
\newblock {\em Gravitation, Geometry and Relativistic Physics}, 212:40, 1984.

\bibitem{lieutier2004any}
Andr{\'e} Lieutier.
\newblock Any open bounded subset of $\mathbb{R}^n$ has the same homotopy type as its medial axis.
\newblock {\em Computer-Aided Design}, 36(11):1029--1046, 2004.

\bibitem{mazurowski2023monotone}
Liam Mazurowski and Xuan Yao.
\newblock Monotone quantities for $ p $-harmonic functions and the sharp $ p $-penrose inequality.
\newblock {\em arXiv preprint arXiv:2305.19784}, 2023.

\bibitem{mazurowski2023yamabe}
Liam Mazurowski and Xuan Yao.
\newblock The yamabe invariant of $\mathbb{RP}^3$ via harmonic functions.
\newblock {\em arXiv preprint arXiv:2305.00854}, 2023.

\bibitem{mazurowski2024mass}
Liam Mazurowski and Xuan Yao.
\newblock Mass, conformal capacity, and the volumetric penrose inequality.
\newblock {\em arXiv preprint arXiv:2410.09626}, 2024.

\bibitem{mazurowski2024stability}
Liam Mazurowski and Xuan Yao.
\newblock On the stability of the yamabe invariant of $s^3$.
\newblock {\em arXiv preprint arXiv:2402.00815}, 2024.

\bibitem{miao2023mass}
Pengzi Miao.
\newblock Mass, capacitary functions, and the mass-to-capacity ratio.
\newblock {\em Peking Mathematical Journal}, pages 1--54, 2023.

\bibitem{munteanu2021comparison}
Ovidiu Munteanu and Jiaping Wang.
\newblock Comparison theorems for three-dimensional manifolds with scalar curvature bound.
\newblock {\em arXiv preprint arXiv:2105.12103}, 2021.

\bibitem{oronzio2022adm}
Francesca Oronzio.
\newblock Adm mass, area and capacity in asymptotically flat $3 $-manifolds with nonnegative scalar curvature.
\newblock {\em arXiv preprint arXiv:2208.06688}, 2022.

\bibitem{schoen1984conformal}
Richard Schoen.
\newblock Conformal deformation of a riemannian metric to constant scalar curvature.
\newblock {\em Journal of Differential Geometry}, 20(2):479--495, 1984.

\bibitem{schoen1979proof}
Richard Schoen and Shing-Tung Yau.
\newblock On the proof of the positive mass conjecture in general relativity.
\newblock {\em Communications in Mathematical Physics}, 65:45--76, 1979.

\bibitem{schwartz2009monotonicity}
Fernando Schwartz.
\newblock Monotonicity of the yamabe invariant under connect sum over the boundary.
\newblock {\em Annals of Global Analysis and Geometry}, 35:115--131, 2009.

\bibitem{stern2022scalar}
Daniel~L Stern.
\newblock Scalar curvature and harmonic maps to $s^1$.
\newblock {\em Journal of Differential Geometry}, 122(2):259--269, 2022.

\bibitem{trudinger1968remarks}
Neil~S Trudinger.
\newblock Remarks concerning the conformal deformation of riemannian structures on compact manifolds.
\newblock {\em Annali della Scuola Normale Superiore di Pisa-Classe di Scienze}, 22(2):265--274, 1968.

\bibitem{witten1981new}
Edward Witten.
\newblock A new proof of the positive energy theorem.
\newblock {\em Communications in Mathematical Physics}, 80(3):381--402, 1981.

\bibitem{xia2024new}
Chao Xia, Jiabin Yin, and Xingjian Zhou.
\newblock New monotonicity for p-capacitary functions in 3-manifolds with nonnegative scalar curvature.
\newblock {\em Advances in Mathematics}, 440:109526, 2024.

\bibitem{yamabe1960deformation}
Hidehiko Yamabe.
\newblock On a deformation of riemannian structures on compact manifolds.
\newblock {\em Osaka Mathematical Journal}, 12(1):21--37, 1960.

\end{thebibliography}

\end{document}